\pgfplotsset{width=7cm, compat=1.10}
\theoremstyle{definition}
\newtheorem{theo}{Theorem}[subsection]
\newtheorem{theore}{Theorem}[section]
\newtheorem{pr}[theo]{Proposition}
 \newtheorem{lem}[theo]{Lemma}
 \newtheorem{coro}[theo]{Corollary}
\theoremstyle{remark}
\newtheorem{rema}[theo]{Remark}
\newtheorem{rrema}[theore]{Remark}
\theoremstyle{definition}
\newtheorem{defi}[theo]{Definition}
\newtheorem{prop}[theore]{Proposition}
\numberwithin{equation}{subsection}
\newcommand\cu{\underline{C}}
\newcommand\du{\underline{D}}
\newcommand\eu{\underline{E}}
\newcommand\au{\underline{A}}
\newcommand\bu{\underline{B}}
\newcommand\gal{\operatorname{Gal}}
\newcommand\chowe{\underline{Chow}^{\mathrm{eff}}}
\newcommand\chow{\operatorname{Chow}}
\newcommand\cho{\operatorname{Chow}}
\newcommand\chowm{\mathfrak{Chow}}
\newcommand\dmge{DM^{\mathrm{eff}}_{gm}{}}
\newcommand\dmgm{DM_{gm}}
\newcommand\q{{\mathbb{Q}}}
\newcommand\znz{\z/{\ell}^n\z}
\newcommand\obj{\operatorname{Obj}}
\newcommand\hw{{\underline{Hw}}}
\newcommand\hd{{\underline{HD}}}
\newcommand\he{{\underline{HE}}}
\newcommand\ab{{Ab}}
\newcommand\ql{{\mathbb{Q}_{\ell}}}
\newcommand\zop{{\mathbb{Z}[\frac{1}{p}]}}
\newcommand\hetl{H_{et}}
\newcommand\hsing{H_{sing}}
\newcommand\hdetl{H^{et}}
\newcommand\hdsing{H^{sing}}
\newcommand\hdsingr{H_{\q}^{sing}}
\newcommand\z{{\mathbb{Z}}}
\newcommand\zl{{\mathbb{Z}_{\ell}}}
\newcommand\ff{\mathbb{F}}
\newcommand\fq{{\mathbb{F}_q}}
\newcommand\tm{\tilde{M}}
\newcommand\ii{\mathcal{I}}
\newcommand\wcho{w_{\chow}}
\newcommand\wchow{w_{\chow}{}}
\newcommand\wed{{W_D}}
\newcommand\grwc{Gr_{W}}
\newcommand\grwd{Gr^{\wed}}
\newcommand\ns{\{0\}}
\newcommand\mgcq{M_{gm}^{c,\q}}
\newcommand\mgczop{M_{gm}^{c,\zop}}
\newcommand\mgcr{M_{gm}^{c,R}}
\newcommand\mgc{M_{gm}^{c}}
\newcommand\mg{M_{gm}{}}
\newcommand\mgq{M_{gm}^\q{}}
\newcommand\spe{\operatorname{Spec}}
\newcommand\com{\mathbb{C}}
\newcommand\p{\mathbb{P}}
\newcommand\id{\operatorname{id}}
 \newcommand\lan{\langle}
\newcommand\ra{\rangle}
\newcommand\bl{\bigl(} \newcommand\br{\bigl)}
\newcommand\var{\operatorname{Var}}
\newcommand\schpr{\operatorname{SchPr}}
\newcommand\sv{\operatorname{SmVar}}
\newcommand\spv{\operatorname{SmPrVar}}
\newcommand\zlz{\z/{\ell}\z}
\newcommand\vect{\operatorname{Vect}}
\newcommand\vecto{\operatorname{vect}}
\DeclareMathOperator\kar{\operatorname{Kar}}
 \DeclareMathOperator\ke{\operatorname{Ker}}
 \DeclareMathOperator\cok{\operatorname{Coker}}
\DeclareMathOperator\imm{\operatorname{Im}}
\DeclareMathOperator\co{\operatorname{Cone}}
\DeclareMathOperator\prli{\varprojlim}
\DeclareMathOperator\inli{\varinjlim}
\newcommand\modc{\operatorname{Mod_{cont}}}
\newcommand\hu{\underline{H}}
\newcommand\gdr{\mathfrak{D}^R}
\newcommand\dmerb{DM^{eff}_{R}}
\newcommand\dmep{DM^{eff}_{\zop}}
\newcommand\dm{DM}
\newcommand\dmgr{DM^R_{gm}{}}
\newcommand\dmgrco{DM^{R,c-1}_{gm}{}}
\newcommand\dmgrr{DM^{R,r}_{gm}{}}
\newcommand\dmgrj{DM^{R,j}_{gm}{}}
\newcommand\dmgrp{DM^{R'}_{gm}{}}
\newcommand\dmgmr{DM^R_{gm}}
\newcommand\mgr{M^R_{gm}}
\newcommand\mgp{M^{\zop}_{gm}}
\newcommand\mgrc{M^{c,R}_{gm}}
\newcommand\chower{\underline{Chow}^{eff}_R}
\newcommand\chowr{\underline{Chow}_{R}}
\newcommand\chowgri{\operatorname{Chow}}
\newcommand\chw{\operatorname{CWH}}
\newcommand\cchw{\operatorname{CWC}}
\newcommand\wcr{t_R}
\newcommand\chwaa{CWH^*_*}
\newcommand\dmger{{DM^{\,\, {eff}}_{\scalebox{0.7}{gm,R}}}{}}
\newcommand\dmgerp{{DM^{\,\, {eff}}_{\scalebox{0.7}{gm,R}'}}{}}
\newcommand\dmer{DM_{-R}^{eff}{}}
\newcommand\dmeq{DM_{-\q}^{eff}{}}
\newcommand\dmgep
\newcommand\dmgepr 
\newcommand\dmgepq 
\newcommand\thomr{t_{hom}^R}
\newcommand\thomq{t_{hom}^\q}
\newcommand\thomp{t_{hom}^{\zop}}
\newcommand\choweq{\underline{Chow}^{eff}_\q}
\newcommand\dmgeq{DM^{eff}_{gm,\q}{}}
 \newcommand\dmgmq{DM_{gm}^{\q}{}}
\newcommand\lmb{\mathbb{L}}
\newcommand\perf{{}^{perf}}
\newcommand\mhse{MHS_{eff}}
\newcommand\phse{PHS_{eff}}
\newcommand\mhs{MHS}
\newcommand\tih{\tilde{H}}
\newcommand\tph{\tilde{\Phi}}
\newcommand\dmr{DM_R}
\newcommand\mk{M_{k}}
\newcommand\mkr{M_{k,R}} 
\newcommand \shtr{SH_{R}(k)}
\newcommand \shtrc{SH^c_{R}(k)}
\newcommand\lrsh{-\otimes_{\sht}R}
\newcommand\lrdm{-\otimes_{\dm}R}
\newcommand\dmb{DM}
\newcommand\tshr{t^{SH}_R}
\newcommand\tdmr{t^{\dm}_R}
\newcommand\sht{SH(k)}
\newcommand\afo{\mathbb{A}^1}
\newcommand\aff{\mathbb{A}}
\newcommand\pt{pt}
\newcommand\ob{^{-1}}
\newcommand\lam{\Lambda}
\newcommand\sher{SH_{R}^{eff}(k)}
\newcommand\sinft{\Sigma_T^{\infty}}
\begin{document}

\title
 {Detecting 
 effectivity of motives, their weights, connectivity,
  and  dimension via Chow-weight (co)homology: a "mixed motivic decomposition of the diagonal"}
\author{Mikhail V. Bondarko, Vladimir A. Sosnilo
   \thanks{ 
 The main results of the paper were  obtained under support of the Russian Science Foundation grant no. 16-11-10200.}}\maketitle
\begin{abstract}
We describe certain new {\it Chow-weight} (co)homology theories on the category $\dmger$ of effective Voevodsky motives ($R$ is the coefficient ring). These theories detect whether a motif $M\in \obj \dmger$ is {\it $r$-effective}  (i.e.,  belongs to the $r$th Tate twist $\dmger(r)$ of effective motives), bound the weights of $M$ (in the sense of the Chow weight structure defined by the first author),  and detect the effectivity of "the lower weight pieces" of $M$. In particular, $M$ is $1$-effective 
if and only if a complex whose terms are certain Chow groups of zero-cycles is acyclic. 
Passing to the Poincar\'e duals one can also check whether an effective  motive $M$ belongs to the subcategory of $\dmger$ generated by motives of varieties of dimension at most $r$. Moreover, we calculate the connectivity of $M$ (in the sense of Voevodsky's homotopy $t$-structure, i.e., we study motivic homology) and prove that the exponents of  the higher motivic homology groups (of an "integral" motif) are finite whenever these groups are torsion. 
We apply the latter statement to the study of higher Chow groups of arbitrary varieties. 
 
These motivic properties of $M$ have important consequences for its (co)homology; they are also related to the properties of a preimage of $M$ in $\sht$ (if a compact preimage exists). As a particular case we prove that if 
Chow groups of an arbitrary variety $X$ vanish up to dimension $r-1$ then  the  highest Deligne weight factors of the (singular or \'etale) cohomology of $X$ with compact support 
 are $r$-effective in the naturally defined sense. Moreover, the converse implication for the singular cohomology case of this statement is valid under certain motivic conjectures. Furthermore, we study the case where lower Chow groups of $X$ are finite-dimensional over $\q$ (in this case the corresponding weight factors are $r$-effective up to Tate summands).

Our results yield  vast generalizations of the so-called "decomposition of the diagonal" results, and we re-prove and extend some of earlier statements of this sort. 
\end{abstract}

\tableofcontents

\section*{Introduction}

The well-known technique of {\it decomposition of the diagonal} (cf. Remark \ref{rdd} below) was introduced by Bloch 
in \S1A of \cite{blect} (cf. also  \cite{blosri}; a rich collection of recent results related to this notion can be found in \cite{voibook}). Let us recall some easily formulated motivic 
results obtained  via this method 
 (and essentially established in \cite{vialmotab}). For simplicity, we will 
 state them for motives and Chow groups with rational coefficients over a universal domain $k$\footnote{See Definition \ref{dhcho}(\ref{idh5}) and Proposition \ref{ptestfi} below.}  (though certain generalizations of these results are also available).


\begin{prop}\label{prpr}
(i) Let $O$ be an effective Chow 
 motif over $k$. Then $O$ is {\it $r$-effective} (i.e., it can be presented as $O'\otimes \lmb^{\otimes r}$ for some $r>0$ and an effective $O'$, where $\lmb$ is the Lefschetz motif)  if and only if $\chowgri_j(O)=\ns$ for $0\le j<r$ (see 
Remark 
 3.8 of \cite{vialmotab}).

(ii) Let $h:N\to O$ be a morphism of effective Chow motives. Then $\chowgri_0(h)$ is surjective if and only if it "splits modulo $1$-effective motives", i.e., if it corresponds to a presentation of $O$ as a retract of $N\bigoplus (Q\otimes \lmb)$ for some effective motif $Q$ 
 (cf. Proposition 3.5 of ibid. 
 and Remark \ref{rdd} below). 

(iii)  
For $h:N\to O$ as above the homomorphisms $\chowgri_j(h)$ are surjective for all $j\ge 0$ if and only if $h$ is split surjective (this is  Theorem 3.18 of 
ibid.).
\end{prop}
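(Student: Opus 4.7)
The forward direction is immediate from functoriality: any splitting $s \colon M \to N$ of $h$ induces, for every $j \ge 0$, a splitting $\chowgri_j(s)$ of $\chowgri_j(h)$. So the content is in the converse.

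For the converse, my plan is to iterate part (ii). The surjectivity of $\chowgri_0(h)$ gives, via (ii), an identity $\id_M = h \circ s_1 + g_1 \circ t_1$ with the error term $\epsilon_1 := g_1 \circ t_1$ factoring through $P_1 \otimes \lmb$ for some effective Chow motif $P_1$. I would then show by induction on $c \ge 1$ that the full surjectivity hypothesis allows one to refine this to $\id_M = h \circ s_c + \epsilon_c$ in which $\epsilon_c$ factors through a $c$-effective Chow motif $R_c$; once $c$ is large enough, a vanishing argument forces $\epsilon_c = 0$ and hence $h \circ s_c = \id_M$.

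The inductive step is the main difficulty. Writing $\epsilon_c = g_c \circ t_c$ with $t_c \colon M \to R_c$, the plan is to apply a higher-codimension analogue of the diagonal-decomposition argument underlying (ii) to absorb $\epsilon_c$ into a further Tate twist, using the hypothesis that $\chowgri_c(h)$ is surjective; part (ii) as stated controls only codimension zero, so one must revisit its proof at codimension $c$ rather than applying it as a black box. For termination, one would use that $M$ is a retract of the Chow motif of some smooth projective variety $X$, so that after $c$ exceeds an appropriate bound built from $\dim X$ and the realizations of the intermediate $R_c$, morphisms of the relevant shape into $M$ must vanish by the codimension bound on Chow groups. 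Making this termination clean (in particular, obtaining a bound that does not blow up with $c$) is the most delicate part of the argument and the principal obstacle to the plan.
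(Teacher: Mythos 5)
The paper does not itself prove Proposition~\ref{prpr}: all three parts are quoted from \cite{vialmotab} (part~(iii) is Theorem~3.18 there), and the iterated decomposition of the diagonal you sketch is essentially Vial's original argument. What the paper proves is a generalization, by a genuinely different route through Chow weight structures, weight complexes, and the localizations $\dmgr^c$ (see Corollary~\ref{ccones} together with Remark~\ref{rcones}(2), which recovers part~(iii)); that machinery is what lets the statement be promoted from Chow motives to arbitrary objects of $\dmger$.

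The termination concern you raise is not a genuine obstacle, and it is worth being precise about why. The bound is simply $\dim X$, where $X$ is a smooth projective variety such that $M$ is a retract of $\mgr(X)$, and it does not involve the intermediate objects $R_c$ at all. The relevant vanishing is $\dmger(\mgr(X),N\lan c\ra)=\ns$ for every effective Chow motive $N$ as soon as $c>\dim X$: taking $N$ to be a retract of $\mgr(S)$, Poincar\'e duality and the Cancellation theorem identify $\dmger(\mgr(X),\mgr(S)\lan c\ra)$ with the Chow group of cycles of dimension $\dim X-c$ on $X\times S$, which vanishes once $\dim X-c<0$. (This is the degenerate case of Proposition~\ref{pcrulemma}(2), where the dimension bound $m-c$ on the factoring variety is negative.) So once your iteration has produced $\id_M=h\circ s_c+\epsilon_c$ with $\epsilon_c$ factoring through a $c$-effective Chow motive $R_c$ for some $c>\dim X$, the morphism $M\to R_c$ already vanishes, because it extends along the split injection $M\to\mgr(X)$, and hence $\epsilon_c=0$ automatically; the ``sizes'' of the intermediate $R_c$ never enter. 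The real work in your plan is therefore the inductive step at codimension $c$, where your instinct is correct that part~(ii) cannot be used as a black box: one needs the higher-codimension analogue of the diagonal decomposition, which is what Proposition~\ref{pcrulemma}(1,2,4) supplies and what the proof of Theorem~\ref{tmain}(3) iterates through weight truncations.
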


Certainly, the Poincar\'e duals to these results are also valid (cf. Remark 3.9 of ibid.).
In statements of this sort  one usually takes $O$ to be the 
 motif of a smooth projective $P/k$, whereas $N$ is obtained by
resolving singularities of a closed subvariety $P'$ of $P$ (cf. Lemma 3 of \cite{gorgul} and Proposition 3.5 of \cite{vialmotab}). In this case, if $\chowgri_j(h)$ is surjective for all $j<c$ 
  then the diagonal cycle $\Delta$ in $P\times P$ (given by the diagonal embedding   $P\to P\times P$) is rationally equivalent to the 
 sum of a cycle supported on 
 $P'\times P$ and  one supported on $P\times W$ for some closed $W\subset P$ of codimension at least $ r$; 
 see 
 Proposition \ref{psingle} below for more detail. 

\begin{rrema}\label{rdd}
The latter formulation is an example of the decomposition of the diagonal statements in their "ordinary" form.

One can usually reformulate these cycle-theoretic statements using  the following trivial observation: if $M$ is an object of an additive category $\bu$, $\id_M=f_1+f_2$  (for $f_1,f_2\in \bu(M,M)$), and $f_i$ factor through some objects $M_i$ of $\bu$ (for $i=1,2$), then $M$ is a retract of $M_1\bigoplus M_2$. In particular, if $\bu$ is 
idempotent complete (this is the case for all  "standard" motivic  categories) then $M$ is a direct summand of $M_1\bigoplus M_2$.
\end{rrema}

 One of the motivations for the results of this sort is that they reduce the study of various properties of $O$ to the study of "more simple motives" (i.e., of motives of varieties of smaller dimensions); cf. Theorem 1 of \cite{blosri} and \S3.1.2 of \cite{voibook}.
 Certainly, these statements  have nice (and natural) cohomological consequences;   
cf. Proposition 6.4 of \cite{paranconn}. In particular, if a motif $O$ is $r$-effective then its cohomology is also $r$-effective in a certain sense (cf. Remark \ref{rdetect}(\ref{ideg}) and the proof of Proposition \ref{pesn}(1) below). 

In the current paper we establish a collection of generalizations of the aforementioned decomposition of the diagonal statements to objects of   Voevodsky's 
category $\dmger$ of $R$-linear effective geometric motives (here we assume the characteristic of the base field to be invertible in  the coefficient ring $R$ whenever it is positive); in particular, we consider motives with compact support of arbitrary varieties (that correspond to cohomology with compact support). 
 Our result enable the calculation of four important invariants of motives: their effectivity, connectivity, weights, and dimensions. 
 
Let us recall that the category $\chower$ of  ($R$-linear) effective Chow motives naturally embeds into $\dmger$. Thus we need certain extensions of the Chow group functors from $\chower $ to $\dmger$. Now,  the "most important"  extensions of 
 this sort are the {\it motivic homology} functors corepresented by $\lmb^{\otimes j}$ (where $j$ is a non-negative integer).\footnote{Recall that  $\lmb^{\otimes j}$ would be denoted by $R(j)[2j]$ in Voevodsky's convention, whereas below we will use the notation $R\lan j\ra$ for it.}
 Yet 
these homology theories cannot be used to formulate the 
 effectivity criteria in question; in particular, we have $\chowgri_0(\lmb,R,1)=\dmger(R,R(1)[1])\cong k^*\otimes_\z R\neq \ns$ (if $R$ is not a torsion ring). 
So it was a surprise for the authors to discover certain easily defined homology theories that allow for a rich family of  extensions of Proposition \ref{prpr}.
We call these functors {\it Chow-weight homology}; 
 they are closely related to motivic homology, 
but they are somewhat easier to compute. They are defined as certain "minimal" ({\it pure}) extensions of Chow functors from $\chower$ to $\dmger$, and constructed by means of a general method described in \cite{bwcomp}; see Remark \ref{rpure} below.  

As a simple particular case of our results we obtain the following generalization of Proposition \ref{prpr}(ii, iii):
in the setting of Proposition \ref{prpr}(ii)  (respectively, $k$ is a universal domain) 
 a cone of $h$ is $r$-effective (i.e., belongs to $\obj \dmgeq(k)\otimes \mathbb{L}^{\otimes r}$;  this is equivalent to the two-term  complex $N\to O$ to be homotopy equivalent to the cone of a morphism of $r$-effective Chow motives) if and only if the homomorphisms $\chowgri_j(h,\q)$ are bijective for all $j<r$.\footnote{For  general $k$ and $R$ one has to compute $\chowgri_*(-,R)$ at arbitrary function fields over $k$ in this criterion.}\ This is also equivalent to the existence of a morphism $h':O\to N$ that is "inverse to $h$ modulo cycles supported in codimension $r$" (see Remark \ref{rcones} below for more detail).
 We  also establish a criterion for   $\chowgri_j(h)$ to be bijective for $j<r_1$ and surjective for $r_1\le j<r_2$ (in Corollary \ref{ccones}). 

 Chow-weight homology  also "detects effectivity" of arbitrary objects of $\dmger$ and has several other remarkable properties; in particular, it bounds the weights of motives, that is,  $M\in \dmger_{\wchow\ge -n}$ in the notation of \S\ref{swchow} below if and only if its Chow-weight homology vanishes in degrees $>n$ (see Theorem \ref{tmain}(3); note also that all statements of this sort can be easily extended to objects of $\dmgm$ similarly to Remark \ref{rmain}(\ref{igm})). Certainly, all of these results have natural (co)homological consequences (see \S\ref{sconj}, \S\ref{sesn}, and \S\ref{small});  they are also related to the study of $\#X(k_0)$ modulo powers of $\#k_0$, where $k_0$ is a finite field and $X$ is a $k_0$-variety.

  Moreover, one can "mix" the effectivity criteria with the weight ones; in particular, this yields a criterion for $M$ to be $r-1$-{\it motivically connected} (i.e., to belong to $\dmer^{\thomr\le -r}$ for some $r\in \z$; here $\thomr$ is the $R$-linear version of the homotopy $t$-structure of Voevodsky that we recall in 	Remark \ref{rhomr}	below). 
	 Furthermore, our Chow-weight homology criteria can be used to determine whether an object $M$ of $ \shtrc$  belongs either to $\obj SH_{R}^{eff}$  (i.e., to the zeroth level of the $R$-linear {\it slice filtration}) or to $\shtr^{\tshr\le 0}$ (i.e., it is {\it homotopy connective}; 
	 we obtain an if and only if statement under the assumption that $k$ is unorderable).
	We also prove the following remarkable statement (using certain results of \cite{bsosnl}): if the higher degree Chow-weight homology (resp. motivic homology) groups of $M$ are torsion then  their exponents are finite; see Corollary \ref{chtors}(III) and Remark \ref{rweirdtorsion}(1).

	Furthermore, the higher degree non-zero Chow-weight homology groups are just the corresponding motivic homology groups of a motive. Consequently, applying our theory to the motif with compact support of an arbitrary $k$-variety $X$ one obtains the following statement (cf. Theorem \ref{tlec} below); we will 
	 write $p$ for the exponential  characteristic of  $k$ in it.
	
	\begin{theore}\label{tleci}

Let $r\ge 0$, $X$ is a $k$-variety, and assume that $K$ is a universal domain containing $k$.

I. Assume that $\chow_j(X_K,\q)=\ns$ 
 for $0\le j<r$. Then the following statements are valid. 

1. There exists  $E>0$ such that $E\chow_{j}(X_{k'},\zop)=\ns$ for all $0\le j<r$  and all field extensions $k'/k$.

2.  If $k$ is a subfield of $\com$ 
then the (highest) $q$-th 
  weight factor of the mixed Hodge structure $H^{q}_c(X_{\com})$ of the singular cohomology of $X(\com)$ with compact support  is $r$-effective (as a pure Hodge structure). Furthermore, the same property of Deligne weight factors of $H^{q}_c(X_{k^{alg}})$ is fulfilled for 
	 \'etale cohomology 
	 with values in the category of $\ql[\gal(k^{alg}/k)]$-modules if $k$ is a finitely generated field; 
 see Remark \ref{rdetect}(\ref{ideg}) (and Proposition \ref{pgs}(1)). 

3. The motif $\mgcq(X)$ (see Proposition \ref{pmgc}) is an extension of  an element of $\dmgeq_{\wchow\ge 1}$ (see \S\ref{swchow}) by an object of $\choweq\lan r\ra$.  


II. Assume that $X=X_1\times X_2$, where $X_1$ and  $X_2$ are $k$-varieties, and that for some $r_1,r_2\ge 0$ such that  $r=r_1+r_2$  we have $\chow_j(X_{i,K})=\ns$  whenever $0\le j<r_i$ and $i=1,2$.  Then $\chow_j(X_K,\q)=\ns$  for $0\le j<r$.  

\end{theore}

\begin{rrema}\label{rtleci}
1. The vanishing of lower Chow groups is quite "common" for non-proper varieties; in particular, if suffices to assume that $X$ is an open subvariety of $X'\times \aff^r$ for some $k$-variety $X'$ (cf. Remark \ref{rsingul}(\ref{irsi1},\ref{irsta}) below for more detail). 


2. 
These statements (along with other parts of Theorem \ref{tlec} that we did not put here; see also Corollary \ref{cmhcs} that relies on the less obvious relations between Chow-weight homology and motivic homology)  
 are easily seen to generalize the corresponding (rather well-known) properties of proper smooth varieties. Still it appears to be no way to prove them using the "usual" decomposition of the diagonal arguments. There are two reasons for this:  firstly, algebraic cycles on $X\times X$ do not act on the cohomology of $X$ in 
 general; secondly, the assertions of Theorem \ref{tleci} appear to be "substantially triangulated".

3. Thus the results of the current paper demonstrate that the language of Chow weight structures, weight complexes, and Chow-weight homology 
 is appropriate for extending decomposition of the diagonal results to varieties that are either singular or non-proper, and to general of Voevodsky motives. The main disadvantage of Chow-weight homology is  that its values are often huge (since ordinary Chow groups are); cf. Remarks \ref{rmain}(\ref{icalc}) and \ref{rcyclass}(2)  below.

4. For the sake of the readers 
scared of Voevodsky motives, we  also note that our results can be applied to $K^b(\chower)$ (i.e., to complexes of $R$-linear Chow motives) instead of $\dmger$; see Remark \ref{rcomplexes}(1) below. 
Yet even these more elementary versions of our results are "quite triangulated", and their proofs involve certain triangulated categories of birational motives.\footnote{We also note that the vanishing of Chow-weight homology of $M$  in {\bf negative} degrees does not yield the corresponding bound on the weights of $M$ (in contrast to Theorem \ref{tmain}(3); see Remark \ref{rmore}(\ref{icoex})). Hence our results (including their  $K^b(\chower)$-versions) cannot be deduced from Proposition \ref{prpr} (and from other statements of this sort).}\ 
	\end{rrema}

Now let us describe the contents of the paper; some more information of this sort can be found at the beginnings of sections.

In \S\ref{sws} we recall  
 some of the theory of weight structures. 

In \S\ref{smotr} we describe several properties of %
(various categories of) Chow and Voevodsky motives and of  Chow weight structures for the latter. %
The most important  (though somewhat technical) 
results of this section are Proposition \ref{pcrulemma}(\ref{iсru2},\ref{iсru5}) 
on 
  morphisms between Chow motives.
We also prove 
 some auxiliary statements on the behaviour of complexes whose terms are certain (higher) Chow groups under morphisms of base fields; most of these results are  more or less well-known.

In \S\ref{smain} we define (our main) {\it Chow-weight} homology theories and study the properties of Chow-weight homology of arbitrary objects of the Voevodsky category $\dmger$. 
 In particular we express the weights of a 
 motif $M\in \obj \dmger$ (defined in terms of the Chow weight structure) and its effectivity (i.e., whether it belongs to  $\obj\dmger\otimes \lmb^{\otimes r}$ for a given $r>0$) in terms of its Chow-weight homology. 
We also relate the vanishing of the higher degree Chow-weight homology  of $M$ to that of its motivic homology (along with its 
 motivic connectivity) and to the effectivity of the higher (Deligne) weight factors of cohomology.  
Moreover, the combination of two (of more or less "standard") motivic conjectures yields that the implications of the latter type are in fact equivalences (see Proposition \ref{pconj}). 
Furthermore, we prove that the vanishing of rational Chow-weight homology of $M$ in a certain range  is "almost equivalent" to $M$ being an extension of a 
 motif satisfying the integral Chow-weight homology vanishing in the same range by a torsion 
 motif (see Theorem \ref{ttors}). This implies the following: if 
the higher motivic homology 
  groups of a 
	 motif $M$ are torsion, then their exponents 
 are finite.

In \S\ref{samgc} we apply 
 our general results to motives with compact support of arbitrary $k$-varieties. In particular, we obtain Theorem \ref{tleci} (as a part of  Theorem \ref{tlec}; cf. also Corollary \ref{cmhcs}). We also recall that in the case where $k$ is finite the effectivity conditions for motives are  closely related to the number of rational points of $k$-varieties  (taken modulo powers of $q=\#k$); see Proposition \ref{pesn}(2).
 Moreover, we re-prove and generalize certain decomposition of the diagonal results of \cite{paranconn} and \cite{later}; in the process we demonstrate the relation of our methods and results to the "usual" cycle-theoretic formulations of  decompositions of the diagonal statements.

In \S\ref{ssupl} we 
 prove some more  statements and discuss further developments of the theory. 
We study the finite-dimensionality of Chow-weight homology and of Chow groups and relate it to cycle classes for Chow-weight homology; this gives a certain generalization of Theorem \ref{tleci}(I) in the case where $k$ is a universal domain. 
	We also dualize some of our results;  
this allows us to calculate the dimensions of motives and bound their weights  (from above) in terms of their {\it Chow-weight cohomology}.
  Moreover, we relate our results to the motivic spectral categories $\shtrc$ (using the results of \cite{binfeff} and \cite{bacons}; see Remark \ref{rstairs}(2)).
Furthermore, we  
make several remarks on possible developments of our results (in \S\ref{smore}). 

\section*{List of main definitions and notation}

For the convenience of  the readers we list some of the terminology and notation used in this paper. The reader may certainly ignore this section.


\begin{itemize}

\item
Karoubian categories, Karoubi envelopes, extension-closed and Karoubi-closed subcategories, extension-closures, Karoubi-closures, $X\perp Y$, $D^\perp$, and ${}^\perp{}D$  are defined in \S\ref{snotata}.

\item Weight structures (general and bounded ones), their hearts,  the classes $\cu_{w\ge i}$, $\cu_{w\le i}$, 
$\cu_{w= i}$, $\cu_{[i,j]}$, weight-exact functors, negative subcategories of triangulated categories,  weight truncations 
 $w_{\le m}M$, $w_{\ge m}M$, and $m$-weight decompositions  
are recalled in \S\ref{ssws}. 

\item Weight complexes, weight filtrations, and weight spectral sequences are recalled in \S\ref{swss}. 

\item The motivic categories  $\chower\subset \dmger\subset \dmer\subset \dmerb$ and $\chowr$, the functor $\mgr$,  (shifted) Tate twists $\lan r \ra=-(r)[2r]$,  and the homotopy $t$-structure $\thomr$  are introduced in \S\ref{smotnot}.  

 \item The Chow weight structures $\wchow$ on $\dmger$, on its twists $\dmger\lan n \ra$, and its subcategories $d_{\le m}\dmger$, along with $r$-effectivity and dimensions for motives and their Chow groups  $ \chowm_*(-)$ are introduced in \S\ref{swchow}. We also define the functor  $l^r:\dmger\to \dmgrr$, and introduce the Chow weight structure $\wcho^r$ on $\dmgrr$ for any $r\ge 0$. 

\item Essentially finitely generated extensions of fields, universal domains, fields of definition for motives, rational extensions, and function fields along with their dimensions are defined in \S\ref{stestf}.

\item Our "main" Chow-weight homology functors  $\chw^*_{*}(-_K,R)$   and   $\chw^{*}_{*}(-_K,R,*)$ are introduced in \S\ref{schw} (whereas  the "Poincare dual" Chow-weight cohomology functors  $\cchw^{*,*}(-_K,R)$ and   $\cchw^{*,*}(-_K,R,*)$ are defined   in \S\ref{iessdim}).

\item Staircase sets $\ii\subset \z\times  [0,+\infty)$ (this includes sets of the type $\ii^{\lan c \ra}$) are introduced in \S\ref{stairs}; some examples for them are drown in Remark \ref{rstair}(2), Definition \ref{defflec}, and Corollary \ref{cmothomol}(3).

\item \'Etale and singular cohomology functors and Deligne's weights $\wed_*H^*$ on their values are considered in \S\ref{sconj}. 

\item Motives with compact support  $\mgcq(-)$ and $\mgcr(-)$ are recalled in \S\ref{smgc}. 

\item The triangulated categories $\sht$ and $\shtr$ of motivic spectra and various filtrations on them (along with the "motivization" functors into $\dm$ and $\dmr$) are considered in \S\ref{sht}.
\end{itemize}

We will treat both the characteristic $0$ and the positive characteristic  case below. Yet the reader may certainly assume that the characteristic of $k$ is $0$ throughout the paper (this allows to ignore all the perfectness issues along with the assumption $1/p\in R$).

The  authors are deeply grateful to 
H. Esnault, 
M. Ivanov, and 
M. Levine  for  their interesting discussions concerning the paper, and to D. Kumallagov for very useful comments to the text.
 The first author is also deeply grateful to the officers of the Max Planck Institut f\"ur Mathematik for the wonderful working conditions during the writing of \S\ref{samgc}. 

\section{Some preliminaries on weight structures}\label{sws}

This section is dedicated to recalling the theory of weight structures in triangulated categories. 

In \S\ref{snotata} we introduce some notation and conventions for (mostly, triangulated) categories; we also prove two simple lemmas.


In \S\ref{ssws} we recall the definition and basic properties of
weight structures.

In \S\ref{swesloc} we 
relate weight structures to localizations.

In \S\ref{swss} we recall several properties of weight complexes and weight spectral sequences. 

\subsection{Some (categorical) notation and lemmas}\label{snotata}

\begin{itemize}
\item For $a\le b\in \z$ we will write $[a,b]$ (resp. $[a,+\infty)$, resp.  $[a,+\infty]$) for the set $\{i\in \z:\ a\le i \le b\}$ (resp. $\{i\in \z:\ i\ge a \}$, resp. $[a,+\infty)\cup \{+\infty\}\subset \z\cup  \{+\infty\}$); we will never consider  real line segments in this paper. Respectively, when we will write $i\ge c$ (for $c\in \z$) we will mean that $i$ is an integer satisfying this inequality.

\item Given a category $C$ and  $X,Y\in\obj C$  we will write
$C(X,Y)$ for  the set of morphisms from $X$ to $Y$ in $C$.

\item For categories $C',C$ we write $C'\subset C$ if $C'$ is a full 
subcategory of $C$.

\item Given a category $C$ and  $X,Y\in\obj C$, we say that $X$ is a {\it
retract} of $Y$ 
 if $\id_X$ can be 
 factored through $Y$.\footnote{Certainly,  if $C$ is triangulated or abelian, 
then $X$ is a retract of $Y$ if and only if $X$ is its direct summand.}\ 

\item An additive subcategory $\hu$ of an additive category $C$ 
is called {\it Karoubi-closed}
  in $C$ if it
contains all retracts of its objects in $C$.
The full subcategory $\kar_{C}(\hu)$ of additive category $C$ whose objects
are all the retracts of objects of a subcategory $\hu$ (in $C$) will be
called the {\it Karoubi-closure} of $\hu$ in $C$. 

\item The {\it Karoubi envelope} $\kar(\bu)$ (no lower index) of an additive
category $\bu$ is the category of ``formal images'' of idempotents in $\bu$. Consequently, its objects are the pairs $(A,p)$ for $A\in \obj \bu,\ p\in \bu(A,A),\ p^2=p$, and the morphisms are given by the formula 
$$\kar(\bu)((X,p),(X',p'))=\{f\in \bu(X,X'):\ p'\circ f=f \circ p=f \}.$$ 
 The correspondence  $A\mapsto (A,\id_A)$ (for $A\in \obj \bu$) fully embeds $\bu$ into $\kar(\bu)$.
 Moreover, $\kar(\bu)$ is {\it Karoubian}, i.e.,  any idempotent morphism yields a direct sum decomposition in 
 $\kar(\bu)$. 
 Recall also that $\kar(\bu)$ is
triangulated if $\bu$ is (see \cite{bashli}).

\item The symbol $\cu$ below will always denote some triangulated category;
usually it will
be endowed with a weight structure $w$. 

\item For any  $A,B,C \in \obj\cu$ we will say that $C$ is an {\it extension} of $B$ by $A$ if there exists a distinguished triangle $A \to C \to B \to A[1]$.

\item A class $D\subset \obj \cu$ is said to be  {\it extension-closed}
    if it 
		is closed with respect to extensions and contains $0$. We will call the smallest extension-closed subclass 
of objects of $\cu$ that  contains a given class $B\subset \obj\cu$ 
  the 
{\it extension-closure} of $B$. 

Moreover, we will call  the smallest extension-closed Karoubi-closed subclass  of objects of $\cu$ that  contains $B$ the {\it envelope} of $B$. 

\item Given a class $D$ of objects of $\cu$ we will write $\lan D\ra$ or $\lan D\ra_{\cu}$ for the smallest full Karoubi-closed
triangulated subcategory of $\cu$ containing $D$. We will  call  $\lan D\ra$  the triangulated category {\it densely generated} by $D$.

\item For $X,Y\in \obj \cu$ we will write $X\perp Y$ if $\cu(X,Y)=\ns$. For
$D,E\subset \obj \cu$ we write $D\perp E$ if $X\perp Y$ for all $X\in D,\
Y\in E$.
Given $D\subset\obj \cu$ we  will write $D^\perp$ for the class
$$\{Y\in \obj \cu:\ X\perp Y\ \forall X\in D\}.$$
Dually, ${}^\perp{}D$ is the class
$\{Y\in \obj \cu:\ Y\perp X\ \forall X\in D\}$.

\item Given $f\in\cu (X,Y)$, where $X,Y\in\obj\cu$, we will call the third vertex
of (any) distinguished triangle $X\stackrel{f}{\to}Y\to Z$ a {\it cone} of
$f$.\footnote{Recall 
that different choices of cones are connected by non-unique isomorphisms.}\

\item For an additive category $\bu$ we  write $K(\bu)$ for the homotopy category
of (cohomological) complexes over $\bu$. Its full subcategory of
bounded complexes will be denoted by $K^b(\bu)$. We will write $M=(M^i)$ if $M^i$ are the terms of the complex $M$.

\item Note yet that we will call any (covariant) homological functor (from a triangulated category) a homology theory. Consequently, for a complex $A=(A^i,d^i)$ of 
abelian groups we  call the quotient $\ke d^i/\imm d^{i-1}$ the $i$-th {\bf homology} of $A$ (in particular, we use this "cohomological" convention for the Chow-weight homology theory).

\end{itemize}


\subsection{Weight structures: basics}\label{ssws}

Let us recall the definition of the  notion that is central for this paper.

\begin{defi}\label{dwstr}

I. A couple of subclasses $\cu_{w\le 0},\cu_{w\ge 0}\subset\obj \cu$ 
will be said to define a weight structure $w$ on a triangulated category  $\cu$ if 
they  satisfy the following conditions.

(i) $\cu_{w\ge 0}$ and $\cu_{w\le 0}$ are 
Karoubi-closed in $\cu$ (i.e., contain all $\cu$-retracts of their objects).

(ii) {\bf Semi-invariance with respect to translations.}

$\cu_{w\le 0}\subset \cu_{w\le 0}[1]$, $\cu_{w\ge 0}[1]\subset
\cu_{w\ge 0}$.

(iii) {\bf Orthogonality.}

$\cu_{w\le 0}\perp \cu_{w\ge 0}[1]$.

(iv) {\bf Weight decompositions}.

 For any $M\in\obj \cu$ there
exists a distinguished triangle
$$X\to M\to Y
{\to} X[1]$$
such that $X\in \cu_{w\le 0},\  Y\in \cu_{w\ge 0}[1]$.\end{defi}

We will also need the following definitions.

\begin{defi}\label{dwso}

Let $i,j\in \z$; assume that a triangulated category $\cu$ is endowed with a weight structure $w$.

\begin{enumerate}
\item\label{idh}
The full subcategory $\hw$ of $ \cu$ whose objects are
$\cu_{w=0}=\cu_{w\ge 0}\cap \cu_{w\le 0}$  is called the {\it heart} of 
$w$.

\item\label{id=i}
 $\cu_{w\ge i}$ (resp. $\cu_{w\le i}$, resp.
$\cu_{w= i}$) will denote $\cu_{w\ge
0}[i]$ (resp. $\cu_{w\le 0}[i]$, resp. $\cu_{w= 0}[i]$).

\item\label{id[ij]}
$\cu_{[i,j]}$  denotes $\cu_{w\ge i}\cap \cu_{w\le j}$; hence this class  equals $\ns$ if $i>j$. 

$\cu^b\subset \cu$ will be the category whose object class is $\cup_{i,j\in \z}\cu_{[i,j]}$.

\item\label{idbo}
We will  say that $(\cu,w)$ is {\it  bounded}  if $\cu^b=\cu$ (i.e., if
$\cup_{i\in \z} \cu_{w\le i}=\obj \cu=\cup_{i\in \z} \cu_{w\ge i}$).

\item\label{idwe}
 Let 
  $\cu'$ be a triangulated category endowed with  a weight structure $w'$; let $F:\cu\to \cu'$ be an exact functor.

$F$ is said to be  
{\it  weight-exact} 
(with respect to $w,w'$) if it maps
$\cu_{w\le 0}$ into $\cu'_{w'\le 0}$ and
sends $\cu_{w\ge 0}$ into $\cu'_{w'\ge 0}$. 

\item\label{idrest}
Let $\du$ be a full triangulated subcategory of $\cu$.

We will say that $w$ {\it restricts} to $\du$ whenever the couple $(\cu_{w\le 0}\cap \obj \du,\ \cu_{w\ge 0}\cap \obj \du)$ is a weight structure on $\du$.

\item\label{idneg}
 Let $\hu$ be a 
full subcategory of a triangulated category $\cu$.

We will say that $\hu$ is {\it negative} if
 $\obj \hu\perp (\cup_{i>0}\obj (\hu[i]))$.

\end{enumerate}

\end{defi}

\begin{rema}\label{rstws}

1. A  simple (and yet quite useful) example of a weight structure comes from the stupid
filtration on 
$K^b(\bu)$ (or on $K(\bu)$) for an arbitrary additive category
 $\bu$. 
In this case $K^b(\bu)_{w\le 0}$ (resp. $K^b(\bu)_{w\ge 0}$) will be the class of complexes that are
homotopy equivalent to complexes  concentrated in degrees $\ge 0$ (resp. $\le 0$); see \cite[Remark 1.2.3(1)]{bonspkar}.  
 
 The heart of this weight structure is the Karoubi-closure  of $\bu$
 in  $K^b(\bu)$ (or in $K(\bu)$, respectively).

2. A weight decomposition (of any $M\in \obj\cu$) is almost never canonical. 

Still for any $m\in \z$ the axiom (iv) gives the existence of a distinguished triangle \begin{equation}\label{ewd} w_{\le m}M\to M\to w_{\ge m+1}M \end{equation}  with some $ w_{\ge m+1}M\in \cu_{w\ge m+1}$ and $ w_{\le m}M\in \cu_{w\le m}$; we will call it an {\it $m$-weight decomposition} of $M$.

 We will often use this notation below (even though $w_{\ge m+1}M$ and $ w_{\le m}M$ are not canonically determined by $M$);
we will call any possible choice either of $w_{\ge m+1}M$ or of $ w_{\le m}M$ (for any $m\in \z$) a {\it weight truncation} of $M$.
Moreover, when we will write arrows of the type $w_{\le m}M\to M$ or $M\to w_{\ge m+1}M$ we will always assume that they come from some $m$-weight decomposition of $M$.

3. In the current paper we use the ``homological convention'' for weight structures; 
it was previously used in \cite{wild}, 
 \cite{binters}, \cite{bonivan},  \cite{bonspkar},  \cite{bokum}, 
	 \cite{bgn},  \cite{bwcomp}, and \cite{bkillw}, whereas in 
\cite{bws} and in \cite{bger} the ``cohomological convention'' was used. In the latter convention 
the roles of $\cu_{w\le 0}$ and $\cu_{w\ge 0}$ are interchanged, i.e., one
considers   $\cu^{w\le 0}=\cu_{w\ge 0}$ and $\cu^{w\ge 0}=\cu_{w\le 0}$. Consequently,  a
complex $X\in \obj K(\bu)$ whose only non-zero term is the fifth one (i.e.,
$X^5\neq 0$) has weight $-5$ in the homological convention, and has weight $5$
in the cohomological convention. Thus the conventions differ by ``signs of
weights''; 
 $K(\bu)_{[i,j]}$ is the class of retracts of complexes concentrated in degrees
 $[-j,-i]$. 
 
 We also recall that 
D. Pauksztello has
introduced weight structures independently in \cite{konk}; he called them
co-t-structures. 

 4. The orthogonality axiom (iii) in Definition \ref{dwstr} immediately yields that $\hw$ is negative in $\cu$.
 We will formulate a certain converse to this statement 
 below.

\end{rema}

Let us recall some basic  properties of weight structures.
Starting from this moment we will assume that all the weight structures we consider are bounded (unless 
 specified otherwise; this is quite sufficient for 
 our purposes everywhere except in the proof of Proposition \ref{pcwh}(6) and Remark \ref{rweirdtorsion}(3)).

\begin{pr} \label{pbw}
Let $\cu$ be a triangulated category, $n\ge 0$; we will assume 
that $w$ is a fixed (bounded)
weight structure on $\cu$  everywhere except in assertion  \ref{igen}.

\begin{enumerate}

\item \label{idual}
The axiomatics of weight structures is self-dual, i.e., for $\du=\cu^{op}$
(so $\obj\du=\obj\cu$) there exists the (opposite)  weight
structure $w'$ for which $\du_{w'\le 0}=\cu_{w\ge 0}$ and
$\du_{w'\ge 0}=\cu_{w\le 0}$.

\item\label{igenw0}
   $\cu_{w\le 0}$ is the
   extension-closure of $\cup_{i\le 0}\cu_{w=i}$ in $\cu$; $\cu_{w\ge 0}$ is the
  extension-closure of $\cup_{i\ge 0} \cu_{w=i}$ in $\cu$.

 \item\label{iort}
 $\cu_{w\ge 0}=(\cu_{w\le -1})^{\perp}$ and $\cu_{w\le 0}={}^{\perp} \cu_{w\ge 1}$.


\item\label{icompl} Let $ m\le l\in \z$, $X,X'\in \obj \cu$; fix certain weight decompositions
        of $X[-m]$ and $X'[-l]$. Then  any morphism
$g:X\to X'$ can be
extended 
to a commutative diagram of the corresponding distinguished triangles (see Remark \ref{rstws}(2)):
 $$\begin{CD} w_{\le m} X@>{}>>
X@>{}>> w_{\ge m+1}X\\
@VV{}V@VV{g}V@ VV{}V \\
w_{\le l} X'@>{}>>
X'@>{}>> w_{\ge l+1}X' \end{CD}
$$

Moreover, if $m<l$ then this extension is unique (provided that the rows are fixed).

\item\label{iwe} Assume that $w'$ is a weight structure
for a triangulated category $\cu'$. Then an exact functor $F:\cu\to \cu'$ is
weight-exact if and only if $F(\cu_{w=0})\subset \cu'_{w'=0}$.

 \item\label{iwd0} 
 If $M$ belongs to $ \cu_{w\ge -n}$ 
 then $w_{\le 0}M$ belongs to $ \cu_{[-n,0]}$.

\item\label{itrun}
If $m<l\in \z$ and $M\in \obj \cu$ then for any choice of arrows $w_{\le l}M\to M$ and $w_{\le m}(w_{\le l}M)\to w_{\le l}M$ that can be completed to an $l$-weight decomposition and an $m$-weight decomposition triangle (see Remark \ref{rstws}(2)) respectively,  the composition morphism   $w_{\le m}(w_{\le l}M)\to M$ can be completed to an  $m$-weight decomposition of $M$.

\item \label{igen}
Let  $D\subset \obj \cu$ be a negative additive subcategory. Then there exists a unique weight structure $w_T$ on 
 $T=\lan D\ra_{\cu}$ such that $D\subset T_{w_T=0}$. It is bounded; its heart 
equals the Karoubi-closure  of  $D$ in $\cu$. Moreover, $T$ is Karoubian whenever $D$ is. 

Furthermore, if there exists a weight structure $w$ on $\cu$ such that $D\subset \hw$, then the embedding $T\to \cu$ is {\it strictly weight-exact}, i.e., $T_{w_T\le 0}=\obj T\cap \cu_{w\le 0}$ and $T_{w_T\ge 0}=\obj T\cap \cu_{w\ge 0}$. 

\item\label{ifactp} For any $M,N \in \obj \cu$ and $f\in \cu(N,M)$
if $M $ belongs to $ \cu_{w\ge 0}$,
then $f$ factors through (any possible choice of) $w_{\ge 0}N$. Dually, if $N $ belongs to $ \cu_{w\le 0}$ then $f$ factors through $w_{\le 0}M$.

\item\label{ifactps} 
Let $\du$ be a (full) triangulated subcategory of $\cu$ such that $w$ restricts to $\du$; 
let $M\in \cu_{w\le 0}$, $N\in \cu_{w\ge -n}$, 
and $f\in \cu(M,N)$. Suppose that $f$ factors through an object $P$ of $\du$, i.e., there exist $u_1 \in\cu(M,P)$ and $u_2 \in \cu(P,N)$ such that $f=u_2 \circ u_1$. 
Then $f$ factors through an element of $\du_{[-n,0]}$.

\end{enumerate}
\end{pr}
\begin{proof}
Assertions \ref{idual}--\ref{icompl} were proved in \cite{bws} (pay attention to Remark \ref{rstws}(3)!). Assertion \ref{iwe} follows immediately from Lemma 2.7.5 of \cite{bger}.

Assertion \ref{iwd0} follows immediately from the fact that  
 the classes $\cu_{w\ge -n}$ and $\cu_{w\le 0}$ are extension-closed (cf. assertion \ref{igenw0}). 

\ref{itrun}. The octahedral axiom of triangulated categories implies that the object $ C=\co(w_{\le m}(w_{\le l}M)\to M)$ is an extension of (the corresponding) $w_{\ge l+1}M$ by $w_{\ge m+1}(w_{\le l}M)$. Hence $C$ belongs to $\cu_{w\ge m+1}$ (cf. assertion \ref{igenw0} once again); thus $w_{\le m}(w_{\le l}M)\to M\to C$ is an $m$-weight decomposition triangle.

Assertion \ref{igen} is given by Remark 2.1.2 of \cite{bonspkar}.

Assertion \ref{ifactp} is an easy consequence of assertion \ref{icompl}.

\ref{ifactps}. Assertion \ref{ifactp} yields that $u_2$ factors through $w_{\ge -n}P$; thus we can  assume that $P $ belongs to $ \du_{w\ge -n}$. Next, the dual to assertion \ref{ifactp} (see assertion \ref{idual}) yields that $u_1$ factors through $w_{\le 0}P$. 
It remains to note that we can choose $w_{\le 0}P$ that belongs to $ \du_{[-n,0]}$ (see assertion \ref{iwd0}).\end{proof}

\subsection{Weight structures in localizations}\label{swesloc}

\begin{defi}
We call a category $\frac A B$ the {\it factor} of an additive
category $A$
by its full additive subcategory $B$ if $\obj \bl \frac A B\br=\obj
A$ and $(\frac A B)(X,Y)= A(X,Y)/(\sum_{Z\in \obj B} A(Z,Y) \circ
A(X,Z))$.
\end{defi}

\begin{pr}\label{ploc}
Let $\du\subset \cu$ be a 
triangulated subcategory of
$\cu$; suppose that $w$ restricts to a weight structure $w_{\du}$ on $\du$ (see Definition \ref{dwso}(\ref{idrest})).
Denote by $l$ the localization functor $\cu\to \cu/\du$ (the latter category is the Verdier quotient of $\cu$
by $\du$) .

Then the following statements are valid.

1. $w$ induces a weight structure on
 $\cu/\du$, i.e.,  the Karoubi-closures of $l(\cu_{w\le  0})$ and $l(\cu_{w\ge  0})$ in $\cu/\du$ 
give a weight structure on this category.

2. Suppose  $(\cu, w)$ is bounded.
For $X\in \obj \cu$
assume that $l(X)\in \cu/\du_{w_{\cu/\du}\ge 0}$. 
Then $X$ is an extension of some element of $\cu_{w\ge 0}$ by an element of
$\du_{w_{\du}\le -1}$ (see \S\ref{snotata}).

3. 
The heart ${\underline{Hw}}_{\cu/\du}$ 
of 
the weight structure $w_{\cu/\du}$ so obtained 
is the Karoubi-closure  of (the natural image of) $\frac { \hw} {\hw_{\du}}$ in
$\cu/\du$. 

4. If $(\cu,w)$ is bounded, 
 then $\cu/\du$
also is.

\end{pr}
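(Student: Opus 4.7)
The plan handles the four parts in sequence, with Parts 1 and 2 carrying the substance. For Part 1, I verify the weight structure axioms for the Karoubi-closures of $l(\cu_{w\le 0})$ and $l(\cu_{w\ge 0})$ in $\cu/\du$. Karoubi-closedness is built in; semi-invariance is immediate from $l$ commuting with shifts; and weight decompositions for $l(X)$ are obtained by applying $l$ to those in $\cu$. The nontrivial axiom is orthogonality $l(\cu_{w\le -1}) \perp l(\cu_{w\ge 0})$. Given $X \in \cu_{w\le -1}$ and $Y \in \cu_{w\ge 0}$, I would represent a morphism $l(X) \to l(Y)$ by a roof $X \xleftarrow{s} Z \to Y$ with $D := \co(s) \in \obj\du$. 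Using that $w$ restricts to $\du$, take a weight decomposition $D_{w\le -1} \to D \to D_{w\ge 0}$ in $\du$. Orthogonality in $\cu$ kills $X \to D \to D_{w\ge 0}$, so $X \to D$ lifts through $D_{w\le -1}$; an octahedral argument then yields a refined roof whose middle object lies in $\cu_{w\le -1}$, whence the roof vanishes by $\cu_{w\le -1} \perp \cu_{w\ge 0}$ in $\cu$.

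For Part 2, I argue by induction on $n$ with $X \in \cu_{w\ge -n}$; the base case $n = 0$ is trivial ($D = 0$, $X' = X$). For the inductive step, take a weight decomposition $X_- \to X \to X_+$ with $X_- \in \cu_{w=-n}$ (using Proposition \ref{pbw}(\ref{iwd0}) after shifting by $[n]$) and $X_+ \in \cu_{w\ge -n+1}$. Orthogonality in $\cu/\du$ (Part 1) forces $l(X_-) \to l(X)$ to vanish, so $X_- \to X$ factors through some object of $\du$; Proposition \ref{pbw}(\ref{ifactps}), applied after shifting by $[n]$ so that both endpoints sit in weight $0$, refines this to a factorization $X_- \xrightarrow{\phi} D \xrightarrow{\psi} X$ with $D \in \du_{w=-n}$. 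Setting $X^1 := \co(\psi)$, the octahedral axiom for $X_- \xrightarrow{\phi} D \xrightarrow{\psi} X$ produces a triangle $A \to X_+ \to X^1$ with $A = \co(\phi)$. Since $A$ fits in $D \to A \to X_-[1]$ with $D \in \cu_{w=-n}$ and $X_-[1] \in \cu_{w=-n+1}$, we get $A \in \cu_{[-n,-n+1]}$ and hence $A[1] \in \cu_{w\ge -n+1}$; combined with $X_+ \in \cu_{w\ge -n+1}$ this gives $X^1 \in \cu_{w\ge -n+1}$. Since $l(D) = 0$, the triangle $D \to X \to X^1$ yields $l(X^1) \cong l(X) \in (\cu/\du)_{w\ge 0}$, so the inductive hypothesis applies. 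Iterating produces a chain $X = X^0 \to X^1 \to \cdots \to X^n$ whose successive cofibres lie in $\du_{w\le -1}$ and with $X^n \in \cu_{w\ge 0}$; by extension-closure of $\du_{w\le -1}$, the total fibre of $X \to X^n$ provides the desired $D' \in \du_{w\le -1}$.

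For Part 3, any $T \in \hw_{\cu/\du}$ has the form $l(X)$ for some $X \in \obj\cu$. Applying Part 2 to $l(X) \in (\cu/\du)_{w\ge 0}$ and using $l(D) = 0$, we may replace $X$ by some $X' \in \cu_{w\ge 0}$ with $l(X') \cong T$. The weight decomposition $X'_- \to X' \to X'_+$ then has $X'_- \in \hw$ by Proposition \ref{pbw}(\ref{iwd0}); applying $l$ and splitting via orthogonality in $\cu/\du$ (using $l(X') \in (\cu/\du)_{w\le 0}$) shows $T$ is a retract of $l(X'_-) \in l(\hw)$. This proves $\hw_{\cu/\du}$ equals the Karoubi-closure of the image of $\hw/\hw_\du$ in $\cu/\du$; the reverse containment is immediate. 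For Part 4, boundedness transfers directly: $X \in \cu_{[i,j]}$ implies $l(X) \in (\cu/\du)_{[i,j]}$, so $(\cu/\du)^b = \cu/\du$.

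The main obstacle is the weight control in Part 2, specifically that each iteration strictly improves the lowest weight of $X^i$. This rests on $\phi: X_- \to D$ being a morphism \emph{within} $\cu_{w=-n}$: only then does $\co(\phi)$ sit in $\cu_{[-n,-n+1]}$, so that $\co(\phi)[1]$ falls into $\cu_{w\ge -n+1}$. Achieving such a "pure weight" factorization is exactly what Proposition \ref{pbw}(\ref{ifactps}) provides after the appropriate shift; without this sharpening of the factorization, the induction would stall at the weight $-n$ level.
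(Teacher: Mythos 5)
The paper does not prove this proposition itself; it cites \S 8.1 of \cite{bws} for parts 1, 3, 4 and Remark 3.2.6 of \cite{sosn} for part 2. Your self-contained argument, re-deriving everything from the axioms plus Proposition \ref{pbw}, is correct in substance and follows what is essentially the standard route (for part 1, the roof-refinement argument for orthogonality; for part 2, the descending induction on the lowest weight of $X$, which is the same strategy as in \cite{sosn}). Your observation about why Proposition \ref{pbw}(\ref{ifactps}) is what makes the induction actually terminate is exactly the right thing to emphasize.

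One small slip at the end of Part 2: you write that the successive \emph{cofibres} of the chain $X^0 \to X^1 \to \cdots \to X^n$ lie in $\du_{w\le -1}$. In fact the cofibre of $X^{i-1}\to X^i$ is $D^i[1]$ with $D^i\in\du_{w=-(n-i+1)}$, so the last cofibre $D^n[1]$ sits in $\du_{w=0}$ and need not lie in $\du_{w\le -1}$. What is true — and what you actually need — is that the successive \emph{fibres} $D^i$ all lie in $\du_{w\le -1}$ (since $-(n-i+1)\le -1$ for $1\le i\le n$); iterated octahedra then show that the total fibre of $X\to X^n$ is built from the $D^i$ by extensions, hence lies in $\du_{w\le -1}$ by extension-closure. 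So the conclusion you draw is correct, but the sentence that justifies it names the wrong objects. A second, purely contextual remark: your Part 3 invokes Part 2, which requires boundedness; this is harmless here because the paper imposes boundedness globally on all its weight structures, but the reference \cite{bws} establishes part 3 without that hypothesis, so your argument is a shortcut specific to the bounded case rather than a proof of the statement in full generality.
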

\begin{proof}
Assertions 1,3, and 4 were proved in  \S8.1 of \cite{bws}; assertion 2 is an easy consequence of Theorem 3.3.1 of \cite{bsosnl} 
(as demonstrated by Remark 3.3.2(1) of ibid.). \end{proof}

\begin{rema}\label{rloc}

1. Part 2 of our proposition gives the existence of a distinguished triangle $D\to X\to C\to D[1]$ for some $C\in \cu_{w\ge 0}$ and $D\in \du_{w\le -1}$.
Clearly, this triangle is just a 
$-1$-weight decomposition of $X$. In particular,
Proposition \ref{pbw}(\ref{igenw0}) (or part \ref{iwd0} of that proposition along with its dual) easily yields 
the following: if we also have $X \in \cu_{[r,m]}$ for $r\le 0 \le m$ 
 then $C\in \cu_{[0,m]}$ and $D \in \cu_{[r,-1]}$.

2. If $w$ is bounded then all  weight structures  compatible with it (for $\du\subset \cu$)  come from additive subcategories of $\hw$ (see Proposition \ref{pbw}(\ref{igen},\ref{iwe})). 
Moreover, in this case the heart ${\underline{Hw}}_{\cu/\du}$  actually equals the essential image of  $\frac { \hw} {\hw_{\du}}$ in
$\cu/\du$ (see Proposition 3.3.3(1) of \cite{bsosnl}).

 On the other hand, to ensure that there exists a weight structure for $\cu/\du$ such that the localization functor is weight-exact it 
 actually suffices to assume that $\du$ is densely generated by some set of elements of $\cu_{[0,1]}$; see Theorem 3.2.2 of \cite{bsnew}
for a more general statement.

\end{rema}

\subsection{On weight complexes and weight 
spectral sequences} 
\label{swss}

We will need certain {\it weight complexes} 
below. 
Applying the results of (\S6 of) \cite{bws}, one can assume that all  the weight complexes we need are given by "compatible" exact functors whose targets are the 
corresponding $K^b(\hw)$.
 Yet (see \S3 of ibid.) one cannot construct canonical weight complex functors satisfying these properties without considering certain "enhancements" for their domains; 
so here we just define weight complexes of  objects and do not treat weight complexes of morphisms in detail. 

\begin{defi}\label{rdwc}
For an object $M$ of $\cu$ (where $\cu$ is endowed with a weight structure $w$) choose some $w_{\le l}M$ (see Remark \ref{rstws}(2)) for all $l\in \z$; then  
connect $w_{\le l-1} M$ with  $w_{\le l} M$ using Proposition \ref{pbw}(\ref{icompl}) (i.e., we consider those 
 unique connecting morphisms that are compatible with $\id_M$). Next, take the corresponding triangles 
 \begin{equation}\label{wdeck3}
  w_{\le l-1} M \to  w_{\le l} M \to M^{-l}[l]\to (w_{\le l-1} M)[1]
 \end{equation}
 (so, we just introduce the notation for the corresponding cones). All of these triangles along with the corresponding morphisms   $ w_{\le l} M\to M$ are called a choice of a {\it weight Postnikov tower} for $M$, whereas the objects $M^i$ along with the morphisms connecting them (obtained by composing the morphisms $M^{-l}\to   (w_{\le l-1} M)[1-l]\to M^{-l+1}$ that come from two consecutive triangles of the type (\ref{wdeck3})) will be denoted by $t(M)$ and  said to be  a choice of a {\it weight complex} for $M$.


\end{defi}

Let us recall some basic properties of weight complexes 
(note that the boundedness of $w$ is only needed in assertions \ref{iwext} and \ref{iwcons} below; actually, a much weaker restriction on $w$ is sufficient for the latter statement  according to Proposition 3.1.8(2) and Theorem 2.3.4(I.1)  of  \cite{bkillw}). 

\begin{pr}\label{pbwcomp}
Let $M\in \obj \cu$, 
where $\cu$ is endowed with a weight structure $w$.

Then the following statements are valid.

\begin{enumerate}
\item\label{iwc0} Any choice of 
$t(M)=(M^i)$ is a complex indeed (i.e., the square of the boundary is zero); all $M^i$ belong to $\cu_{w=0}$.

\item\label{iwc3} $M$ determines its weight complex $t(M)$ up to a homotopy equivalence.
In particular, if $M\in \cu_{w\ge 0}$ (resp. $M\in \cu_{w\le 0}$)  then any choice of 
$t(M)$ is $K(\hw)$-isomorphic to 
a complex with non-zero terms in non-positive (resp. non-negative) degrees only.

\item\label{iwcons} If $t(M)$ is homotopy equivalent to $0$, then $M=0$.

\item\label{iwcex} If $M_0\stackrel{f}{\to} 
 M_1\to M_2$ is a distinguished triangle in $\cu$ then  for any possible choice of  $t(M_0)$ and $t(M_1)$ there exists a choice of $t(M_2)$ that completes them to a distinguished triangle. 

 Moreover, if $M_0\in \cu_{w\ge 0}$ and $M_1\in \cu_{w\le 0}$ then there exists $t(M_2)$ of the form $\dots \to M_0^{-2}\to M_0^{-1}\to M_0^0 \stackrel{f_0}{\to} M_1^0\to M_1^1\to\dots$. That is, one can take any choice of $t(M_1)$ that is concentrated in non-negative degrees and put it in the same degrees of $t(M_2)$, take a "dual choice" of $t(M_0)$, shift it by $[1]$, and put it inside $t(M_2)$ also, whereas $f_0$ is the composed morphism  $M_0^0\to M_0 \stackrel{f}{\to} M_1\to M_1^0$ (the unlabeled morphisms in this row are provided by our construction).


\item\label{iwext}  
If $t(M)$ is homotopy equivalent to a bounded complex $(M'{}^i)$ then $M$  belongs to the extension-closure of the set  $\{M'{}^{-i}[i]\}$.

\item\label{iwcfact}
Let $N\in \cu_{w=0}$, $M\in \cu_{w\ge 0}$; assume that 
a $\cu$-morphism $f:N\to M$ 
  factors through some $L\in \obj \cu$.
 Then for any possible choice of  $L^0$ (i.e., of the zeroth term of $t(L)$) $f$ can be factored through $L^0$.

\item\label{iwcoh} Let $H:\hw\to\au$ be an additive functor, where $\au$ is an 
  abelian category. Choose a weight complex $t(M)=(M^j)$ for each object $M$ of $ \cu$, and denote by 
$\tih(M)$ the zeroth homology of the complex $H(M^{i})$. Then $\tih(-)$ yields a homological functor 
from $\cu$ to $\au$ (that does not depend on the choices of weight complexes for objects); we will call a functor of this type a {\it $w$-pure} one.

\item\label{iwcfunct} Let $\cu'$ be a triangulated category endowed with a weight structure $w'$; let
 $F:\cu\to \cu'$ be a weight-exact functor. Then for any choice of $t(M)$ 
the complex $(F(M^i))$ 
yields a weight complex of $F(M)$ 
with respect to $w'$. Moreover, this observation is 
"compatible with the construction of functors" mentioned in the previous assertion, and is natural with respect to transformations of (weight-exact) functors. 

\item\label{isubc} Let $\bu$ be a full subcategory of $\hw$. Then $M$ belongs to $\lan \bu \ra_{\cu}$ if and only if $t(M)$ belongs to $\lan \bu \ra_{K(\hw)}$. 

\end{enumerate} 

\end{pr}

\begin{proof} 
 Assertions \ref{iwc0}--\ref{iwcex} follow immediately from Theorem 3.3.1 of \cite{bws} (cf. also Proposition 1.3.4 of \cite{bwcomp} for some more detail). 

Assertion \ref{iwext} is given by Corollary 3.3.3(2) of ibid. 

 Assertion \ref{iwcfact} 
 was essentially established in the course of proving Proposition \ref{pbw}(\ref{ifactps}).

Assertion \ref{iwcoh} is 
 given by 
Theorem 2.1.2 of \cite{bwcomp}.

Assertion \ref{iwcfunct} is an immediate consequence of the definition of a weight complex (and of weight-exact functors); see Proposition 1.3.4(10) of   ibid. 

Assertion \ref{isubc} is 
given by Corollary 8.1.2 of \cite{bws}. \end{proof}

\begin{rema}\label{rwc}
1. Moreover, Theorem 3.3.1(VI) of \cite{bws} easily yields that $t$ induces a bijection between the class of isomorphism classes of elements of $\cu_{[0,1]}$ and the corresponding class for $K(\hw)$ (i.e., with the class of homotopy equivalence classes of complexes that have non-zero terms in degrees $-1$ and $0$ only).

2. The term "weight complex" originates from \cite{gs}, where a certain complex of Chow motives $W(X)$ was constructed for a variety $X$ over a characteristic $0$ field. The weight  complex functor of Gillet and Soul\'e can essentially be obtained by composing the "triangulated motivic" weight complex functor $\dmge\to K^b(\chowe)$ (or $\dmgm\to K^b(\chow)$; cf. Definition \ref{dcwh} below) 
 with the functor $\mgc$ of {\it motif with compact support}  (see Propositions 6.3.1 and  6.6.2 and Remark 6.3.2(2) of \cite{mymot}; cf. also Proposition \ref{pmgc} and the proof of Proposition \ref{pgs}(2) below). Note however that in \cite{gs} the so-called contravariant  category of Chow motives is considered, i.e., all arrows point in the opposite direction.

Certainly, our notion of weight complex is much more general. The basics of our theory was developed in \S3 of \cite{bws} (cf. also \S1.3 of   \cite{bwcomp} 
for a  more careful treating of the functoriality of weight complexes). 

\end{rema}


Now  recall  some of the properties 
of  weight spectral sequences 
established in \S2 of \cite{bws}. 

Let $\au$ be an abelian category. In \S2 
of \cite{bws}
for $H:\cu\to \au$ that is either cohomological or homological (i.e., it is either covariant or contravariant, and converts distinguished triangles into long exact sequences) certain {\it weight filtrations} and {\it weight spectral sequences}  (corresponding to $w$) were introduced. 

\begin{defi}\label{dwfil}
Let $H:\cu\to \au$ be a 
covariant functor, $i\in \z$.

1. We will write $H_i$ for the functor $H\circ [i]:\cu\to \au$.

2. Fix a choice of $w_{\le i}M$ and define  the {\it weight filtration} for $H$  by $W_iH:M\mapsto \imm (H(w_{\le i}M)\to H(M))$.

Recall that $W_iH(M)$ is functorial in $M$ (in particular, it does not depend on the choice of  $w_{\le i}M$); see Proposition 2.1.2(1) of ibid.

3. Dually, if  $H$ is a contravariant functor from $\cu$ into $ \au$  then we will write $H^i$ for the composed functor $H^i=H\circ [-i]$, and
 use the notation  $W^i(H)(M)$ for $\imm(H(w_{\ge i}M)\to H(M))$. Respectively, we will use the notation $\grwc^iH(M)$ for the quotient object $W^i(H)(M)/W^{i+1}(H)(M)$

\end{defi}

\begin{pr}\label{pwss}
 1. For a homological $H:\cu\to \au$ and any $M\in \obj \cu$  there exists a spectral sequence $T=T_w(H,M)$ with $E_1^{pq}(T)=H_q(M^{p})$, 
where $M^i$ and the boundary morphisms of $E_1(T)$ come from any choice of $t(M)$.
 $T_w(H,M)$ is   
$\cu$-functorial  in $M$ and in $H$ (with respect to  composition of $H$ with exact functors of abelian categories)  starting from $E_2$. 

 It converges to 
$E_{\infty}^{p+q}=H_{p+q}(M)$ (at least) if $M$ is $w$-bounded.
The step of the filtration given by ($E_{\infty}^{l,m-l}:$ $l\ge n$)  on $H_{m}(M)$ (for some $n,m\in \z$) equals  $(W_{-n}H_{m})(M)$.

2. Dually, if $H$ is a cohomological functor from $\cu$ into $\au$ then for any $M\in \obj \cu$  there exists a spectral sequence $T=T_w(H,M)$ with $E_1^{pq}=H^{q}(M^{-p})$, for $M^i$ and the boundary morphisms of $E_1(T)$ coming from  $t(M)$.
$T_w(H,M)$ converges to $H^{p+q}(M)$ whenever $M$ is $w$-bounded; 
 it is $\cu$-functorial  in $M$ starting from $E_2$.

The step of the filtration given by ($E_{\infty}^{l,m-l}:$ $l\ge n$)  on $H^{m}(M)$ (for some $n,m\in \z$) equals  $(W^{n}H^{m})(M)$.
\end{pr}
\begin{proof}

These statements  are  essentially  contained in Theorems 2.3.2 and  2.4.2  of \cite{bws}, respectively (yet take into account Remark \ref{rstws}(3)!).  
\end{proof}

\begin{coro}\label{cfactor}

Let $M\in \cu_{w \ge 0}$, $N\in \cu_{w=0}$.
Then the following statements are valid.

1. Choose some $t(M)=(M^i)$. Then  $\cu(N,M)$ is isomorphic to the zeroth homology of the complex $(\hw(N,M^i))$.

2.  Let $\du\subset \cu$ be a 
triangulated subcategory of
$\cu$; suppose  that $w$ restricts to a weight structure $w_{\du}$ on $\du$ (see Definition \ref{dwso}(\ref{idrest})).
Assume that a morphism $f\in \cu(N,M)$ vanishes in the 
Verdier quotient $\cu/\du$. Then $f$ factors through some object of $\hw_{\du}$.
\end{coro}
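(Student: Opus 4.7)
The plan is to prove Part~1 by applying the weight spectral sequence of Proposition~\ref{pwss} to the homological functor $H = \cu(N,-): \cu \to \operatorname{Ab}$; convergence is automatic because $M$ is bounded (as $w$ is bounded by standing assumption). This produces a spectral sequence with $E_1^{pq} = \cu(N, M^p[q])$ abutting to $H_{p+q}(M) = \cu(N, M[p+q])$, and Part~1 amounts to identifying $E_2^{00}$ with both $\cu(N,M)$ and the zeroth homology of $\hw(N, M^{\bullet})$.

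To control the $E_1$-page I would use two vanishings. Since $M \in \cu_{w\ge 0}$, Proposition~\ref{pbwcomp}(\ref{iwc3}) lets me choose $t(M)$ concentrated in non-positive degrees, killing $E_1^{pq}$ for $p \ge 1$. Since $N \in \cu_{w \le 0}$ and $M^p[q] \in \cu_{w = q} \subset \cu_{w \ge 1}$ for $q \ge 1$, orthogonality kills $E_1^{pq}$ for $q \ge 1$. So the $E_1$-page is supported in $p, q \le 0$. On the anti-diagonal $p + q = 0$ only the $(0,0)$ spot can contribute. The filtration on $\cu(N,M)$ satisfies $W_{-1}H_0(M) = 0$ (choose $w_{\le -1}M = 0$) and $W_0 H_0(M) = H_0(M)$ (because $\cu(N, w_{\ge 1}M) = 0$ by orthogonality forces $\cu(N, w_{\le 0}M) \twoheadrightarrow \cu(N, M)$); hence $\cu(N,M) = E_\infty^{00}$. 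The higher differentials $d_r$ into or out of $(0,0)$ for $r \ge 2$ land in spots that already vanish on $E_1$, so $E_2^{00} = E_\infty^{00}$. On the other hand $E_2^{00}$ is visibly the cohomology of $\cu(N, M^{-1}) \to \cu(N, M^0) \to \cu(N, M^1) = 0$ at position zero, i.e.\ the zeroth homology of $\hw(N, M^{\bullet})$.

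For Part~2 I would combine the standard Verdier quotient principle with Proposition~\ref{pbw}(\ref{ifactps}). The assumption that $f$ vanishes in $\cu/\du$ means $f$ factors through some $P \in \obj\du$ (this is the "roofs" description of morphisms in a Verdier quotient). Now $N \in \cu_{w=0} \subset \cu_{w \le 0}$ and $M \in \cu_{w \ge 0} = \cu_{w \ge -n}$ with $n = 0$, so Proposition~\ref{pbw}(\ref{ifactps}) refines this factorization to one through an element of $\du_{[-n,0]} = \du_{[0,0]} = \hw_{\du}$, as required.

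The main (though rather mild) obstacle is the bookkeeping for the $E_2$-degeneration in Part~1: one must check simultaneously that the higher differentials entering and leaving $(0,0)$ vanish and that the weight filtration on $\cu(N,M)$ leaves $E_\infty^{00}$ as its unique nontrivial graded piece. Part~2 is essentially immediate once the vanishing of $f$ in the Verdier quotient is translated into a factorization through an object of $\du$ and Proposition~\ref{pbw}(\ref{ifactps}) is invoked with $n=0$.
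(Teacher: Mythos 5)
Your proposal is correct and follows essentially the same route as the paper: for Part 1 the paper also invokes the weight spectral sequence for $\cu(N,-)$ together with the vanishings coming from $M^i=0$ for $i>0$ and orthogonality (your treatment of the filtration and $E_2$-degeneration is just a more explicit version of the paper's terse "the spectral sequence yields the result"). For Part 2 the paper cites Proposition~\ref{pbwcomp}(\ref{iwcfact}) where you cite Proposition~\ref{pbw}(\ref{ifactps}) with $n=0$, but the paper itself observes that the former "was essentially established in the course of proving" the latter, so these are interchangeable.
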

\begin{proof}
1.  We may assume that $M^i=0$ for $i>0$ (see 
Proposition \ref{pbwcomp}(\ref{iwc3}); 
note that making a choice here does not affect the homology of the complex $(\hw(N,M^*))$), whereas 
clearly $N\perp M^i[-i]$ for all $i<0$, $N\perp M^i[-i-1]$ for all $i<-1$. 
Hence  the spectral sequence $E_1^{pq}=\cu(N,M^p[q])\implies \cu(N,M[p+q])$ 
(this is the weight spectral sequence for the homological functor $\cu(N,-):\cu\to\ab$; see  Proposition \ref{pwss}) gives the result.

2. The Verdier localization theory yields that $f$ factors through an object of $\du$. Hence the assertion follows from Proposition \ref{pbwcomp}(\ref{iwcfact}).\end{proof}

\section{On motives, their weights,  and various (complexes of) Chow groups} 
\label{smotr}

In this section we 
 study 
 several motivic categories, Chow weight structures for them, and certain  (complexes of) Chow groups.

In \S\ref{smotnot} we recall some basics on Voevodsky motives with coefficients in a $\zop$-algebra $R$ and 
 introduce some notation. 

In \S\ref{swchow} we introduce and study in detail 
  Chow weight structures on various versions of $\dmger$. 

In \S\ref{stestf} we associate to extensions of $k$ and complexes of Chow motives  the homology of complexes consisting of their Chow groups (of fixed dimension and "highness"). We prove several properties of these homology theories (and of motivic homology); however, most  
 of them appear to be standard. 

\subsection{Some notation and basics on Voevodsky motives}\label{smotnot}

Below $k$ will denote a perfect base field of characteristic $p$; we set $\zop=\z$ if $p=0$.

We will use the term $k$-variety for reduced separated  (not necessarily integral) schemes of finite type over $\spe k$; we will write $\var$ for the set of all $k$-varieties. 
 Respectively, the set of smooth varieties (resp. of smooth projective varieties) over $k$ will be denoted by $\sv$ (resp. by $\spv$), and we do not assume these schemes to be connected. 

 We will write $\pt$ for the point $\spe k$ (considered as a variety over itself).

Recall that (as was shown in \cite{vbook} and \cite{bev}; cf. also \cite{cdint},  and  \cite{bokum}) 
one can do the theory of motives with  coefficients in an arbitrary commutative associative ring
with a unit $R$. 
  One obtains a 
	tensor triangulated category  $\dmger$ (along with its embeddings into $\dmgmr$ and into $\dmer$; see below) that satisfies all the basic properties of the usual Voevodsky's motives (i.e., of those with integral coefficients for $p=0$).  
	 Moreover, we  recall that %
all of the results that were stated in \cite{1} in this case are currently known for  $\zop$-motives (also if) $p>0$; see \cite{kellyast} (along with \cite{kellyth}), \cite{degdoc}, and \cite{bzp}. Consequently, these  properties are also valid for $R$-linear motives whenever $R$ is a $\zop$-algebra, and we will apply some 
 statements of this sort below without 
 further mention. We will mostly be interested in the cases $R=\zop$ and $R=\q$.

A basic part of the construction of motives  is a functor $\mgr$ 
 ($R$-motif) from the category of smooth $k$-varieties into $\dmger$. Actually, $\mgr$ extends to the category of all $k$-varieties (see \cite{1} and \cite{kellyast}); yet we will  mention this extension just a few times. 

We will write just $R$ for $\mgr(\pt)$.

$\chower\subset\dmger$ 
 will denote the category of $R$-linear effective homological Chow motives (considered as a full subcategory
of  $\dmger$; we will also assume it to be strict). 
   For $c\ge 0$ and $M\in \obj \dmger$ we will write $M\lan c\ra$ for the tensor product of  $M$ by the $c$th tensor power of the Lefschetz 
	 motif $\mathbb{L}$ (recall that the latter is characterized by the condition $\mgr(\p^1)\cong \lmb\bigoplus R$). 
	The relation of this notation to the notation for twists in \cite{1} is as follows: $M\lan c\ra=M(c)[2c]$ and $M(c)= M\lan c\ra[-2c]$.

Next,  recall that the twist functor $-\lan  1 \ra$ is a full  embedding of $\dmger$ into itself (this fact is often called the Cancellation theorem) that restricts to an embedding of $\chower$ into itself. 
  $-\lan  1 \ra$ extends to an autoequivalence of the corresponding category $\dmgmr=\dmger[\lan -1\ra]$ (i.e., we invert  the functor $-\lan 1\ra=-\otimes \lmb$); note that this category contains $\dmger$ together with $\chowr=\chower [\lan -1\ra]$. Moreover, $\dmgmr$ is equipped with an exact Poincar\'e duality functor $\widehat{-}:\dmgmr\to\dmgr^{op}$ 
(constructed in \cite{1} for $p=0$; see Theorem 
 5.3.18 of \cite{kellyast} or \cite{bzp} for the positive characteristic case) that sends $\mgr(P)$ into $\mgr(P)\lan-d\ra$ if $P$ is smooth projective everywhere of dimension $d$. It restricts to the "usual" Poincar\'e duality for $\chowr$.

 Both $\dmger$ and $\dmgmr$ are Karoubian by definition.

An important property of motives is the  Gysin distinguished triangle 
(see Proposition 4.3 of \cite{degdoc} that establishes its existence in the case of an arbitrary characteristic $p$).
For a closed embedding $Z\to X$ of smooth varieties with $Z$ is everywhere of codimension $c$ in $X$, it has the following form:
\begin{equation}\label{gys}
\mgr(X\setminus Z)\to \mgr(X)\to \mgr(Z)\lan c\ra\to \mgr(X\setminus Z)[1].
\end{equation} 

\begin{rema}\label{rhomr}
Some of our formulations below will mention the homotopy $t$-structure for the Voevodsky motivic complexes. Respectively we recall that the methods of \cite{1} yield an embedding $\dmger$  into a certain category $\dmer$, and the latter can be endowed with the so-called homotopy $t$-structure $\thomr$ (which gives a filtration on $\dmger\subset \dmer$ that we will sometimes call  
the {\it motivic  connectivity} one). Furthermore, 
$\dmer$ 
 is a full subcategory of the triangulated category $\dmerb$ of  unbounded  motivic complexes that is closed with respect to arbitrary coproducts. The $t$-structure $\thomr$ can be extended to $\dmerb$ (\S3.1.2 of \cite{bev} or  Corollary 5.2 of  \cite{degmod}), and 
 the corresponding class  $\dmerb{}^{\thomr\le 0}$ equals $\dmer^{\thomr\le 0}$; it also equals the smallest extension-closed subclass of $\obj \dmerb$ that  is closed with respect to coproducts and  contains  
$\mgr(X)$ for all smooth $X/k$. 

We will give another description of $\dmerb{}^{\thomr\le 0}$ (in terms of Chow motives) in the proof of Proposition \ref{pcwh}(6) below. 
\end{rema}

Sometimes we will have to consider base fields distinct from $k$. For a field extension $L/K$ and a 
 motif $M$ over $K$ we 
   will use the notation $M_L$ for its image with respect to the corresponding base change functor. We will apply this convention for arbitrary $K$ and $L$ when treating Chow motives, and restrict ourselves to perfect fields when considering Voevodsky motives. 

\subsection{On Chow weight structures 
on various 
motivic categories}
\label{swchow}

Now we note that the arguments used in the construction of the {\it Chow weight structures} in \cite{bws} and \cite{bzp} can be easily applied to $R$-motives (for any $\zop$-algebra $R$). 

\begin{pr}\label{pwchow}
\begin{enumerate}
\item\label{ip1}
 There exists a bounded weight structure $\wchow$ on $\dmger$ (resp. on $\dmgmr$) whose heart 
equals $\chower$ (resp. $\chowr$; recall that we assume these subcategories of $\dmgmr$ to be strict). These weight structures for $\dmger$ and $\dmgmr$ are compatible (i.e., the embedding $\dmger\to \dmgmr$ is weight-exact).

Moreover, $\dmger_{w\le 0}$ (resp. $\dmgr_{w\le 0}$) is the extension-closure of the class $\cup_{i\le 0}\obj \chower[i]$ in $\dmger$ (resp. of the class $\cup_{i\le 0}\obj \chowr[i]$ in $\dmgmr$); $\dmger_{w\ge 0}$ (resp. $\dmgr_{w\ge 0}$) is the extension-closure of   $\cup_{i\ge 0}\obj \chower[i]$ in $\dmger$ (resp. of  $\cup_{i\ge 0}\obj \chowr[i]$ in $\dmgmr$).

\item\label{ip2}
 If 
$U\in \sv$ and $\dim U\le m$  then $\mgr(U)\in \dmger_{[-m, 0]}$.

\item\label{ip3}
If $U\to V$ is an open dense 
embedding of  smooth varieties, then  the 
 motif $\co(\mgr(U)\to \mgr(V))$ belongs to $\dmger_{\wchow\le 0}$. 

\item\label{ip4}
 Let $k'$ be a perfect field extension of $k$. Then the extension of scalars functors $\dmger
 \to \dmger(k')$ and   $\dmgmr
 \to \dmgmr(k')$ are weight-exact with respect to the corresponding Chow weight structures. 

\item\label{ip5}
 For any $n\in \z$ the functor $-\lan n\ra$ is weight-exact on $\dmgmr$; the same is true for $\dmger$ if $n\ge 0$.

\item\label{ip6}
 If $M\in \obj\dmger\lan n\ra $, $n\in \z$,  then there exists a choice of  its weight complex $t(M)=(M^i)$ 
(with respect to the Chow weight structure for $\dmger$) with $M^i\in \obj\chower\lan n\ra$. 
\end{enumerate}
\end{pr}
\begin{proof} 
The first three assertions were stated in  Theorem 2.2.1 of \cite{bzp} in the case $R=\zop$. The proof 
carries over to the case of a general $R$ without any difficulty; cf. Proposition 2.3.2 of \cite{bonivan}.

The  
 remaining statements are easy as well. Assertions \ref{ip4} and \ref{ip5} are immediate from Proposition \ref{pbw}(\ref{iwe}), whereas assertion \ref{ip6} follows from the previous one by Proposition \ref{pbwcomp}(\ref{iwcfunct}).  \end{proof}

Now we deduce some simple 
 corollaries from this proposition. Their formulation requires the following definition, that will be very important for us below.

\begin{defi}\label{deffdim}

1. For $M\in \obj \dmger$ and   a non-negative integer $r$ we will say that $M$ is {\it $r$-effective} if it has the form $N\lan r\ra$ for some $N\in \obj \dmger$. 
 
2. We will say that the {\it dimension} of $M$ is not greater than an integer $m$ 
 if $M$ belongs to $\lan\mgr(P):\ P\in\spv,\ \dim P\le m \ra$. 

 The (full)  subcategory of $\dmger$ (resp. of $\chower$) of motives of dimension at most $ m$  will be denoted by $d_{\le m}\dmger$ (
resp., by $d_{\le m}\chower$; consequently, $d_{\le m}\dmger=d_{\le m}\chower=\ns$ if $m<0$).

3. We  will write $\dmgrr$ for the Verdier quotient $\dmger/\dmger\lan r+1\ra$; $l^r$ will denote the corresponding localization functor.

4. 
 We will also need the following extension of this notation:    $\chower\lan +\infty \ra= \dmger\lan +\infty \ra=\ns$, $l^{+\infty}=l^{+\infty-1}$ will denote the identity functor for $\dmger$. Respectively, $\dmgr^{+\infty}=\dmger$, and any subclass of objects of  $\dmger\lan +\infty \ra$ is zero. 

5. 
   If $K$ if a field then $K\perf$ will denote the perfect closure of $K$.

6. 
  If $M$ is an object of $\chower$ or of $\dmger$ and $j,l\in \z$ then we define $\chowm_{j}(M,R, l) $ (resp., $\chowm_{j}(M,R)$)  as $\dmgmr(R\lan j \ra [l],M)$ (resp., $\dmgmr(R\lan j \ra,M)$; cf. Theorem 5.3.14 of \cite{kellyast} where these groups are related to the corresponding Chow-Bloch groups of varieties). 
	More generally, for an extension $K/k$ we set  
 $\chowm_{j}(M_K,R, l) =\dmgmr(K\perf)(R\lan j \ra [l],M_{K\perf})$ and  $\chowm_{j}(M_K,R) =\dmgmr(K\perf)(R\lan j \ra ,M_{K\perf})$ (see the end of \S\ref{smotnot}). 

\end{defi}

Note that the last part of this definition can be naturally extended to $\dmer$. When we will use this notation for general $(l,M)$, we will usually take $j=0$ in it.

\begin{coro}\label{cchows}
Let $c\ge 1$, $m\ge 0$.

1. The Chow weight structure restricts to a weight structure $w_c$ on the category  $\dmger\lan c \ra$ (see Definition \ref{dwso}(\ref{idrest})).
 Moreover, $\dmger_{w_c\le 0}=\dmger_{\wchow\le 0}\lan c \ra$ and $\dmger_{w_c\ge 0}=\dmger_{\wchow\ge 0}\lan c \ra$.

2. An object $M$ of $\chower$ is $c$-effective (as an object of $\dmger$) if and only if it can be presented as $N\lan c\ra$ for 
$N\in \dmger_{\wchow=0}$.
 
3.  The Chow weight structure also restricts to a weight structure on the category $d_{\le m}\dmger$ (that will also be denoted  by $\wchow$).  The  heart of the latter consists of 
all objects of $\chower$ inside 
 $d_{\le m}\dmger$; these motives are exactly  the retracts of $\mgr(P)$ for smooth projective $P/k$ of dimension at most $ m$.

4. If $U\to V$ is an open embedding of  smooth varieties such that $V\setminus U$ is everywhere of codimension $c$ in $V$, $\dim V\le m$,  then  $\co(\mgr(U)\to \mgr(V))\in (d_{\le m-c}\dmger)_{\wchow\le 0}\lan c \ra$. 

5. If $V$ is a smooth $k$-variety of dimension at most $m$ then $\mgr(V)$ is an object of $ d_{\le m}\dmger$. 


6. 
The Karoubi-closures of the classes $l^{c-1}(\dmger_{\wchow\le 0})$ and  $l^{c-1}(\dmger_{\wchow\ge 0})$ in $\dmgrco$ give a bounded weight structure $\wcho^{c-1}$ on  this category. 
\end{coro}
\begin{proof}
1. Note that $\dmger\lan c \ra$ is precisely the subcategory of $\dmger$ densely generated by $\obj\chower\lan c\ra$. Hence Proposition \ref{pbw}(\ref{igen},\ref{igenw0}) yields the result 
immediately.

2. This is an immediate consequence of the "moreover" part of the previous assertion (since ${}-\lan c\ra$ gives an equivalence of $\dmger$ with  $\dmger\lan c \ra$). 

3. 
The statement is immediate from Proposition \ref{pbw}(\ref{igen}) (once again). 

4. There clearly exists a chain of open embeddings $U=U_0\to U_1\to U_2\to \dots\to U_m=V$ (for some $m\ge 1$) such that  $U_{i}\setminus U_{i-1}$ are smooth for all $1\le i\le m$. Hence the distinguished triangles (\ref{gys}) along with 
 Corollary \ref{cchows}(5) imply (by induction on $m$)  that   $\co(\mgr(U)\to \mgr(V))\in \obj (d_{\le m-c}\dmger)\lan c \ra$. Thus it remains to 
combine the equality  $$((d_{\le m-c}\dmger)\lan c \ra)_{w_c \le 0}=((d_{\le m-c}\dmger)_{\wchow\le 0})\lan c \ra$$ (cf. assertion 1 and its proof) with 
Proposition \ref{pwchow}(\ref{ip3}).

5. The arguments used for the proof of \cite[Theorem 2.2.1(1)]{bzp} give the result without any difficulty (cf. Corollary 1.2.2 of ibid. and Proposition \ref{pirsir}(3) below).

6. According to assertion 1, we can apply Proposition \ref{ploc}(1,4) to obtain the result in question. 
\end{proof}

\begin{rema}\label{rwceff}
Let $l\in \z$ and $c\ge 1$, and assume that there exists a choice of  $\wchow_{\le l} M$ that belongs to $\obj \dmger\lan c \ra$. 

1. Proposition \ref{pbw}(\ref{itrun}) implies that we can choose 
 $\wchow_{\le l-1} M$ 
  that belongs to $ \obj \dmger\lan c \ra$.  Then  the corresponding choice  (see (\ref{wdeck3})) of $M^{-l}$ clearly  belongs to $\dmger_{\wchow=0}$ as well as to $ \obj \dmger\lan c \ra$ (since $\dmger\lan c \ra$ is a full triangulated subcategory of $\dmger$;  see Proposition \ref{pbw}(\ref{igenw0})). Thus $M^{-l}\in\dmger_{\wchow=0}\lan c\ra$.

2. Now suppose  $M\in \dmger_{\wchow\le l}$. Then  $M$ is a retract of $\wchow_{\le l} M$ (since $\id_M$ factors through $\wchow_{\le l} M$ by Proposition \ref{pbw}(\ref{ifactp})). Thus $M$ is an object of $\dmger\lan c \ra$ as well.
\end{rema}

 Let us prove some more lemmas that will be very important for us below.

\begin{pr}\label{pcrulemma}

Let $m,j\ge 0$, $c\ge 1$, $U,V\in \sv$, $Q\in \spv$, $M\in \obj \chower$.

\begin{enumerate}
\item\label{iсru0}
If $U$ is of constant dimension $d$ then the group $\dmger(\mgr(U)\lan j \ra,  \mgr (Q))$ is
naturally isomorphic to 
the group 
$\chow_{d+j}(U\times Q,R)$  of  $R$-linear cycles of dimension $d+j$ modulo rational equivalence.

\item\label{iсru1}
 Let $u:U\to V$ be an open embedding 
 such that $V\setminus U$ is everywhere of codimension at least $c$ in $V$ and $\dim V\le m$.
Let  $N\in \dmgr_{\wchow\ge 0}$, and assume that a morphism $g\in \dmgmr(\mgr(V)\lan j\ra,N)$ vanishes when composed with $\mgr(u)\lan j\ra$. Then 
there exists a smooth projective $P/k$ of dimension at most $ m-c$ such that
$g$ factors through 
$\mgr(P)\lan j+c\ra$.

\item\label{iсru2}
 If $Q$ is of dimension at most $m$ then any morphism    $q:\mgr(Q)\to M\lan c \ra$ 
can be factored through $\mgr(P)\lan c\ra$
for some smooth projective $P/k$ of dimension at most $ m-c$. Moreover,  there exists an open embedding $w:W\to Q$ such that $Q\setminus W$ is (everywhere) of codimension at least $c$ in $Q$ and the composition $q\circ \mgr(w)$ vanishes.

\item\label{iсru3}
  $\obj d_{\le m}\dmger\cap \obj \dmger\lan c\ra=\obj (d_{\le m-c}\dmger)\lan c \ra$.

In particular, if 
 $M\lan c\ra$ is of dimension at most $ m$ (in $\dmger$), 
then $M$ is of dimension at most $ m-c$ (thus it is zero if $c>m$).

\item\label{iсru4}
 Let 
 $g\in \dmger(\mgr(V)\lan j\ra,M)$. Assume that $V$ is connected 
	and the fibre of  $g$  (considered as a rational equivalence class of cycles in the corresponding product of smooth  varieties; see assertion \ref{iсru0}) over the generic point of $V$ 
 vanishes (i.e., its image in the group $\chowm_j(M_{k(V)})$ is zero). Then the morphism $g$ can be factored through an object of $\chower\lan j+1\ra$.

\item\label{iсru5}
 If $Q$ is connected then $\dmgrj (\mgr(Q)\lan j \ra,M)\cong \chowm_j^{R}  (M_{k(Q)})$. 

\item\label{iсru6}
Assume that 
$\dim(Q)+j\le r$ 
and that the dimension of $ M$ (see Definition \ref{deffdim}(2)) is not greater than $r$. 
  Then the group $\chowm_j^{R} (M_{k(Q)})$ 
is isomorphic to the group of morphisms  from $\mgr(Q)\lan j \ra$ into $M$ in the localization 
$d_{\le r}\dmger/((d_{\le r-j-1}\dmger)\lan j+1\ra)$ (as well).
\end{enumerate}
\end{pr}
\begin{proof}

\ref{iсru0}. This statement was established in \cite{1} in the case $p=0$; in the general case it follows immediately from the formulas  (6.4.2) and (6.7.1) of \cite{bev}; cf. Corollary 6.7.3 of ibid. 

\ref{iсru1}. Clearly, $g$ can be factored through $\co(\mgr(u))\lan j \ra$. Next, Corollary \ref{cchows}(4) implies that $\co(\mgr(u))\lan j \ra\in \dmger_{\wchow\le 0}\lan j+c\ra$. Hence for  $\co(\mgr(u))=N'\lan c\ra$ we can take
$\wchow_{\ge 0}(\co(\mgr(u))\lan j\ra)$ to be equal to $ (\wchow_{\ge 0}N')\lan j+c\ra\in \obj\chower\lan j+c\ra$ (see Proposition \ref{pbw}(\ref{iwd0}). 
 Hence applying part \ref{ifactp} of that proposition  we conclude the proof.

\ref{iсru2}. 
Let $Q=\sqcup Q_i$ be the decomposition of $Q$ into 
 connected components, whose dimensions will be denoted by $m_i$;   clearly, $m_i\le m$.
 Assume that  $M$ is a retract of $\mgr(S)$ for some smooth projective $S/k$. By the classical theory of Chow motives (cf. assertion \ref{iсru0}), the morphism  $q$ 
 is supported on subvarieties of dimension $m_i-c$ in $Q_i\times S$. Hence there exists an open $W\subset Q$ such that $Q\setminus W$ is everywhere of codimension at least  $ c$ in $Q$ and the "restriction" of $q$ 
 to $W$ vanishes. 
 Hence $q\circ \mgr(w)=0$ according to assertion \ref{iсru0}, and assertion \ref{iсru1} implies that  
  $q$ factors through some $\mgr(P)\lan c\ra$ for a smooth projective $P/k$ of dimension at most $ m-c$. 

\ref{iсru3}.
The first part of the assertion follows immediately from 
 Theorem 2.2 of  \cite{binters} (see also Remark 2.3(2) of ibid.). 

To prove the second part it suffices to note that all 
 motives in the heart of  $d_{\le m-c}\dmger\lan c \ra$ are retracts of $\mgr(P)\lan c\ra$ for some smooth projective $P/k$ of dimension at most $ m-c$ (see Corollary \ref{cchows}(1,3)), and apply the  Cancellation theorem.

\ref{iсru4}. 
We can certainly assume that $M$ is a retract of a motif of a smooth projective connected variety of some dimension $d\ge 0$. The "continuity" of 
 the Chow functor of codimension $d-j$ cycles (cf. also Proposition \ref{pvan}(1) and  Lemma 
3.4 of \cite{vialmotab}) 
yields the existence of an open dense embedding $w:W\to V$ such that  $g$ vanishes (i.e., it is rationally equivalent to zero if considered as an algebraic cycle) over $W$ as well. Hence assertion \ref{iсru1} yields the result.  
 
\ref{iсru5}. Denote $\dim Q$ by $d$.
Similarly to the proof of the previous assertion, we have $\dmger(\mgr(Q)\lan j \ra,M)\cong \chowm_{j+d}(\mgr(Q)\otimes M,R)$. We obtain a natural surjective homomorphism $$\dmger(\mgr(Q)\lan j \ra,M)\cong \chowm_{j+d}(\mgr(Q)\otimes M,R)\to \chowm_j(M_{k(Q)},R).$$ By Proposition \ref{ploc}(3), the natural homomorphism  $\dmger(\mgr(Q)\lan j \ra,M)\to \dmgrj (\mgr(Q)\lan j \ra,M)$
is surjective as well. Thus we should compare the kernels.

According to the previous assertion, the second of these kernels consists exactly of morphisms that can be factored through $\chower \lan j+1\ra$. Next, (the rational equivalence class of cycles representing) any morphism of the latter sort vanishes in   $\chowm_j(M_{k(Q)},R)$ for simple dimension reasons (cf. Proposition \ref{pvan}(2) below).
 It remains to note that any morphism belonging to 
 $\ke (\dmger(\mgr(Q)\lan j \ra,M)\to \chowm_j^{R}(M_{k(Q)}))$ can be factored through an object of $\chower\lan j+1\ra$ according to the previous assertion.

\ref{iсru6}. The chain or arguments used for the proof of the previous assertion 
  can easily be adjusted to yield the result. \end{proof}
 
\begin{rema}\label{rfactp}
1. The proof of (part \ref{iсru4}) of the proposition uses an abstract version of the well-known {\it decomposition of the diagonal} arguments (cf. Proposition 1 of \cite{blosri}). 
The "usual" way to construct the factorization in question (see Proposition 3.5 of \cite{vialmotab} and Lemma 3 of \cite{gorgul}) is to resolve the singularities of 
 $V\setminus W$. Yet it is somewhat difficult to apply this more explicit method if $p>0$ (at least, for $\zop$-coefficients). 
Moreover, our reasoning is somewhat shorter than the one of loc. cit. (given 
the 
 properties of  Chow weight structures that are absolutely necessary for this paper anyway).

2. In the case $R=\q$ the "in particular" part of Proposition \ref{pcrulemma}(\ref{iсru3}) was established in \S3 of \cite{vialmotab} (see Remark 3.11 of ibid.). The general case of the assertion is completely new.

3. The 
 idea of studying $\dmgrj$ and 
  the formulation of part 5 of the proposition was inspired by 
	Theorem 3.2.2(f) of \cite{kabir} (where our assertion was established in the case $j=0$).
\end{rema}

\subsection
{On 
 complexes of Chow groups over various fields}\label{stestf}
 
 We  start with some simple definitions.

\begin{defi}\label{dhcho}
Let $K$ be a  field.
\begin{enumerate}
\item\label{idh3}
 We will say that  $K$ is {\it essentially finitely generated} if it is the perfect closure of a field that is finitely generated
over its prime subfield. 

\item\label{idh5}
We  call $K$  a {\it universal domain} if it is algebraically closed and of infinite transcendence degree over its prime subfield.

\item\label{idh6}
We will say that a  
 field $K_0$ is a {\it field of definition} for an object $M$ of $\dmger$ (resp. of  $K^b(\chowr)$) if 
it is a part of a quintuple $(K_0,\ k_0,\ i,\ M_0,\ f)$
where $k_0$ is a perfect subfield of $K_0$, $i$ is an  embedding $k_0\to k$, 
   $M_0\in \obj\dmger(k_0)$  (resp. $M_0\in \obj K^b(\chowr(k_0))$), and $f$ is an isomorphism  
	$M_{k} \to M$.

\item\label{idh7}
 We call $K$  a {\it rational extension} of $k$ if $K\cong k(t_1,\dots,t_n)$ for some $n\ge 0$.

\item\label{idh8}
 We will say that $K$ is a {\it function field} over $k$ if $K$ is a finite separable extension of a rational extension $K'$ of $k$ (thus it is the function field of some smooth connected variety $V/k$); we will call the transcendence degree of $K/k$ the {\it dimension} of $K$ over $k$.
\end{enumerate}
\end{defi}

\begin{rema}\label{rfdef}
Fields of definition for $M$ (more precisely, the corresponding quintuples) obviously form a category if we define a morphism from  $(K_0,\ k_0,\ i,\ M_0,\ f)$ into $ (K'_0,\ k'_0,\ i',\ M'_0,\ f')$ to be a couple as follows: a field embedding $K_0\to K_0'$ that induces an embedding $k_0\to k_0'$ that is compatible with $i$ and $i'$, and  an isomorphism $M_0'\cong M_{0,k'_0}$ that is compatible with $(f,f')$. 

Clearly, for any field of definition of $M$ as above any field embedding $K_0\to K_0'$ makes $K_0'$ a field of definition of $M$ (with $k_0'=k_0$) and also gives a morphism of these fields of definition. Consequently, it is usually sufficient to specify $K_0$ only.



\end{rema}

\begin{pr}\label{pvan}
Let $j,l\in \z$, $d,r\ge 0$.
Then the following statements are valid.

1. Let 
$N\in \obj \chowr$. 
 Then  $$\chowm_{j}(N_K,R,l)\cong \dmgmr(K)(\widehat{N}_K,R\lan j \ra[-l]) 
 $$  (see Definition \ref{deffdim}(6)) for any perfect 
 field $K/k$, where $\widehat{N}$ is the Poincar\'e dual of $N$ (in $\chowr\subset \dmgmr$).
  
2. For any $N\in \obj \chower$ and any 
 field $K/k$ we have   $\chowm_{j}(N_K\lan r\ra,R,l)=\ns$ if $j-r+l<0$. 

3. For an object $N$ of $ \dmger$ (or of $ \dmer$) we have $N\in \dmer^{\thomr \le 0}$ (see 
Remark \ref{rhomr}) if and only if  
 $\chowm_{0}(N_K,R,l)=\ns$ for all $l<0$ and all 
function fields  $K/k$. 

Moreover, for any $r\ge 0$ these conditions are equivalent to the vanishing of   $\chowm_{-r}(N_K,R,l+r)$ for all $l<0$ and all 
function fields  $K/k$. 

4. Any object  of $\dmger$ and of  $K^b(\chowr)$ possesses an essentially finitely generated field of definition.

\end{pr}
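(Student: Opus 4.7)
The plan is to verify the four parts in order, relying on the duality and cancellation theorems in $\dmgmr$ together with the standard characterization of the homotopy $t$-structure via function fields.

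For part 1, apply the Poincar\'e duality equivalence $\widehat{-}\colon \dmgmr \to \dmgmr^{op}$ recalled in \S\ref{smotnot} to the defining Hom set; since $K$ is perfect and $\widehat{-}$ commutes with field extension, tensor product and shift, one computes $\widehat{R(j)[2j+l]}=R(-j)[-2j-l]$ from $\widehat{R}=R$, $\widehat{\lmb}=\lmb^{-1}$ and $\widehat{X[i]}=\widehat{X}[-i]$, which gives the stated isomorphism. For part 2, first use cancellation to write $\chow_{j,l}^{R,K}(N\lan r\ra)=\chow_{j-r,l}^{R,K}(N)$, then apply part 1 and reduce to $N=\mgr(Y)$ for a smooth projective $Y/k$ of pure dimension $d$ (every effective $N$ being a retract of such). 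Since $\widehat{\mgr(Y)}=\mgr(Y)\lan -d\ra$, the group identifies with a retract of $CH^{d+r-j}(Y_K;l)$, which vanishes when $l<0$ or when $d+r-j>d$, i.e., when $j<r$. The hypothesis $j-r+l<0$ forces at least one of these alternatives.

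For part 3, begin with the $r=0$ case. The condition $N\in\dmer^{\thomr\le 0}$ is equivalent to the vanishing of the homology sheaves $h_i(N)$ for all $i<0$. Each $h_i(N)$ is a homotopy invariant sheaf with transfers, and Voevodsky's standard theorem asserts that such a sheaf vanishes if and only if it vanishes on every function field $K/k$. Continuity at essentially smooth stalks identifies $h_i(N)(K)$ with $\dmer(K)(R[i],N_K)=\chow_{0,i}^{R,K}(N)$, yielding the equivalence. For the moreover, cancellation gives
\[
\chow_{-r,l+r}^{R,K}(N)=\dmgmr(K)(R(-r)[l-r],N_K)\cong\dmer(K)(R[l-r],N_K(r))\cong h_{l-r}(N(r))(K),
\]
so the stated vanishing for all $l<0$ and function fields $K$ amounts to $N(r)\in\dmer^{\thomr\le -r}$, equivalently $N(r)[r]\in\dmer^{\thomr\le 0}$. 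Since $-\otimes R(1)[1]$ is $t$-exact on $\dmer$ (a basic property of $\thomr$) and fully faithful by cancellation, it preserves and reflects membership in $\dmer^{\thomr\le 0}$; hence the condition is equivalent to $N\in\dmer^{\thomr\le 0}$.

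For part 4, exploit the boundedness of the Chow weight structure on $\dmger$: any $M\in\obj\dmger$ admits a weight Postnikov tower (Definition \ref{rdwc}) assembled from finitely many objects $N_i\in\obj\chower$ via finitely many morphisms. Each $N_i$ is a retract of $\mgr(X_i)$ for some smooth projective $X_i/k$, cut out by an idempotent cycle class; morphisms between such motives are likewise cycle classes. Varieties, idempotents and correspondences each involve only finitely many polynomial relations, hence are defined over a finitely generated subfield of $k$; the compositum of the finitely many subfields needed, together with its perfect closure, gives the desired essentially finitely generated $k_0$. The case $M\in\obj K^b(\chowr)$ is a direct specialization, since complexes involve only finite data to begin with. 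The \emph{main obstacle} is the moreover in part 3: it rests on the nontrivial $t$-exactness of $-\otimes R(1)[1]$ on $\dmer$, a cornerstone property of Voevodsky's homotopy $t$-structure; the remaining parts are largely routine given the duality, cancellation and continuity already recalled.
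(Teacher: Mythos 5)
Your overall strategy matches the paper's proof (Poincar\'e duality for part 1, dimension count for part 2, characterization of $\thomr$ via function fields for part 3, continuity/finite-generation for part 4), but there is a genuine error in the reasoning for part 2 that you should fix.

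In part 2 you reduce, correctly, to showing that $CH^{m}(Y_K;l)$ vanishes for $m=d+r-j$ and $Y$ smooth projective of pure dimension $d$. You then assert that this group ``vanishes when $l<0$ or when $d+r-j>d$, i.e., when $j<r$,'' and observe that $j-r+l<0$ forces one of these two alternatives. The second alternative is a \emph{false} vanishing criterion: for $l\ge 1$ the higher Chow groups $CH^{m}(Y;l)$ can be non-zero even when $m>d$, as long as $m\le d+l$ (e.g.\ $CH^{1}(\spe K;1)\cong K^{*}$ for $d=0$). The correct dimension bound is that $CH^{m}(Y;l)=0$ when $l<0$ or when $m>d+l$, i.e.\ when the group of cycles of dimension $d+l-m=j-r+l$ in $Y\times\Delta^{l}$ is empty. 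Under the hypothesis $j-r+l<0$ that is exactly what happens (if $l<0$ vanish directly; if $l\ge 0$ then $r-j>l$ gives $m>d+l$), so the conclusion is right, but you should replace the incorrect criterion with the correct dimension bound $m>d+l$ — this is precisely the ``subquotient of a group of cycles of dimension $j-r+l$'' argument the paper uses. In part 3 your computation is sound in spirit, but you should double-check the direction of the shift: the vanishing $h_{j}(N(r))=0$ for $j<-r$ is $N(r)\in\dmer^{\thomr\le r}$ (not $\le -r$) in the cohomological indexing the paper uses, after which $N(r)[r]\in\dmer^{\thomr\le 0}$ and the $t$-exactness of $-\otimes R(1)[1]$ finish the argument as you indicate; the conclusion stands, but the intermediate line has a sign slip.
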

\begin{proof}

1. This is 
an immediate consequence of Poincar\'e duality for Voevodsky motives; see Theorem 5.23 of \cite{degdoc}.


2. Obviously, it suffices to establish the statement for $N=\mgr(P)$, 
 where $P$ is as in the previous assertion; consequently, we will now treat this particular case. 
Next, recall that motivic cohomology of smooth varieties can be computed as the (co)homology of certain (Suslin or Bloch) cycle complexes; 
see Theorem 5.3.14 of \cite{kellyast} (cf. Proposition \ref{pmgc} below).
Therefore the group in question is a subquotient of a certain group of cycles of $K\perf$-dimension $j-r+l$. The result follows immediately.

3. 
Easy from Theorem 3.3.1 of \cite{bondegl} (cf. also  Corollary 4.18 of \cite{3}). 

4. This fact appears to be well-known; its  proof can easily be obtained using the {\it continuity} arguments that were considered in \S1.3 of \cite{binfeff} following \cite[\S4.3]{cd}. \end{proof}
 
Now let us prove some facts relating (complexes of) higher Chow groups over various base fields. Our first statement 
 is rather "classical" (cf. Lemma IA.3 of \cite{blect} and \S3 of \cite{vialmotab}; one can also apply the more advanced formalism of \cite{cdint} to prove it), whereas the second one relies on the results of \cite{bger} (and \cite{bgn}) and appears to be  new.



\begin{pr}\label{ptestf}
Let $j,l\in \z$.

Fix an object $(M^i)$ of $K^b(\chowr)$; 
 for a field of definition $K_0$ of $(M^i)$ denote by $G(K_0)$ the zeroth homology of  the complex $\chowm_{j}(M_{K_0}^i,R,l)$ 
 (clearly, $G$ is functorial with respect to morphisms of fields of definition for $(M^i)$; see Remark \ref{rfdef}).


I. The   following statements are valid.

1.  Let $K_0\subset K_0'$ be fields of definition for $M$. Then $G(K_0')$ is the (filtered) direct limit of $G(K)$  if we take $K$ running through all 
finitely generated 
  extensions of $K_0$ inside $K_0'$ such that the extension $(K\cap K_0^{alg})/K_0$ is separable; here all these extensions as well as $K_0'$ are endowed with the structure of fields of definition for $M$ that "comes from $K_0$" (see Remark \ref{rfdef} once again). 

2. Let $K_1/k_0^1$ and $K_2/k_0^2$ be fields of definition for $M$; let $s:K_1\to K_2$ be an embedding 
of fields such that  
 $(M_0^1{}_{K_1})_{K_2}\cong M_0^2{}_{K_2}$ 
 (yet we do not require $s$ to 
 extend to a morphism of fields of definition). Then $s$  
 induces a homomorphism $G(K_1)\to G(K_2)$ that is an isomorphism if $s(K_1)=K_2$, 
and is injective if $K_1$ is algebraically closed.  

II. Let $R=\q$.  Then 
the following conditions are equivalent.

\begin{enumerate}

\item  $G(K)=\ns$ 
for any 
 function field $K/k$.

\item $G(K_0)=\ns$ for some universal domain of definition for $M$.

\item $G(K_0)=\ns$ for any algebraically closed field of definition for $M$.

\item  $G(K_i)=\ns$ for some algebraically closed fields of definition of $M$ such that the transcendence degrees of $K_i$ over the corresponding prime field tend to infinity. 

\item $G(K_0)=\ns$ for any  field of definition for $M$.

\end{enumerate}

III. 
All the statements above remain valid if we 
define $G(K)$ as $\chowm_{j}(M_K,R)$ for a fixed $M\in \obj\dmgmr$. 

\end{pr}
\begin{proof}

We note (for convenience) that we can pass to the Poincar\'e duals in all of these statements (see Proposition \ref{pvan}(1)). Thus one can express $G(K)$ 
 in terms of motivic cohomology instead of motivic homology. We obviously do not have to track the indices involved. 

I. Recall that the motivic cohomology of  Chow motives over $K_0$  can be (functorially) computed using certain complexes whose components are expressed in terms  of algebraic cycles in fixed $K_0$-varieties.
This fact easily yields all our 
 assertions except the (very)   last injectivity one (since any finitely generated extension of a field $k$ is purely inseparable over some function field $K/k$). 

In order to verify the remaining statement we note that, for a (Voevodsky) 
 motif $N$ defined over a perfect field $L$, the motivic cohomology  of $N_{L'}$ (for a   perfect field extension $L'/L$) can be (functorially in $N$) expressed as the filtered direct limit of the corresponding cohomology of $N\otimes \mg_{L}^R(V_a)$ for certain smooth varieties $V_a$ over $L$. Next, if $L$ is algebraically closed, then the $\dmgr_L$-morphism $R\to \mgr(V_a)$ possesses a splitting given by any $L$-point of $V_a$. Hence the homomorphism in question is injective since it can be presented as the direct limit of a system of (split) injections. 

One may also apply ("explicitly") the continuity arguments mentioned in the proof of Proposition \ref{pvan}(4) in these proofs.

II. 
The existence of  trace maps for higher Chow groups (with respect to finite extensions of not necessarily perfect base fields; cf. Lemma 1.2 of \cite{vialmotab}) yields the following: if $K_0'/K_0$ is an algebraic extension and $G(K_0')=\ns$, then $G(K_0)=\ns$ as well. Along with   Proposition \ref{pvan}(4) and assertion I, this observation easily yields our claim.

III. Note that the motivic (co)homology of any Voevodsky motif can  be computed using certain  complexes of algebraic cycles. The existence of these complexes is immediate from (the $R$-module analogue of) Theorem 3.1.1 of \cite{mymot} 
(note that this result is valid for any $p$; this is a consequence of Proposition 5.3.12(iv) 
 of  \cite{kellyast}). Hence
the arguments above carry over to this setting without any difficulty. 
\end{proof}

\begin{pr}\label{predhchow}
Once again, assume that $j,l,r\in \z$, $r>0$, $(M^i)\in \obj K^b(\chowr)$; let $K_1$ and $K_2$ be function fields over $k$. 
 Suppose that there exists a geometric $k$-valuation of rank $r$ for $K_2$  such that the corresponding residue field is isomorphic to $K_1$. Then 
there exists a split injection of  the complex $\chowm_{j}(M_{K_1}^i,R,l)$
into 
the complex $\chowm_{j-r}(M_{K_2}^*,R,l+r)$. 
 

\end{pr}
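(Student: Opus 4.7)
The plan is to proceed by induction on the rank $r$, reducing to the rank-$1$ case. For the rank-$1$ case, a geometric $k$-valuation on $K_2$ with residue field $K_1$ can (after passing to an appropriate smooth model) be realised by a smooth closed divisor $i:W\hookrightarrow V$ with $V$ smooth over $k$, $k(V)=K_2$ and $k(W)=K_1$. The map I want to construct is the specialization (Gersten section) map going from motivic (co)homology at the generic point of $W$ into motivic (co)homology at the generic point of $V$.

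Concretely, I would invoke the Gysin distinguished triangle (\ref{gys}) for $i$, which (after an appropriate tensor twist by $R(j-1)[2j-1+l]$ and passage to the filtered colimit over dense affine opens $V'\subset V$ with $W':=W\cap V'$ nonempty) yields a long exact sequence containing both $\chow^{R,K_1}_{j,l}(M^i)$ and $\chow^{R,K_2}_{j-1,l+1}(M^i)$. The connecting morphism of this sequence is the Gersten-type residue map; its Poincar\'e dual (via Proposition \ref{pvan}(1) applied term-by-term to $(M^*)$) is the residue on motivic cohomology of the Poincar\'e duals $\widehat{M^i}$. What we need is a \emph{section} of this residue, producing the desired arrow $\chow^{R,K_1}_{j,l}(M^i)\to \chow^{R,K_2}_{j-1,l+1}(M^i)$; naturality in $M^i$ then assembles these arrows into the required morphism of complexes.

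The existence of such a section, together with its naturality in $M^*$, is the nontrivial content and is supplied by the splitting results of \cite{bger} and \cite{bgern} flagged in the paragraph preceding Proposition \ref{ptestf}: loc.\ cit.\ produce Gersten-style splittings for motivic (co)homology of Chow motives over (perfect-residue-field) local rings in a way that is natural with respect to morphisms of motives. For general $r$, I would iterate: a rank-$r$ geometric $k$-valuation on $K_2$ with residue field $K_1$ can be factored geometrically through a flag $W=W_r\subset W_{r-1}\subset \cdots \subset W_0=V$ of smooth closed subvarieties (after shrinking $V$ around the generic point of $W$), each $W_i\subset W_{i-1}$ a smooth divisor, with $k(W_i)=K^{(i)}$ where $K^{(0)}=K_2$ and $K^{(r)}=K_1$. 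Applying the rank-$1$ construction to each consecutive pair $(W_{i-1},W_i)$ and composing yields the split injection
\[
\chow_{j,l}^{R,K_1}(M^*)=\chow_{j,l}^{R,K^{(r)}}(M^*)\hookrightarrow \chow_{j-1,l+1}^{R,K^{(r-1)}}(M^*)\hookrightarrow \cdots \hookrightarrow \chow_{j-r,l+r}^{R,K_2}(M^*),
\]
with the composite of the individual retractions providing the splitting.

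The principal obstacle is the compatibility of the Gersten section with the differentials of the complex $(M^*)$: while the existence of a section on each term $M^i$ separately is already a nontrivial Gersten-style assertion, promoting these sections to a morphism of complexes (and hence to a splitting at the complex level rather than only on homology of the complex of Chow groups) requires the precise naturality furnished by \cite{bger} and \cite{bgern}; this is where the proof genuinely depends on those references rather than on the classical Gersten conjecture alone.
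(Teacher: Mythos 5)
Your proposal and the paper's proof rely on the same key external input --- the Gersten weight structure machinery of \cite{bger} and \cite{bgern} --- and both begin with the Poincar\'e duality reduction (Proposition \ref{pvan}(1)) that converts the claimed split injection of motivic homology complexes into a statement about motivic cohomology of the Poincar\'e duals over $K_1\perf$ and $K_2\perf$. The difference is in the packaging. The paper works entirely at the pro-motivic level: after expressing the cohomology over each $K_b$ as a filtered colimit over smooth models $X_n^b$, it produces a single split $\gdr$-morphism $\prli(\mgr(X_n^1))(r)[r]\to\prli(\mgr(X_n^2))$ in the category $\gd\supset\dmgmr$ of \cite{bger}/\cite{bgern}; since the pairing $(-\otimes O, R)$ is bifunctorial in $\gd$, the naturality in $M^*$ needed to make the maps commute with the differentials of the complex is automatic, and all ranks $r$ are handled in one stroke by citing \cite[\S6.4]{bgern}. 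You instead unwind this into a rank-$1$ construction (Gysin triangle for a smooth divisor $W\subset V$, colimit over opens, a natural section of the residue map) followed by an induction on $r$ through a flag of smooth divisors. This makes the underlying geometry more visible, but it transfers onto you two burdens the pro-motivic formulation absorbs automatically: (i) justifying that a geometric rank-$r$ valuation can, after shrinking, be realized by a flag of smooth closed subvarieties with the prescribed function fields --- which needs resolution or $\zop$-alterations when $p>0$; and (ii) keeping track of the naturality of each rank-$1$ Gersten section in $M^*$ so that the composite across the flag is still a morphism of complexes. Both points are exactly what the single split $\gdr$-morphism of \cite{bgern} already encodes, so your route, while workable, essentially re-derives a special-case shadow of the tool the paper cites off the shelf.
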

\begin{proof}
Clearly it suffices to verify this statement in the case $j=l=0$. 

Once again, we apply  
Proposition \ref{pvan}(1) and reduce our assertion to the following one:  for a complex $(N^i)$, $N^i\in \obj \chowr$, 
  there exists a split injection of the complex $(\dmgmr(N^{*}_{K_1\perf},R))$ into  $(\dmgmr(N^{*}_{K_2\perf},R(r)[r]))$. 
Note also that 
 if the schemes $\spe K_b$ (for $i=1,2$) are the inverse (filtered) limits of some systems of smooth varieties $X_n^b/k$   
  and $O\in \obj \chowr$, then $(\dmgmr(O_{K_b\perf},R))\cong \inli \dmgmr(\mgr(X_n^b)\otimes O,R)$.

 Hence the statement would be proved if we had a motivic category $\gdr\supset \dmgmr$ that contains  certain homotopy limits $\prli \mgr(X_n^b)$ for $b=1,2$ (that can be denoted as $\mg (\spe K^b)$),
 is equipped with a 
  bi-additive tensor product bi-functor $\dmgmr\times \gdr\to \gdr$    such that the groups $\gdr ((\prli \mgr(X_n^b))\otimes O,R)$ are functorially isomorphic to  $\inli (\mgr(X_n^b)\otimes O,R)$, and such that there exists a split $\gdr$-morphism $\prli(\mgr(X_n^1))(r)[r] \to \prli(\mgr(X_n^2))$. 
  
  Luckily, the results of previous papers yield the existence of $\gdr$ having all these properties. Indeed, for $R=\z$ a certain category of this sort was constructed in \cite{bger}. It has suffered from two drawbacks: it only contained $\dmge$ instead of $\dmgm$, and the splitting in question was established (see Corollary 4.2.2(2) of ibid.) only in the case where $k$ is countable. Yet 
  one can easily "correct" that category so that it would contain $\dmgmr$, and 
	Proposition 5.2.6(8) of \cite{bgn} implies that the desired splitting  exists for 
   any perfect $k$ (see Remark 5.2.7(7) of ibid.). 
 \end{proof}

\begin{rema}\label{rrat}\label{rhuge}
1. Since a function field of dimension $d$ is a finite separable extension of $k(t_1,\dots,t_d)$, it is also a residue field for a (rank $1$) geometric valuation of $k(t_1,t_2,\dots,t_{d+1}$). Thus one may say that it suffices to compute stalks at rational extensions of $k$ only!

2. One can 
prove  
 the natural analogue of the previous proposition for the complex $\chowm_{j}(M_{K_b}^*,R,l) $ replaced by the group $\chowm_{j}(N_{K_b},R,l) $, where $N$ is an object of $\dmer$ (or of $\dmerb$; to this end it suffices to recall just a little more of the results of \cite{bger}). 

Thus one obtains that $N$ belongs to $ \dmer^{\thomr\le 0}$  
if and only if $\chowm_{0}(N_{K}^*(1)[1],l)  =\ns$ for all rational fields $K/k$ (only!) and for all $l<0$. 



3. One can also verify  that $\chowm_{j-r}(M_{K}^*,R,l+r) $ contains (as a retract) the sum of any 
 finite number of $\chowm_{j}(M_{k_m}^*,R,l)  $, where $k_m$ are residue fields for distinct geometric valuations of $K$ of rank $r$ . Hence the homology    groups of $\chowm_{j-r}(M_{K}^*,R,l+r)$ can be quite huge. 
 Consequently, we will not try to calculate them in general (at least, in the current paper); we will rather be interested in their vanishing. 
\end{rema}

\section{
On Chow-weight homology of "general" motives}\label{smain} 

In this section we  prove the central motivic results of this paper; their applications to (motives and cohomology with compact support of) varieties will be described later. The main results of this section are Theorems \ref{tmain}, 
 \ref{tstairs}, and \ref{ttors}, and Corollary \ref{cmothomol}, whereas the relation to cohomology is discussed in \S\ref{sconj}.

In \S\ref{schw} we introduce (using the weight complex functor) the main homology theories of this paper 
 and prove several of their properties.

In \S\ref{ssmain} we 
relate  Chow-weight homology with the $c$-effectivity of motives and their weights. A very particular case of these result yields: a cone of a morphism $h$ of Chow motives is $c$-effective if and only if $h$ induces isomorphisms on Chow groups of dimension less than $c$.

In \S\ref{stairs} we generalize the aforementioned results to obtain equivalent criteria for the vanishing of Chow-weight homology in a certain "range" (we introduce the term "staircase set" for this purpose); we also note that the corresponding "decompositions" of motives can be assumed not to increase their dimension. We demonstrate the utility of our Theorem \ref{tstairs} by applying it to morphisms of Chow motives. 

In \S\ref{shchw} we prove that the properties of motives studied in the previous subsection can also be "detected" through 
  higher Chow-weight homology. As a consequence, we relate the vanishing of Chow-weight homology of a motif $M$ with that for its higher degree  (zero-dimensional) motivic homology (along with its $\thomr$-connectivity). 


In \S\ref{sconj} we relate the vanishing properties of the Chow-weight homology of $M$  
 to the 
 weight factors of the cohomology  $H^*(M)$ (for various cohomology theories). 
The fact that  "motivic effectivity" conditions imply the corresponding effectivity of the factors of the weight filtration on $H^*(M)$ is immediate from the 
general theory of weight spectral sequences. We also 
prove that a pair of (more or less) "standard" motivic conjectures gives the converse implication for singular cohomology (of motives with rational coefficients).

In \S\ref{sboutors} we 
study in detail the question when the higher $\q$-linear Chow-weight homology of an "integral" 
 motif $M$ vanishes (using the results of \cite{bsosnl}). 
	 In particular, we prove that if the Chow-weight homology (or motivic homology; see Corollary \ref{chtors}(II))  groups of $M$ are torsion in higher degrees then their exponents are finite. 

\subsection{Chow-weight homology: definition and basic properties}\label{schw}


Let us define the main homology theories of this paper; see Definition 
\ref{deffdim}(6) for the notation that we use here.

\begin{defi}\label{dcwh}
Let $M$ be an object of $\dmgmr$.

1. 
 We will write $\wcr(M)$ for a choice of a weight complex for $M$; 
 recall that  one can assume  $\wcr$ to be an exact functor $\dmgr\to K^b(\chowr)$.

In the case $M\in \obj \dmger$ we will always assume that  $\wcr(M)$ is an object of  $K^b(\chower)$.

2. Let $j,l,i\in \z$; let $K$  
be a field extension of $k$. 

For $\wcr(M)=(M^s)$ we define the abelian group $\chw_j^{i}(M_K,R)$ (resp.  $\chw^{i}_{j}(M_K,R,l)$) as the $i$-th homology of the complex $\chowm_j(M^s_K,R)$ (resp. of $\chowm_{j}^{R,K}(M^s_K,R,l)$) obtained from $\wcr(M)$. We will often omit $R$ in this notation when its choice is clear.
\end{defi}


Let us prove some 
 properties of these functors. 

\begin{pr}\label{pcwh}

Let $l,i, K$ be as above, $r,j\ge 0$.

1. 
$\chw^{i}_{j}(-_K,R,l)$ yields a homological functor on $\dmger$ (that does not depend on any choices). Moreover, this functor factors through the base field change functor  $\dmger\to  \dmger({K^{perf}})$.

2. 
Assume   $r\ge j+l$. Then  $\chw_j^{i}(-_K,R,l)$ kills $\dmger\lan r+1\ra$ (and consequently induces a well-defined functor $\dmgrr\to \ab$; see Definition \ref{deffdim}(3)). 

3. 
Suppose $N\in \dmgrj_{\wchow\ge 0}$. Then for any smooth projective connected variety $P/k$  the group $\dmgrj (l^j(\mgr(P)\lan j\ra),N)$ is isomorphic to $ \chw_{j}^0(N_{k(P)},R)$ (note that the latter group is well-defined according to the previous assertion).

4. Assume $N\in \dmgrr_{\wcho^r\ge -n}$ (see Corollary \ref{cchows}(6) for the notation) for some $n\in \z$. Then $\chw_{j}^{i}(N_K,l)=\ns$ for all $i>n$, $j\le r-l$ (note that these Chow-weight homology groups of $N$ are well-defined). 

5. Assume  $0\le m\le r$; let $N$ be an element of $ \dmger_{\wchow\ge -i}$ (resp.  of $\dmgrr_{\wcho^r\ge -i}$) and assume  
$\chw^{i}_{j}(N_K)=\ns$ for all $0\le j\le m$ and all function fields $K/k$. Then for any fixed choice of 
a $-i$-weight decomposition   $\wchow_{ \le -i}N\stackrel{g}{\to} N\to \wchow_{\ge 1-i}N$ of $N$ (see Remark \ref{rstws}(2)) 
 the morphism $g[i]$ can be factored through an object of $\chower\lan m+1\ra$ (resp. through an image of  an object of this sort in $\dmgrr$).

6. If $N\in \obj \dmger 
 \cap \obj \dmer^{\thomr\le 0}$ (see 
Remark \ref{rhomr})
 and $i>j+l$ then  $\chw_{j}^{i}(N_K,l)=\ns$.

7.  
 If $N\in \dmger_{\wchow\ge 0}$ 
 then $\chw_{j}^{0}(N_K,R)\cong \chowm_{j}(N_K,R)$. 

\end{pr}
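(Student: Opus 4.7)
The plan is to establish the following key claim---the ``alternative description of $\dmerb^{\thomr\le 0}$ in terms of Chow motives'' promised in Remark \ref{rhomr}: for $N \in \dmger \cap \dmer^{\thomr\le 0}$ one can choose a weight complex $(N^*)$ (with respect to $\wchow$) such that $N^p \in \obj\chower\lan p\ra$ for all $p \ge 0$, and $N^p = 0$ for $p < 0$. Granting this, Proposition \ref{pvan}(2) forces $\chow_{j,l}^{R,K}(N^p) = 0$ whenever $p > j + l$ (since then $j - p + l < 0$), so the complex $\chow_{j,l}^{R,K}(N^*)$ vanishes in degrees $> j + l$ and its $i$-th cohomology $\chw^{j,l}_{i,K}(N)$ vanishes for $i > j + l$, as required.

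To prove the claim, I would first verify it for generators. For $X$ smooth I would choose a smooth compactification $\bar X$ (available in characteristic zero; in positive characteristic one uses $1/p$-coefficient alteration-based compactifications, exploiting the hypothesis $1/p \in R$) with simple normal-crossings complement $D = D_1 \cup \cdots \cup D_n$, and iterate the Gysin triangle (\ref{gys}) along the strata. This produces a weight Postnikov tower for $\mgr(X)$ whose $p$-th weight layer (for $p \ge 0$) is $\mgr(D^{(p)})\lan p\ra \in \chower\lan p\ra$, where $D^{(p)}$ denotes the disjoint union of the $p$-fold intersections of the boundary components (and $D^{(0)} := \bar X$). For smooth projective $X$ the weight complex is concentrated in degree $0$, trivially satisfying the claim.

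Next I would show that the class $\mathcal{C}$ of all $N \in \dmger$ admitting such a weight complex is closed under extensions and retracts. Given a distinguished triangle $A \to B \to C \to A[1]$ with $A, C \in \mathcal{C}$, the functoriality of weight complexes (Proposition \ref{pbwcomp}(\ref{iwcfunct})) yields a triangle $t(A) \to t(B) \to t(C) \to t(A)[1]$ in $K^b(\chower)$ that realizes $t(C)$ as $\co(t(A) \to t(B))$; hence degree-wise $t(C)^n \cong t(B)^n \oplus t(A)^{n+1}$. Since $t(A)^{n+1} \in \chower\lan n+1\ra \subseteq \chower\lan n\ra$ and $t(C)^n \in \chower\lan n\ra$, the direct summand $t(B)^n$ lies in the Karoubi-closed class $\chower\lan n\ra$. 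Retracts are handled analogously via the homotopy-functoriality of $t$.

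The main obstacle is bridging the generation statement of Remark \ref{rhomr} (which uses extensions \emph{and} arbitrary coproducts in $\dmerb$) to the extension-and-retract closure of $\mgr(X)$'s inside $\dmger$. One must argue that compactness of geometric motives reduces those infinite coproducts to finite operations, so that every $N \in \dmger \cap \dmer^{\thomr\le 0}$ can be obtained from finitely many $\mgr(X_i)$ via iterated extensions and retracts in $\dmger$. This compact-objects-in-a-$t$-structure argument, combined with the inductive step above, establishes $\dmger \cap \dmer^{\thomr\le 0} \subseteq \mathcal{C}$ and completes the proof.
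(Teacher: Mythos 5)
Your proposal addresses only part 6 of the six-part proposition; parts 1--5 are not discussed at all (they follow from Proposition \ref{pbwcomp}(\ref{iwcoh}), Proposition \ref{pvan}(2), Proposition \ref{pcrulemma}(5), Corollary \ref{cfactor}(1,2), etc., but that should at least be said). I will focus on part 6, which is indeed where the substance lies.

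There are two genuine problems with your argument for part 6.

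First, the ``key claim'' is stated incorrectly: the assertion that one can choose $t(N)=(N^p)$ with $N^p=0$ for $p<0$ is false. For $n>0$ the Tate object $R[n]$ lies in $\dmer^{\thomr\le 0}$ (its zero-dimensional motivic homology over any $K$ vanishes in the relevant degrees), yet $R[n]\in\dmger_{\wchow=n}$, so its weight complex is $R$ concentrated in degree $-n<0$. The correct claim is that one may take $N^p\in\obj\chower\lan \max(p,0)\ra$, which is exactly what the extension-closure description in Corollary \ref{cmothomol}(4) encodes. Luckily this error does not affect your downstream deduction (for $i<0$ with $i>j+l$ we necessarily have $j+l<0$, and then Proposition \ref{pvan}(2) already kills $\chow_{j,l}^{R,K}$ on all effective Chow motives), but it suggests confusion about what shape the weight complex can have.

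Second, and much more seriously, the step you yourself flag as ``the main obstacle'' is a real gap, not a finishing detail. You need to pass from ``$\dmerb^{\thomr\le 0}$ is the extension-and-\emph{coproduct} closure of the generators'' to ``every geometric $N\in\dmger\cap\dmer^{\thomr\le 0}$ is built from finitely many generators by extensions and retracts inside $\dmger$.'' This is not a formal consequence of compactness of $N$; the intersection of the compacts with a localizing (or merely coproduct-closed) $t$-aisle does not in general equal the extension-retract closure of the compact generators of that aisle, and establishing it here is essentially a reformulation of Corollary \ref{cmothomol}(4). But Corollary \ref{cmothomol} is \emph{proved} in the paper from Theorem \ref{tstairs}, which in turn rests on Theorem \ref{tmain} and on Proposition \ref{pcwh}(3)--(6). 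So your route is circular unless you supply an independent proof of this finiteness statement, and none is given.

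The paper's argument avoids the issue entirely and in a genuinely different way: rather than shrinking $N$ to a finite building process, it \emph{enlarges} the ambient category. One extends $\wchow$ to the coproduct-closed category $\dmerb$ so that the extension respects coproducts (dual of Theorem 2.2.6 of \cite{bgern}), whence the $\wchow$-pure functor $\chw^{j,l}_{i,K}$ extends to a homological, coproduct-respecting functor $\dmerb\to\ab$. The vanishing ``$\chw^{j,l}_{i,K}(-)=0$ for $i>j+l$'' is then a property of objects of $\dmerb$ that is closed under extensions and arbitrary coproducts, so by \cite[Theorem 2.4.3]{bondegl} it suffices to check it on the generators $\chower(a)[a+b]$ ($a,b\ge 0$), where it is immediate from Proposition \ref{pvan}(2). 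This dévissage in the big category is the key idea, and it completely sidesteps the compactness/finiteness problem you would otherwise need to solve. Your Gysin-triangle computation for $\mgr(X)$ is a nice illustration but is not used (and would also face genuine difficulties in positive characteristic, where snc compactifications are not directly available). In short: the reduction-to-generators idea is right, but you chose to carry it out at the level of compact objects, where it does not work without substantial extra input; the paper carries it out at the level of the cocomplete category, where it is nearly formal.
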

\begin{proof}
1. The first part of the assertion is just a particular case of Proposition \ref{pbwcomp}(\ref{iwcoh}).
The second part follows immediately from the weight-exactness of this base field change functor 
(provided by part \ref{ip4} of this proposition) along with Proposition \ref{pbwcomp}(\ref{iwcfunct}). 

2. Recall that $\dmger\lan r\ra$ is densely generated by $\obj\chower\lan r\ra$ (as a triangulated subcategory of $\dmger$). Hence the statement follows immediately from Proposition \ref{pvan}(2). 

3. 
By Proposition \ref{pcrulemma}(\ref{iсru5}), $\chw_{j}^{0}(N_{k(P)})$ is isomorphic to the zeroth homology of  the complex $\dmgrj(l^j(\mgr(P)\lan j \ra),N^*)$ (where $N^*$ are the terms of a weight complex for $N$). Hence it remains to apply Corollary \ref{cfactor}(1).

4. Clearly, we can assume that the weight complex of $N$ is concentrated in degrees at most $ n$ (see Proposition \ref{pbwcomp}(\ref{iwc3})). Next, recall that any object of 
 $\hw_{\cho}^r$ is a retract of a one coming from $\chower(\subset  \dmger)$; see Proposition \ref{ploc}(3). Hence the statement follows from 
Proposition \ref{pvan}(2).

5. Obviously, we can assume that $i=0$. 

We have $\wchow_{\le 0}N\in \dmger_{\wchow=0}$ (resp. $\wchow_{\le 0}N \in \dmgrr_{\wcho^r=0}$); consequently, this motif is a retract of $\mgr(P)$ (resp. of  $l^r(\mgr(P))$) for some $P\in \spv$.

Hence it suffices to check the following for any $0\le j \le m$ and  $P^j\in \spv$: any 
morphism $g_j$ in the set $ \dmger(\mgr(P^j)\lan j \ra, N)$ (resp. in $ \dmgrr(l^r(\mgr(P^j)\lan j \ra), N)$)   can be factored through  
$\mgr(P^{j+1})\lan j+1 \ra$ (resp. through $l^r(\mgr(P^{j+1})\lan j+1 \ra)$) for some $P^{j+1}\in \spv$. 

By Corollary \ref{cfactor}(2) applied to $l^j$ (resp. to the localization functor $l_r^j:\dmgrr\to \dmgrj$), to achieve the goal it suffices to verify that the image of $g_j$ in $\dmgrj$ is $0$. It remains to note that this image is an element of $\dmgrj(l^j(\mgr(P^j)\lan j \ra), l^j(N))$ (resp. of $\dmgrj(l^j(\mgr(P^j)\lan j \ra), l^j_r(N))$), whereas the last group is zero according to 
assertion 3  along with our assumptions on $\chw_{j}^*(N_{k(P_j)} )$.
 
6. 
Clearly, $\obj \dmger\cap  \dmer^{\thomr\le 0}=\obj \dmger\cap  \dmerb{}^{\thomr\le 0}$
(see the end of \S\ref{smotnot}).
 
 Now, 
in \cite{bondegl} 
  the following statement was proved (see 
	Theorem 2.4.3 and Example 2.3.5(1) of ibid.): $\dmerb{}^{\thomr\le 0}$ is the smallest extension-closed subclass of $\obj\dmerb$ that is closed with respect to coproducts and contains $\obj\chower(a)[a+b]$ for all $a,b\ge 0$. 

Next, we note that $\wchow$ can be extended (from $\dmger$) to $\dmerb$ in a way that "respects coproducts" (weight structures of this type are called smashing ones; see Theorem 3.2.2 of  \cite{bwcomp} or Proposition 1.7(1) of \cite{binters}). 
Hence Chow-weight homology (as well as any other $\wchow$-pure homology theory whose target is an AB4 abelian category) can be extended to a homological functor $\dmerb\to \ab$ that respects coproducts  (see Theorem 3.2.2(5) of  \cite{bwcomp}). 

Hence it suffices to verify the vanishing in question for $N\in \obj \chower(a)[a+b]$ (for some $a,b\ge 0$). Thus it remains to apply Proposition \ref{pvan}(2).

7. Proposition \ref{pwchow}(\ref{ip4}) (combined with Proposition \ref{pbwcomp}(\ref{iwcfunct})) allows us to assume that $K=k$. Thus it remains to apply Corollary \ref{cfactor}(1) (once again).
\end{proof}

\begin{rema}\label{rpure}
Recall that functors constructed by means of Proposition \ref{pbwcomp}(\ref{iwcoh}) are called pure ones. The reason for this is their relation to  Deligne's purity of singular and \'etale cohomology; see Remark 2.1.3(3) of  \cite{bwcomp}. It is easily seen that a homological functor from $\dmger$ is pure with respect to $\wchow$ if and only if it annihilates $\chower[i]$ for all $i\neq 0$. Thus one may say that pure functors from $\dmger$ are the minimal extensions of the corresponding additive functors from $\chower$.

Other interesting functors that are pure with respect to Chow weight structures were considered in \cite{kellyweighomol} and \cite{bachinv}. Moreover, in \S4.1--2 of  \cite{bwcomp} it was demonstrated that Mackey functors are pure with respect to the corresponding weight structure $w^G$ on the equivariant stable homotopy category $SH(G)$, where $G$ is a compact Lie group; in particular, singular homology gives a pure functor $SH\to \ab$.
\end{rema}

\subsection{Relating Chow-weight homology to $c$-effectivity and weights}\label{ssmain}

Now we start proving the central results of this paper; consult \S\ref{smotnot}, Proposition \ref{pwchow}(\ref{ip1}), and Definition \ref{dcwh} (along with Definition \ref{deffdim}(6)) for the notation. 

\begin{theo}\label{tmain}

Let $M\in \obj \dmger$,  
 $c> 0$, $n\in \z$.

Then the following statements are valid.

1. $M\in \obj \dmger\lan c\ra$ (i.e., $M$ is $c$-effective) if and only if $\chw_{j}^{i}(M_K)=\ns$ for all $i\in \z$, $0\le j< c$, and all function fields 
$K/k$.

2. More generally, 
 $\chw_{j}^{i}(M_K)=\ns$ for all $0\le j< c$, $n<i$,  and all function fields 
$K/k$ if and only if $M$ is an extension (see \S\ref{snotata}) of an element of $(\dmger)_{\wchow\ge -n}$ (i.e., of a motif of weights at least $-n$) by an element of $\dmger_{\wchow\le -n-1}\lan c\ra$.  

3. $\chw_{j}^{i}(M_K)=\ns$ for all $ j\ge 0$, $i>n$, and all function fields 
$K/k$, if and only if $M\in \dmger{}_{\wchow\ge -n}$.
 \end{theo}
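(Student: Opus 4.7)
The plan is to prove the backward direction of part 2 by induction on weight --- the main work --- and then derive parts 1 and 3 as consequences. The forward directions of all three parts reduce to appropriate choices of weight complex. If $M \in \obj \dmger\lan c\ra$, Proposition~\ref{pwchow}(6) yields a weight complex $\wcr(M) = (M^i)$ with each $M^i \in \obj \chower\lan c\ra$, whence Proposition~\ref{pvan}(2) gives $\chow_j^{R,K}(M^i) = \ns$ for $0 \le j < c$ and all field extensions $K/k$; if $M \in \dmger_{\wchow \ge -n}$, one may choose the weight Postnikov tower with $w_{\le l}M = 0$ for all $l \le -n-1$, forcing $M^j = 0$ for $j > n$ in the weight complex. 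The forward direction of part 2 then follows from the long exact sequence for the homological functor $\chw^j_{i,K}$ (Proposition~\ref{pcwh}(1)) applied to the given extension.

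For the backward direction of part 2, I would induct on the smallest $m \ge 0$ with $M \in \dmger_{\wchow \ge -m}$, which exists by boundedness of $\wchow$. The base case $m \le n$ is immediate. In the inductive step $m > n$, Proposition~\ref{pbw}(\ref{iwd0}) (applied after shifting) gives $w_{\le -m}M \in \dmger_{\wchow = -m}$. Proposition~\ref{pcwh}(5) with weight index $m$ and effectivity level $c-1$ (valid by the vanishing $\chw^j_{m,K}(M) = \ns$ for $0 \le j < c$, since $m > n$) factorises the weight truncation morphism as $g: w_{\le -m}M \xrightarrow{g_1} P[-m] \xrightarrow{g_2} M$ with $P \in \chower\lan c\ra$, so $P[-m] \in \dmger_{\wchow = -m}\lan c\ra \subset \dmger_{\wchow \le -n-1}\lan c\ra$. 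Setting $M_1 = \co(g_2)$ gives the triangle $P[-m] \to M \to M_1 \to P[-m+1]$, and an octahedron on $g = g_2 \circ g_1$ (whose cone is $w_{\ge -m+1}M$ by the weight decomposition) produces a distinguished triangle $Z \to w_{\ge -m+1}M \to M_1 \to Z[1]$ where $Z = \co(g_1)$. Since $Z$ is an extension of $w_{\le -m}M[1] \in \dmger_{\wchow = -m+1}$ by $P[-m] \in \dmger_{\wchow = -m}$, it lies in $\dmger_{\wchow \ge -m} \cap \dmger_{\wchow \le -m+1}$; therefore $Z[1] \in \dmger_{\wchow \ge -m+1}$, and combined with $w_{\ge -m+1}M \in \dmger_{\wchow \ge -m+1}$, extension-closedness yields $M_1 \in \dmger_{\wchow \ge -m+1}$. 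The long exact sequence from $P[-m] \to M \to M_1$, combined with $\chw^j_{*, K}(P[-m]) = \ns$ for $0 \le j < c$ (since the weight complex of $P[-m]$ is $P$ concentrated in a single degree and $P \in \chower\lan c\ra$, so Proposition~\ref{pvan}(2) applies), propagates the hypothesis to $M_1$. The inductive hypothesis then yields an extension $\tilde M'' \to M_1 \to \tilde M'$ with $\tilde M' \in \dmger_{\wchow \ge -n}$ and $\tilde M'' \in \dmger_{\wchow \le -n-1}\lan c\ra$, and a further octahedron on the composition $M \to M_1 \to \tilde M'$ assembles $M$ as an extension of $\tilde M'$ by an object that is itself an extension of $\tilde M''$ by $P[-m]$; both of these lie in the extension-closed class $\dmger_{\wchow \le -n-1}\lan c\ra$ inside $\dmger$.

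For part 1, apply part 2 with $n = -b-1$, where $b$ is the largest integer with $M \in \dmger_{\wchow \le b}$: orthogonality $\dmger_{\wchow \le b} \perp \dmger_{\wchow \ge b+1}$ forces the middle map $M \to M'$ of the resulting triangle to vanish, splitting it and presenting $M$ as a direct summand of $M'' \in \dmger\lan c\ra$; Karoubi-closedness of $\dmger\lan c\ra$ in $\dmger$ then yields $M \in \obj \dmger\lan c\ra$. For part 3, the same inductive scheme applies but Proposition~\ref{pcwh}(5) now furnishes factorisations of $g[m]$ through $\chower\lan c\ra$ for \emph{every} $c \ge 1$; presenting the Chow motif $w_{\le -m}M[m]$ as a retract of $\mgr(Q)$ for some smooth projective $Q/k$ and invoking Proposition~\ref{pcrulemma}(2) with $c > \dim Q$ forces the composition $\mgr(Q) \to w_{\le -m}M[m] \to P$ to vanish, hence $g_1 = 0$ and thus $g = 0$; a standard splitting argument on the weight decomposition triangle then gives $M \in \dmger_{\wchow \ge -m+1}$, completing the induction. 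The principal obstacle will be the weight analysis of $Z$ in the octahedron, on which the reduction of the top weight index at each inductive step hinges.
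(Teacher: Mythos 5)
Your proof is correct, modulo two cosmetic slips, and it takes a genuinely different route from the paper's proof of part~2.

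\textbf{Minor slips.} First, the induction parameter should be "the smallest integer $m$ (of either sign) with $M\in\dmger_{\wchow\ge -m}$"; the extraneous constraint $m\ge 0$ would break the argument for $n<0$, where one might need to push the bottom weight above $0$ and the constrained $m$ no longer decreases. With the tight $m$, one also gets for free that $w_{\le -m}M\ne 0$, so each step genuinely peels off a nonzero layer. Second, in the derivation of part~1, "$b$ is the largest integer with $M\in\dmger_{\wchow\le b}$" should read "smallest"; as written no largest $b$ exists. Neither slip affects the substance.

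\textbf{Comparison with the paper's argument.} For part~2 the paper passes to the Verdier quotient $\dmgr^{c-1}=\dmger/\dmger\lan c\ra$, where the connecting map $g$ of a shifted weight decomposition vanishes outright: by Proposition~\ref{pcwh}(5) it factors through the image of $\chower\lan c\ra$, which is zero in $\dmgr^{c-1}$. This gives $l^{c-1}(M)\in\dmgr^{c-1}_{\wchow\ge -n}$ directly by a one-step boost, and the extension structure of $M$ in $\dmger$ is then recovered by citing Theorem~3.2.5 and Remark~3.2.6(1) of \cite{sosn} (equivalently Proposition~\ref{ploc}(2)). You instead stay entirely in $\dmger$: there $g$ does not vanish but factors as $g_2\circ g_1$ through $P[-m]$ with $P\in\chower\lan c\ra$, and your octahedron on this factorisation replaces $M$ by $M_1=\co(g_2)$, which has strictly higher bottom weight and, by the long exact sequence and the vanishing of the Chow-weight homology of $P[-m]$ in the relevant range, inherits the hypothesis; the final extension is then reassembled by a second octahedron. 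This buys you a self-contained argument that avoids the localization formalism and the external citation, at the cost of carrying the octahedron bookkeeping explicitly. Your part~3 is essentially the paper's argument (both use Propositions~\ref{pcwh}(5) and \ref{pcrulemma}(2) to force $g=0$), while your derivation of part~1 from part~2 uses a splitting via orthogonality at the top weight rather than the paper's observation that boundedness of $\wchow$ on $\dmgr^{c-1}$ forces $l^{c-1}(M)=0$; both are fine.
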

\begin{proof}
1. If $M$ is an object of $ \dmger\lan c\ra$ then  $\chw_{j}^{i}(M_K)=\ns$ for all $j,i,$ and $K$ as in the assertion by Proposition \ref{pcwh}(2).

Conversely, assume that  $M$ satisfies the corresponding Chow-weight homology vanishing assumptions. 
 Since the  weight structure $\wcho^{c-1}$ is bounded  (see Corollary \ref{cchows}(6)),  it  suffices to prove that $l^{c-1}(M)$  belongs to $\dm_{gm}^{R,c-1}{}_{\wcho^{c-1} \ge r}$ for any $r\in \z$. 
Hence this assertion 
 reduces to the next one.

2. Assume that  the object $l^{c-1}(M)$ 
belongs to $\dmgrco_{\wcho^{c-1}\ge -n}$. 
Then the vanishing of Chow-weight homology groups in question is immediate from part 4 of  Proposition \ref{pcwh}.

Conversely, assume that  our Chow-weight homology vanishing assumptions are fulfilled. Clearly, there exists an integer $q$ such that $l^{c-1}(M)\in \dm_{gm}^{R,c-1}{}_{\wcho^{c-1} \ge q}$. By Proposition \ref{ploc}(2), it suffices to verify the following: 
if $l^{c-1}(M)\in \dm_{gm}^{R,c-1}{}_{\wcho^{c-1} \ge t}$ for some $t<-n$, then $l^{c-1}(M)$ also belongs to $\dm_{gm}^{R,c-1}{}_{\wcho^{c-1} \ge t+1}$.

Let us take a   $t$-weight decomposition $$\wcho^{c-1}{}_{\le t}l^{c-1}(M)\stackrel{g}{\to} l^{c-1}(M)\to \wcho^{c-1}{}_{\ge t+1} l^{c-1}(M)$$ 
of $l^{c-1}(M)$.  Proposition \ref{pcwh}(5) implies $g=0$. Hence $l^{c-1}(M)$ is a retract of an element of $\dm_{gm}^{R,c-1}{}_{\wcho^{c-1} \ge t+1}$; thus it belongs to $\dm_{gm}^{R,c-1}{}_{\wcho^{c-1} \ge t+1}$ itself.  According to 
	 Proposition \ref{ploc}(2), $M$ is an extension of an element of $(\dmger)_{\wchow\ge -n}$ by an element of $\dmger_{\wchow\le -n-1}\lan c\ra$.

3. If $M\in \dmgr_{\wchow \ge -n}$ then  the previous assertion yields the vanishing of $\chw_{j}^{i}(M_K)=\ns$ for all $ j\ge 0$, $i>n$, and all function fields 
$K/k$.

Conversely, it suffices (similarly to the previous argument)   to check the following: if $M\in \dmgr_{\wchow \ge t}$ for some $t<-n$ then $M\in \dmgr_{\wchow \ge t+1}$.
Again, we can fix a 
 $t$-weight decomposition $\wchow_{ \le t}M\stackrel{g}{\to} M\to w_{ \ge t+1} M$ and check that $g=0$. 
Assume that $\wchow_{ \le t}M[-t]$ is (a Chow motif) of dimension at most $ s$ for some $s\ge 0$.
By Proposition \ref{pcwh}(5), our Chow-weight homology assumptions yield that $g[-t]$ can be factored through 
$\chower \lan s+1 \ra$. Hence Proposition \ref{pcrulemma}(\ref{iсru2}) implies that $g=0$. 
\end{proof}

\begin{rema}\label{rmain}
We make some simple remarks. 

\begin{enumerate}

\item\label{iq}
In the case $R=\q$  Proposition \ref{ptestf}(II) implies that, instead of 
checking whether the corresponding  $\chw_{j}^{i}(M_K)=\ns$ for all function fields $K/k$, it suffices to take $K$ to be a single universal domain containing $k$; cf. Theorem \ref{tstairs} below.

Moreover, throughout 
this paper 
 instead of assuming $R=\q$ is suffices to assume that $R$ is a $\q$-algebra. This generalization may be relevant for studying motives similar to those considered in \cite{wild}.

\item\label{imm} As a very particular case of the theorem, we obtain the following fact: for a morphism $h$ of effective Chow motives the complex
 $\co(h)$ is $c$-effective 
(i.e., it is homotopy equivalent to a cone of a morphism of $c$-effective Chow motives) 
if and only if $h$ induces isomorphisms on the corresponding Chow groups of dimension less than $c$; cf. Remark \ref{rcomplexes}(1) below. Certainly, here one should consider the Chow groups over all function fields over $k$ for a general $R$; for $R=\q$ a single  universal domain $K/k$ is sufficient. 
 Another equivalent condition is that "$h$ possesses an inverse modulo cycles supported in codimension $c$" (see Corollary \ref{ccones} and Remark \ref{rcones} 
  below for more detail).

We will prove an extension of this equivalence statement
in Corollary \ref{ccones} below.
Even for $R=\q$  these particular cases of the theorem haven't been previously stated in the 
 literature.

\item\label{igm} 
Obviously, for any $i,j\in \z$ the functor $\chw_j^i$ is homological on the category $\dmgr\supset \dmger$ as well, and for any $r\in \z$ we have $\chw_j^i(-\lan r\ra)\cong \chw_{j+r}^i$. Hence Theorem \ref{tmain}(1) implies the following statement: for $M\in \obj  
\obj \dmgr$ we have $M\in \obj \dmger\lan c\ra$  if and only if $\chw_{j}^{i}(M_K)=\ns$ for all $i\in \z$,  
all integral $j$ less than $c$, and all function fields $K/k$.

The remaining parts of Theorem \ref{tmain} along with its generalizations below can be easily 
 extended similarly. We leave the details for these statements to the reader.

\item\label{icalc} 
The Chow-weight homology groups are rather difficult to calculate (and they tend to be huge, at least, over universal domains; cf. \S\ref{small});  still they are  somewhat easier to treat than  the (ordinary) motivic homology groups. In particular, 
 $\chwaa$ can be explicitly computed for any 
 motif that  belongs to the subcategory of $\dmger$ densely generated by $\cup_{j\ge 0}((d_{\le 1}\dmger)\lan j \ra)$ (cf. Remark \ref{rcyclass}(2) below), whereas the $0$-dimensional motivic homology is 
 very difficult to compute already  for 
 $\com \p^2$. 
 We will say more on the comparison of Chow-weight homology with motivic one in \S\ref{shchw}
 below (and especially in Remark \ref{rmothomol}(1)).
\end{enumerate}
\end{rema}

\subsection{A generalization (in terms of staircase sets)}\label{stairs}

To generalize Theorem \ref{tmain} we  need the following technical definition.

\begin{defi}\label{dreasi}
Let $\ii$ be a subset of $\z\times  [0,+\infty)$ (see \S\ref{snotata}).

We will call it a {\it staircase} set if for any $(i,j)\in \ii$ and  $(i',j')\in \z\times  [0,+\infty)$ such that $i' \ge i$ and $ j' \le j$ we have $(i',j') \in \ii$. 

For $i\in \z$  the minimum of $j\in [0,+\infty]$ such that $(i,j) \notin \ii$ will be denoted by $a_{\ii,i}$. 
\end{defi}

\begin{rema}\label{rstair}
1.  Obviously,  $\ii\subset \z\times  [0,+\infty)$ is a staircase set if and only if it equals the union of the strips $\bigcup\limits_{(i_0,j_0) \in \ii} \ii_{i_0,j_0}$, where $\ii_{(i_0,j_0)} = [i_0,+\infty) \times [0,j_0]$ (see \S\ref{snotata}). 

2. For the convenience of the readers we note that the points of the corresponding  strip  $\ii_{(2,2)}$ is marked  in grey on the following picture: 
\begin{figure}[h!]
\center
\begin{tikzpicture}
\draw [<->] (0,3.4) node [left] {$j$} -- (0,0) -- (7.8,0)node [below] {$i$} ;
\fill[fill=gray!40] (2,0) -- (7.8,0) -- (7.8,2) --(2,2)-- cycle;
\draw[fill][blue] (-1,0) circle [radius=0.06];
\draw[fill][blue] (-1,1) circle [radius=0.06];
\draw[fill][blue] (-1,2) circle [radius=0.06];
\draw[fill][blue] (-1,3) circle [radius=0.06];
\draw[fill][blue] (-2,0) circle [radius=0.06];
\draw[fill][blue] (-2,1) circle [radius=0.06];
\draw[fill][blue] (-2,2) circle [radius=0.06];
\draw[fill][blue] (-2,3) circle [radius=0.06];
\draw[fill][blue] (-3,0) circle [radius=0.06];
\draw[fill][blue] (-3,1) circle [radius=0.06];
\draw[fill][blue] (-3,2) circle [radius=0.06];
\draw[fill][blue] (-3,3) circle [radius=0.06];
\draw[fill][blue] (-4,0) circle [radius=0.06];
\draw[fill][blue] (-4,1) circle [radius=0.06];
\draw[fill][blue] (-4,2) circle [radius=0.06];
\draw[fill][blue] (-4,3) circle [radius=0.06];
\draw[fill][blue] (0,0) circle [radius=0.06];
\draw[fill][blue] (0,1) circle [radius=0.06];
\draw[fill][blue] (0,2) circle [radius=0.06];
\draw[fill][blue] (0,3) circle [radius=0.06];
\draw[fill][blue] (1,0) circle [radius=0.06];
\draw[fill][blue] (1,1) circle [radius=0.06];
\draw[fill][blue] (1,2) circle [radius=0.06];
\draw[fill][blue] (1,3) circle [radius=0.06];
\draw[fill][blue] (2,0) circle [radius=0.06];
\draw[fill][blue] (2,1) circle [radius=0.06];
\draw[fill][blue] (2,2) circle [radius=0.06];
\draw[fill][blue] (2,3) circle [radius=0.06];
\draw[fill][blue] (3,0) circle [radius=0.06];
\draw[fill][blue] (3,1) circle [radius=0.06];
\draw[fill][blue] (3,2) circle [radius=0.06];
\draw[fill][blue] (3,3) circle [radius=0.06];
\draw[fill][blue] (4,0) circle [radius=0.06];
\draw[fill][blue] (4,1) circle [radius=0.06];
\draw[fill][blue] (4,2) circle [radius=0.06];
\draw[fill][blue] (4,3) circle [radius=0.06];
\draw[fill][blue] (5,0) circle [radius=0.06];
\draw[fill][blue] (5,1) circle [radius=0.06];
\draw[fill][blue] (5,2) circle [radius=0.06];
\draw[fill][blue] (5,3) circle [radius=0.06];
\draw[fill][blue] (6,0) circle [radius=0.06];
\draw[fill][blue] (6,1) circle [radius=0.06];
\draw[fill][blue] (6,2) circle [radius=0.06];
\draw[fill][blue] (6,3) circle [radius=0.06];
\draw[fill][blue] (7,0) circle [radius=0.06];
\draw[fill][blue] (7,1) circle [radius=0.06];
\draw[fill][blue] (7,2) circle [radius=0.06];
\draw[fill][blue] (7,3) circle [radius=0.06];
\end{tikzpicture}
\end{figure}

\noindent and  $\chw_{j}^{i}(M_K)=\ns$ for all $(i,j)\in \ii_{(2,2)}$ and all function fields 
$K/k$ if and only if $M$ is an extension  of an element of $(\dmger)_{\wchow\ge -1}$  by an element of $\dmger_{\wchow\le -2}\lan 2\ra$; see Theorem \ref{tmain}(2).

 Similarly, the condition for $M\in \obj \dmger$ to belong to  $\obj \dmger\lan c\ra$ corresponds to $\ii=\z\times [0,c-1] $ (see part 1 of the theorem), 
and $M\in(\dmger)_{\wchow\ge -n}$ corresponds to $\ii=[n+1,+\infty)\times  [0,+\infty)$.

Other relevant staircase sets are 
introduced  and drown in  Definition \ref{defflec} and Corollary \ref{cmothomol}  below; those picture illustrate our term. 
\end{rema}



Now we prove a generalization of Theorem \ref{tmain} (consequently, the reader may consult  \S\ref{smotnot}, Proposition \ref{pwchow}(\ref{ip1}), and Definitions \ref{dcwh} and  \ref{deffdim}(6) for the notation used in the formulation) and prove in addition  that one can "bound dimensions" of the components of the corresponding "decompositions".

\begin{theo}\label{tstairs}
Let  $\ii\subset \z\times  [0,+\infty)$, $M\in \obj \dmger$. 
Then the following statements are valid.

 1. The vanishing of $\chw_{j}^{i}(M_K)$ for all function fields $K/k$ and all $(i,j)\in \ii$ is equivalent to 
 the same vanishing for all field extensions $K/k$.
   
2.  Assume  $R=\q$. Then the vanishing of $\chw_{j}^{i}(M_K)$ for all function fields $K/k$ and $(i,j)\in \ii$ is also equivalent to  $\chw_{j}^{i}(M_K)=\ns$ for all  $(i,j)\in \ii$ and a single universal domain  $K$  containing $k$.

3. Suppose that $\ii$  is a staircase set. 
Then the following conditions are equivalent.

A. $\chw_{j}^{i}(M_K)=\ns$ for all function fields $K/k$ and $(i,j) \in \ii$.

B. The object $l^{j}(M)$ (see Definition \ref{deffdim}(3)) belongs to $\dmgrj_{\wcho^{j} \ge -i+1}$ whenever $(i,j) \in \ii$.

C. 
For any $i\in \z$ there exists a choice of  $\wchow_{\le -i}M $ (see (\ref{ewd}))  that belongs to  $\dmger\lan a_{\ii,i}\ra$. 

D. $M$ belongs to the extension-closure of 
$\cup_{i\in \z }(\obj \chower[-i]\lan a_{\ii,i}\ra)$.\footnote{In this theorem we use the convention of Definition \ref{deffdim}(4) in the case $a_{\ii,i}=+\infty$.}\ 

E. There exists a choice of a weight complex (see \S\ref{swss}) for $M$ such that its $i$-th term is $j+1$-effective 
whenever $(i,j) \in \ii$.

4. For any staircase set $\ii$   and $M\in \dmger_{[a,b]}$ (for some $a\le b\in \z$) the (equivalent) conditions of the previous assertion are fulfilled if and only if $M$ belongs to the extension-closure of 
$\cup_{-b\le i\le -a }(\obj \chowe[-i]\lan a_{\ii,i}\ra)$.

5. Assume that $M$ is of dimension  at most $r\ge 0$ (see Definition \ref{deffdim}(2)) and that $\ii$ is a staircase set. Then Conditions A and B of  assertion 3 are equivalent to the following modifications of Condition C (resp. D):   there exists a choice of  $\wchow_{\le -i}M $  that belongs to  $\obj (d_{\le r-a_{\ii,i}}\dmger)\lan a_{\ii,i}\ra $ (resp. $M$ belongs to the extension-closure of $\cup_{i\in \z }(\obj d_{\le r-a_{\ii,i}} \chower[-i]\lan a_{\ii,i}\ra)$). 
  Moreover, a similar modification can also be made in assertion 4.
	
	6. Assume that $\ii_j$ 
	are staircase sets for $j$ running through some index set $J$. Then for a fixed  $M$
the (equivalent) conditions of assertion 3 are fulfilled 
 for $\ii=\ii_j$ (for all $j\in J$) if and only if they are fulfilled for $\ii=\cup_j \ii_j$. 

\end{theo}
\begin{proof}
Assertions 1 and 2  follow from Proposition \ref{ptestf} immediately. 

3,4. We apply Remark \ref{rstair}(1). 
According to Theorem \ref{tmain}(2) (cf. also its proof), the vanishing of 
$CWH_j^{i}(M_K) $ for all function fields $K/k$ and $(i,j) \in \ii_{(i_0,j_0)}$ is equivalent to 
$l^{j_0}(M)\in \dm_{gm}^{R,j_0}{}_{\wcho^{j_0} \ge -i_0+1}$. 
The combination of these equivalences for all $(i_0,j_0)\in \ii$ yields the equivalence of Conditions A and  B in assertion 3.

Next, Condition  B implies Condition  C for a fixed $i\in \z$ if $a_{\ii,i}<+\infty$ according to  Theorem \ref{tmain}(2) (since $(i,a_{\ii,i}-1) \in \ii$; cf. also 
Proposition 4.2.1 of \cite{bsosnl}). If $a_{\ii,i}=+\infty$ then one should apply Theorem \ref{tmain}(3) instead.

Now assume that $M$ satisfies Condition C and belongs to  $\dmger_{[a,b]}$ for some $a\le b\in \z$. Then $M$ is also an object of  $\dmger \lan a_{\ii,b}\ra$ (see Remark \ref{rwceff}(2)). 
 Thus we can modify the choices of   $\wchow_{\le -i}M$ coming from Condition C (for $-i\notin [a,b-1]$) by setting   $\wchow_{\le -i}M=0$ for $-i<a$ and $\wchow_{\le -i}M=M$ for $-i\ge b$.  Then the corresponding triangles (\ref{wdeck3}) yield that
(for the motives $M^i$ coming from this choice of a Chow-weight Postnikov tower for $M$) we have $M^i\in \dmger_{\wchow=0}\lan  a_{\ii,i}\ra$ (see Remark \ref{rwceff}(1)), and we obtain Condition E. Next, 
 Proposition \ref{pbwcomp}(\ref{iwext}) yields that $M$  belongs to the extension-closure of 
$\cup_{-b\le i\le -a }\chowe[-i]\lan a_{\ii,i}\ra$ 
(i.e., we 
have proved the corresponding implication from  assertion 4); we clearly also obtain Condition D.

Finally, assume that $\wcr(M)=(M^i)$ for $M^i$ as in Condition E (i.e.,  $M^i\in \obj \chower \lan  a_{\ii,i}\ra$). 
 Since (for any $(i,j)$) the group $CWH_j^{i}(M_K) $ is a subquotient of $\chowm_j(M_K^i,R)$, and the latter group vanishes whenever $(i,j)\in \ii$ (by Proposition \ref{pvan}(2)), we obtain Condition A.

This finishes the proof.

5. First we note that the class $\obj\dmger\lan a_{\ii,i}\ra \cap \obj d_{\le r}\dmger$ equals $ \obj (d_{\le r-a_{\ii,i}}\dmger)\lan a_{\ii,i}\ra$ and $\obj \chower\lan a_{\ii,i}\ra\cap \obj d_{\le r}\dmger\break =\obj \chower\lan a_{\ii,i}\ra$ according to Proposition \ref{pcrulemma}(\ref{iсru3}) (certainly, this statement implies in particular that all these intersections are zero if $a_{\ii,i}=+\infty$).

Thus it suffices to verify that in the equivalences given by assertions 3 and 4 for $M$ one may replace  the classes  $\obj\dmger\lan a_{\ii,i}\ra$ and $\obj \chower[-i]\lan a_{\ii,i}\ra $
by their intersections with $\obj d_{\le r}\dmger$.

As can be easily seen from the proof of these  two assertions, to establish the resulting statement it suffices to verify the corresponding versions of Theorem \ref{tmain}(2,3). The latter can be easily achieved  via replacing the usage of Proposition \ref{pcrulemma}(\ref{iсru3})  in their proofs (thus actually the corresponding modification should be made for Proposition \ref{pcwh}(5)) by the application of Proposition \ref{pcrulemma}(\ref{iсru6}). 

6. Obviously, 
 $\cup_{j\in J}\ii_j$  is a staircase set. Thus it suffices to note that the equivalence statement in question is obviously fulfilled for condition A in assertion 3. 
\end{proof}

\begin{rema}\label{rcomplexes}
1. The reader can easily check that everywhere in the proofs of Theorems \ref{tmain} and \ref{tstairs} (and of the prerequisites to them) we could have replaced $\dmger$ by $K^b(\chower)$ (certainly, then we would have to replace $\dmgrj$ by the localization $K^b(\chower)/(K^b(\chower)\lan j+1\ra)$, whereas the Chow weight structure for $K^b(\chower)$ is just the stupid weight structure mentioned in Remark \ref{rstws}(1)).
The corresponding  statements may be said to be more general than 
 their $\dmger$-versions since there can exist objects of   $K^b(\chower)$ that cannot be presented as weight complexes of motives. Besides, these  results are easier to understand for the readers that are not well-acquainted with Voevodsky motives. 
Their
disadvantage is that they 
 hardly can be used for controlling "substantially mixed" motivic phenomena; this includes motivic homology (cf. Corollary \ref{cmothomol} below).

We will  apply the 
$K^b(\chower)$-version of Theorem \ref{tstairs} to complexes of length $1$. Note that we could have considered these complexes as objects of $\dmger$  (see Remark \ref{rwc}(1)); 
 yet looking at $K^b(\chower)$ instead makes our argument somewhat "more elementary". 

2. Part 6 of our theorem says that the intersections of subclasses of $\obj \dmger$ corresponding to the staircase sets $\ii_j$ is "as small as possible". This statement appears to be interesting and quite non-trivial if one describes these subclasses using condition D in part 3. The authors have no idea how to prove it avoiding our results. 
\end{rema}

Now we consider two relevant particular cases of our theorem. 

It will be convenient for us to use a certain filtration of the class $\dmger_{\wchow \ge 0}$ (each of whose steps contains $\dmger_{\wchow \ge 1}$). 

\begin{defi}\label{defflec}
For any $c\ge 0$ we will use the notation $\dmger_{\ge 0}^{\lan c \ra}$ for the $\dmger$-envelope of the set $(\cup_{i>0}\chower[i])\cup \chower \lan c\ra$.

Respectively (cf. Corollary \ref{cefflec}(I)) we will write $\ii_0^{\lan c \ra}$ for the staircase set $[1,+\infty) \times  [0,+\infty) \cup \ns \times [0,c-1]$; 
the points of     $\ii_0^{\lan 3 \ra}$ are marked  in grey on the following picture: 
\begin{figure}[h!]
\center
\begin{tikzpicture}
\draw [<->] (0,6.6) node [left] {$j$} -- (0,0) -- (7.6,0)node [below] {$i$} ;
\fill[fill=gray!40] (0,0) --(0,2)-- (1,2)--(1,6.6)--(7.6,6.6) -- (7.6,0) -- cycle;
\draw[fill][blue] (1,1) circle [radius=0.06];
\draw[fill][blue] (0,0) circle [radius=0.06];
\draw[fill][blue] (2,2) circle [radius=0.06];
\draw[fill][blue] (3,3) circle [radius=0.06];
\draw[fill][blue] (4,4) circle [radius=0.06];
\draw[fill][blue] (5,5) circle [radius=0.06];
\draw[fill][blue] (6,6) circle [radius=0.06];
\draw[fill][blue] (1,2) circle [radius=0.06];
\draw[fill][blue] (2,1) circle [radius=0.06];
\draw[fill][blue] (3,1) circle [radius=0.06];
\draw[fill][blue] (1,3) circle [radius=0.06];
\draw[fill][blue] (4,1) circle [radius=0.06];
\draw[fill][blue] (5,1) circle [radius=0.06];
\draw[fill][blue] (1,4) circle [radius=0.06];
\draw[fill][blue] (1,5) circle [radius=0.06];
\draw[fill][blue] (0,1) circle [radius=0.06];
\draw[fill][blue] (0,2) circle [radius=0.06];
\draw[fill][blue] (0,3) circle [radius=0.06];
\draw[fill][blue] (0,4) circle [radius=0.06];
\draw[fill][blue] (0,5) circle [radius=0.06];
\draw[fill][blue] (0,6) circle [radius=0.06];
\draw[fill][blue] (1,0) circle [radius=0.06];
\draw[fill][blue] (2,0) circle [radius=0.06];
\draw[fill][blue] (3,0) circle [radius=0.06];
\draw[fill][blue] (4,0) circle [radius=0.06];
\draw[fill][blue] (5,0) circle [radius=0.06];
\draw[fill][blue] (6,0) circle [radius=0.06];
\draw[fill][blue] (6,1) circle [radius=0.06];
\draw[fill][blue] (6,2) circle [radius=0.06];
\draw[fill][blue] (6,3) circle [radius=0.06];
\draw[fill][blue] (6,4) circle [radius=0.06];
\draw[fill][blue] (6,5) circle [radius=0.06];
\draw[fill][blue] (5,2) circle [radius=0.06];
\draw[fill][blue] (5,3) circle [radius=0.06];
\draw[fill][blue] (5,4) circle [radius=0.06];
\draw[fill][blue] (5,6) circle [radius=0.06];
\draw[fill][blue] (4,2) circle [radius=0.06];
\draw[fill][blue] (4,3) circle [radius=0.06];
\draw[fill][blue] (3,2) circle [radius=0.06];
\draw[fill][blue] (4,5) circle [radius=0.06];
\draw[fill][blue] (4,6) circle [radius=0.06];
\draw[fill][blue] (3,4) circle [radius=0.06];
\draw[fill][blue] (3,5) circle [radius=0.06];
\draw[fill][blue] (3,6) circle [radius=0.06];
\draw[fill][blue] (2,3) circle [radius=0.06];
\draw[fill][blue] (2,4) circle [radius=0.06];
\draw[fill][blue] (2,5) circle [radius=0.06];
\draw[fill][blue] (2,6) circle [radius=0.06];
\draw[fill][blue] (1,6) circle [radius=0.06];
\draw[fill][blue] (7,0) circle [radius=0.06];
\draw[fill][blue] (7,1) circle [radius=0.06];
\draw[fill][blue] (7,2) circle [radius=0.06];
\draw[fill][blue] (7,3) circle [radius=0.06];
\draw[fill][blue] (7,4) circle [radius=0.06];
\draw[fill][blue] (7,5) circle [radius=0.06];
\draw[fill][blue] (7,6) circle [radius=0.06];
\draw[fill][blue] (-1,0) circle [radius=0.06];
\draw[fill][blue] (-1,1) circle [radius=0.06];
\draw[fill][blue] (-1,2) circle [radius=0.06];
\draw[fill][blue] (-1,3) circle [radius=0.06];
\draw[fill][blue] (-1,4) circle [radius=0.06];
\draw[fill][blue] (-1,5) circle [radius=0.06];
\draw[fill][blue] (-1,6) circle [radius=0.06];
\draw[fill][blue] (-2,0) circle [radius=0.06];
\draw[fill][blue] (-2,1) circle [radius=0.06];
\draw[fill][blue] (-2,2) circle [radius=0.06];
\draw[fill][blue] (-2,3) circle [radius=0.06];
\draw[fill][blue] (-2,4) circle [radius=0.06];
\draw[fill][blue] (-2,5) circle [radius=0.06];
\draw[fill][blue] (-2,6) circle [radius=0.06];
\draw[fill][blue] (-3,0) circle [radius=0.06];
\draw[fill][blue] (-3,1) circle [radius=0.06];
\draw[fill][blue] (-3,2) circle [radius=0.06];
\draw[fill][blue] (-3,3) circle [radius=0.06];
\draw[fill][blue] (-3,4) circle [radius=0.06];
\draw[fill][blue] (-3,5) circle [radius=0.06];
\draw[fill][blue] (-3,6) circle [radius=0.06];
\draw[fill][blue] (-4,0) circle [radius=0.06];
\draw[fill][blue] (-4,1) circle [radius=0.06];
\draw[fill][blue] (-4,2) circle [radius=0.06];
\draw[fill][blue] (-4,3) circle [radius=0.06];
\draw[fill][blue] (-4,4) circle [radius=0.06];
\draw[fill][blue] (-4,5) circle [radius=0.06];
\draw[fill][blue] (-4,6) circle [radius=0.06];
\end{tikzpicture}
\end{figure}
\end{defi}


\begin{coro}\label{cefflec} I. For $M\in \obj \dmger$ and $c\ge 0$ the following conditions are equivalent.

\begin{enumerate}
\item\label{ielec1} $M\in \dmger_{\ge 0}^{\lan c \ra}$. 

\item\label{ielec2} 
$\chw_{j}^{i}(M_K)=\ns$ for all function fields 
	 $K/k$ and $(i,j) \in \ii_0^{\lan c \ra}$.

\item\label{ielecext} $M$ is an extension of an element of $\dmger_{\wchow\ge 1}$ by an object of $\chower\lan c\ra$. 

\item\label{ielec3} $M\in \dmger_{\wchow\ge 0}$ and  $\chowm_{j}(M_K)=\ns$ (see Definition \ref{deffdim}(6) for this notation) for all function fields $K/k$ and $0\le j<c$.
\end{enumerate}

Moreover, if $R=\q$ then it suffices to verify the aforementioned vanishing conditions for $K$ that is a single universal domain containing $k$.

II. If $c_1,c_2\ge 0$ then  $\dmger_{\ge 0}^{\lan c_1 \ra}\otimes \dmger_{\ge 0}^{\lan c_2 \ra}\subset \dmger_{\ge 0}^{\lan c_1+c_2 \ra}$.
\end{coro}

 \begin{proof} I. The equivalence of conditions I.\ref{ielec1} and I.\ref{ielec2} is immediate from Theorem \ref{tstairs}(3) (see conditions 3.A and 3.D of the theorem). Furthermore, these conditions are equivalent to the fact that we can take $\wchow_{\le -1}M=0$ 
 and $\wchow_{\le 0}M\in \obj \dmger\lan c \ra$. Thus $M\in \dmger_{\wchow\ge 0}$; hence Proposition \ref{pbw}(\ref{iwd0}) implies that the aforementioned choice of $\wchow_{\le 0}M$ belongs to $ \dmger_{\wchow=0}\cap \obj \dmger\lan c \ra=\dmger_{\wchow=0}\lan c \ra$ (see 
  Corollary \ref{cchows}(1)). Therefore the corresponding choice of weight decomposition of $M$ gives condition I.\ref{ielecext} for $M$. Next, condition I.\ref{ielecext} clearly implies condition I.\ref{ielec1}. 
	
	Now, we have just checked that $M$ belongs to $ \dmger_{\wchow\ge 0}$ whenever it belongs to $ \dmger_{\ge 0}^{\lan c \ra}$. Thus 
	$\chw_{j}^{0}(M_K)=\chowm_{j}(M_K)$ for all $K/k$ and $j\ge 0$ (see Proposition \ref{pcwh}(7)); hence 	conditions I.\ref{ielec1} and I.\ref{ielec2} together imply condition  I.\ref{ielec3}.  Conversely, if condition  I.\ref{ielec3} is fulfilled then   $\chw_{j}^{i}(M_K)=\ns$ for all $K/k$ and all $(i,j)\in  [1,+\infty) \times  [0,+\infty)$ according to Theorem \ref{tmain}(3) and it remains to apply Proposition \ref{pcwh}(7) once again to obtain condition  I.\ref{ielec3}.

 Lastly,  the "moreover" part of our proposition  follows from Proposition \ref{ptestf} similarly to Theorem \ref{tstairs}(2). 

II. Obvious from our definitions. 
\end{proof}

Next we apply Theorem \ref{tstairs} to cones of morphisms of Chow motives.

\begin{coro}\label{ccones} 
Let $h:N\to O$ be a $\chower$-morphism and $0\le r_1\le r_2\in \z$. Then the following conditions are equivalent.

1. $\chowm_j^{R,K}(h)$ is a bijection for 
$j\in [0,r_1-1]$ and is a surjection for 
$j\in [r_1,r_2-1]$ for all function fields $K/k$.

2. The complex $N\to O$ is homotopy equivalent (i.e., $K^b(\chower)$-isomorphic) to a complex $N'\lan r_1\ra\to O'\lan r_2\ra$ for some $N',O'\in \obj \chower$. 

3. There exists  $h'\in \chower (O,N)$ such that the morphism $\id_O-h\circ h'$ factors through $\chower\lan r_2\ra$, and $\id_N-h'\circ h$ factors through $\chower\lan r_1\ra$.


\end{coro}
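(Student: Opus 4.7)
My plan is to apply the $K^b(\chower)$-version of Theorem \ref{tstairs} (see Remark \ref{rcomplexes}) to the two-term complex $M_\bullet := (N \xrightarrow{h} M) \in K^b(\chower)$, placed with $N$ in cohomological degree $-1$ and $M$ in degree $0$. With respect to the stupid weight structure on $K^b(\chower)$ the weight complex of $M_\bullet$ is $M_\bullet$ itself, so $\chw^j_{-1,K}(M_\bullet) = \ke(\chow_j^{R,K}(h))$, $\chw^j_{0,K}(M_\bullet) = \cok(\chow_j^{R,K}(h))$, and $\chw^j_{i,K}(M_\bullet) = 0$ for $i \notin \{-1,0\}$. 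Condition 1 of the corollary then translates exactly to the vanishing of $\chw^j_{i,K}(M_\bullet)$ for all function fields $K/k$ and all $(i,j)$ in the staircase set
\begin{equation*}
I = \{(i,j) : i \ge 1\} \cup \{(0,j) : 0 \le j \le c_2-1\} \cup \{(-1,j) : 0 \le j \le c_1 - 1\},
\end{equation*}
for which $a_{I,-1} = c_1$, $a_{I,0} = c_2$, $a_{I,i} = 0$ for $i \le -2$, and $a_{I,i} = +\infty$ for $i \ge 1$ (the staircase property uses $c_1 \le c_2$).

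Since $M_\bullet \in K^b(\chower)_{[0,1]}$, assertion 4 of Theorem \ref{tstairs} yields that Condition 1 is equivalent to the existence of a distinguished triangle
\begin{equation*}
N''\lan c_1\ra[1] \to M_\bullet \to M''\lan c_2\ra \to N''\lan c_1\ra[2]
\end{equation*}
for some $N'', M'' \in \chower$. The connecting morphism lives in $K^b(\chower)(M''\lan c_2\ra, N''\lan c_1\ra[2])$, which vanishes since the source and target are concentrated in disjoint cohomological degrees (any chain map between them is zero). Hence the triangle splits and $M_\bullet \cong (N''\lan c_1\ra \xrightarrow{0} M''\lan c_2\ra)$ in $K^b(\chower)$, giving Condition 2. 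The converse $2 \Rightarrow 1$ is immediate by computing $\chw^j_{*,K}$ of a two-term complex $(N'\lan c_1\ra \to M'\lan c_2\ra)$ via Proposition \ref{pvan}(2).

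To close the cycle with Condition 3 I would unwind a homotopy equivalence $\phi: M_\bullet \to (N'\lan c_1\ra \xrightarrow{h'_0} M'\lan c_2\ra)$ with inverse $\psi$ and homotopy $s$ satisfying $\psi \circ \phi - \id_{M_\bullet} = ds + sd$. Only the component $s^0 : M \to N$ is nontrivial, and setting $h' := -s^0$ the homotopy identities at degrees $-1$ and $0$ become
\begin{equation*}
\psi^{-1}\phi^{-1} = \id_N - h' \circ h \quad \text{and} \quad \psi^0 \phi^0 = \id_M - h \circ h',
\end{equation*}
which visibly factor through $N'\lan c_1\ra \in \chower\lan c_1\ra$ and $M'\lan c_2\ra \in \chower\lan c_2\ra$ respectively, giving Condition 3. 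Conversely, Condition $3 \Rightarrow$ Condition 1 follows by applying $\chow_j^{R,K}$ to the two identities and invoking Proposition \ref{pvan}(2), which annihilates $\chower\lan c_i\ra$ for $j < c_i$: this forces $\chow_j^{R,K}(h)$ to be split injective for $j < c_1$ and split surjective for $j < c_2$, matching the requirements of Condition 1.

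The main obstacle is the bookkeeping: choosing the staircase set $I$ so that its $a_{I,i}$-data exactly encodes the desired effectivity on each term of $M_\bullet$, and verifying the vanishing of $K^b(\chower)(M''\lan c_2\ra, N''\lan c_1\ra[2])$ which turns the abstract "extension of $c_i$-effective Chow motives" produced by Theorem \ref{tstairs}(4) into the concrete two-term complex demanded by Condition 2.
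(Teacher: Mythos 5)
Your equivalence $(2)\Leftrightarrow(3)$ is a genuinely more elementary argument than the paper's: the paper detours through the localized category $\dmgr^{c_2-1}$, deduces a homotopy equivalence there, and then lifts along the localization functor (using Proposition \ref{ploc}(3)), whereas you unwind the homotopy data of the two-term complexes directly. This is a real simplification and the bookkeeping ($s^0=-h'$, the identities at degrees $-1$ and $0$, and the factorizations through $N'\lan c_1\ra$ and $M'\lan c_2\ra$) is correct.

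However, the implication $(1)\Rightarrow(2)$ contains a genuine error. You claim that Condition~1 yields a triangle $N''\lan c_1\ra[1]\to M_\bullet\to M''\lan c_2\ra\to N''\lan c_1\ra[2]$, and then split it since $K^b(\chower)(M''\lan c_2\ra,N''\lan c_1\ra[2])=0$. The orientation of this triangle is wrong, and the consequent claim that $M_\bullet$ is a direct sum (zero differential) is false. The triangle produced by the theory (Remark \ref{rloc}(2), or equivalently Condition~C of Theorem~\ref{tstairs}(3) together with Proposition~\ref{pbw}(\ref{igen})) is the weight decomposition $\wchow_{\le 0}M_\bullet\to M_\bullet\to \wchow_{\ge 1}M_\bullet$, which for $M_\bullet\in K^b(\chower)_{[0,1]}$ reads $M''\lan c_2\ra\to M_\bullet\to N''\lan c_1\ra[1]\to M''\lan c_2\ra[1]$ --- the weight-$0$ term sits on the left. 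Its connecting morphism lies in $K^b(\chower)(N''\lan c_1\ra[1],M''\lan c_2\ra[1])\cong\chower(N''\lan c_1\ra,M''\lan c_2\ra)$, which does \emph{not} vanish. Rotating this triangle exhibits $M_\bullet\cong\co\bigl(N''\lan c_1\ra\to M''\lan c_2\ra\bigr)$, which is exactly Condition~2, but with a possibly nonzero differential. A concrete counterexample to your splitting claim: take $N=M=R\lan c_2\ra$ and $h=2\cdot\id$. Condition~1 holds trivially (all the Chow groups in the relevant range vanish by Proposition~\ref{pvan}(2)), yet $(R\lan c_2\ra\xrightarrow{2}R\lan c_2\ra)$ has endomorphism ring $R/2R$ in $K^b(\chower)$ and so is not a nonzero direct sum $A[1]\oplus B$ with $A,B\in\obj\chower$ when $R=\zop$. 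Separately, passing from Condition~D (extension-closure) to a single triangle is itself a nontrivial step; you should invoke Remark~\ref{rloc}(2) (as the paper does) or argue via Condition~C and a weight decomposition rather than read it off Theorem~\ref{tstairs}(4) directly.
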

\begin{proof}
(1)$\iff$ (2).
We take $M=\co h\in \obj K^b(\chower)$ (or in $\dmger$; we put $N$ in degree $-1$ and put $O$ in degree $0$), and consider the index set $\ii=[-1,+\infty)\times [0,r_1-1]\cup [0,+\infty)\times [r_1,r_2-1]$ (see \S\ref{snotata}). 

We immediately obtain the equivalence of our condition 1  to the vanishing of $\chw_{j}^{i}(M_K)$ for $i\in \ii$. Combining the equivalence of Conditions A and D in Theorem \ref{tstairs}(3)  (in the version mentioned in  Remark \ref{rcomplexes}(1))  with  Remark \ref{rloc}(1), we obtain the result. 

(2)$\implies$(3). We have $l^{r_2-1}(M)\cong l^{r_2-1}(N'\lan r_1\ra[1])$. Next, this isomorphism clearly gives a similar isomorphism 
  in  the category $K^b(\hw_{\cho^{r_2-1}})$. Hence $M$ (considered as a $\hw_{\cho^{r_2-1}}$-complex) is homotopy equivalent to $N'\lan r_1\ra[1]$; denote the corresponding morphisms $M\to N'\lan r_1\ra[1]\to M$ by $f$ and $g$, respectively. Since $\id_{M}$ is 
$\hw_{\cho^{r_2-1}}$-homotopic to $g\circ f$, there exists $h''\in \hw_{\cho^{r_2-1}}(O,N)$ such that $\id_N-g\circ f=h''\circ h$ and $h\circ h''=\id_O$.
Lifting $h''$ to  a morphism $h'\in \chower (O,N)$ (see Proposition \ref{ploc}(3)), we obtain the desired implication.

(3)$\implies$(1). Arguing as above, we see that  in the category $K^b(\hw_{\cho^{r_2-1}})$ the morphism $\id_M$ factors through an object of $\chower\lan r_1\ra[1]$. The desired Chow-weight homology vanishing conditions  follow immediately (cf. the proof of Theorem \ref{tmain}(2)). 
\end{proof}

\begin{rema}\label{rcones}

1. If $N=\mgr(Q)$ and $O=\mgr(P)$  for some $P,Q\in \spv$ then condition 3 of the corollary can be easily translated into the following assumption: the cycle $\id_O-h\circ h'$  in 
 $P\times P$ (here clearly $\id_O$ is represented by the diagonal) is rationally equivalent to a cycle supported on $P'\times P$, and $\id_N-h'\circ h$ is rationally equivalent to a cycle 
supported on $Q'\times Q$, where $P'\subset P$ and $Q'\subset Q$ are some closed subvarieties of codimensions  $r_2$ and $r_1$, respectively (see Proposition \ref{pcrulemma}(\ref{iсru0}--\ref{iсru2}) and its proof). 

Moreover, if $h$ comes from a morphism $Q\to P$ then the cycle class $h\circ h'$ is clearly supported on the product of $P$ by the image of $h$.


2. Assume  $M\in d_{\le m}K^b(\chower)$ (for some $m\ge 0$; this is certainly the case if $N$ and $O$ are of dimension at most $m$). 
Then $\chw_{j}^{i}(M_K)=\ns$ for $j$ greater than 
$m$ (and all $i\in \z$). 
 Thus if $r_2$ is greater than 
  $m$ then our result yields that $h$ splits; if $r_1>m$ then $h$ is an isomorphism.
  The first of these observations generalizes Theorem 3.18 of \cite{vialmotab} (where the case $R=\q$ was considered).
	
	\end{rema}

\subsection{Higher Chow-weight homology criteria and  motivic homology}\label{shchw}

Now we invoke Proposition \ref{predhchow}.

\begin{pr}\label{phcwh}
Let $\ii\subset \z\times [0,+\infty)$ 
and $M\in \obj \dmger$ be fixed.

Consider the following conditions on $M$.

\begin{enumerate}

\item\label{ir1} For some function $f_M:\ii\to  [0,+\infty)$  we have $\chw_{j-f_M(i,j)}^{i}(M_K,R,f_M(i,j))=\ns$ 
 for all $(i,j)\in \ii$ and all function fields $K/k$.

\item\label{ir2}  $\chw_{j}^{i}(M_K,R)=\ns$ for all $(i,j)\in \ii$ and all function fields $K/k$.

\item\label{irrat} For all rational extensions $K/k$ and $(i,j)\in \ii$  we have $\chw_{j-1}^{i}(M_K,1)=\ns$.

\item\label{ir3}  $\chw_{0}^i(M_K,j)=\ns$ for all $(i,j)\in \ii$ and all function fields $K/k$.

\item\label{ir4} $\chw^i_a (M_K,j-a)=\ns$ for all $(i,j)\in \ii$, $a\in \z$, and all  field extensions $K/k$.

\end{enumerate}

Then the following statements are valid.

1. Condition \ref{ir4} implies conditions \ref{ir3} and \ref{irrat}, either of the latter two conditions 
implies  condition \ref{ir2}, whereas the first two conditions are equivalent. 

2. Let $\ii$ be  a staircase set  (in the sense of Definition \ref{dreasi}). 
Then our conditions \ref{ir1}--\ref{ir4} are equivalent.

3. Assume $R=\q$. Then our conditions are also equivalent to the vanishing of  $\chw_{j}^{i}(M_K)$ for  a single universal domain $K$  containing $k$ and all $(i,j)\in \ii$.

\end{pr}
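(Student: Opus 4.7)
The plan is to establish the three parts in order. For part~1, each listed implication follows from Proposition~\ref{predhchow} together with Remark~\ref{rrat}(1). Specializing $a = 0$ (respectively $a = j-1$) in condition~\ref{ir4} yields condition~\ref{ir3} (respectively, after restricting to rational $K$, condition~\ref{irrat}). For \ref{ir3}$\Rightarrow$\ref{ir2}, any function field $K_1/k$ of dimension $d$ arises (by iterating Remark~\ref{rrat}(1)) as the residue of a rank-$j$ geometric valuation on a function field $K_2$ of dimension $d + j$; Proposition~\ref{predhchow} then realizes $\chw^{j}_{i, K_1}(M)$ as a retract of $\chw^{0, j}_{i, K_2}(M) = 0$. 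The implication \ref{irrat}$\Rightarrow$\ref{ir2} is identical with $r = 1$ and $K_2$ rational. Finally, condition~\ref{ir2} is the case $f_M \equiv 0$ of condition~\ref{ir1}, and the converse is the same retract argument with $r = f_M(i, j)$.

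For part~2, it remains to establish \ref{ir2}$\Rightarrow$\ref{ir4} under the staircase assumption; combined with part~1 this closes the equivalence. The essential input is Theorem~\ref{tstairs}(3): for staircase $I$, condition~\ref{ir2} is its Condition~A, equivalent to Condition~B, namely $l^{j}(M) \in \dmgr^{j}_{\wchow \ge -i + 1}$ for every $(i, j) \in I$. Given this, for any $a \in \z$ and any field extension $K/k$, I would apply Proposition~\ref{pcwh}(4) to $N = l^{j}(M)$ with parameters $r = j$, $n = i - 1$, and $(j', l') = (a, j - a)$: the hypothesis $i' > n$ reads $i > i - 1$, and the constraint $j' \le r - l'$ reads $a \le j - (j - a) = a$, both trivially satisfied. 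Hence $\chw^{a, j-a}_{i, K}(l^{j}(M)) = 0$, and since $a + (j - a) = j$, Proposition~\ref{pcwh}(2) identifies this group with $\chw^{a, j-a}_{i, K}(M)$, yielding condition~\ref{ir4}.

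For part~3, the $R = \q$ case, applying Proposition~\ref{ptestf}(II) to the (shifted) complex computing Chow-weight homology reduces vanishing over all function fields to vanishing over a single universal domain containing $k$. The main obstacle is the staircase step in part~2: one has to recognize that the diagonal indices $(a, j-a)$ in condition~\ref{ir4} all have the constant sum $j$, matching exactly the $\dmgr^j$-factorization from Proposition~\ref{pcwh}(2), so that the single vertical Chow-weight bound supplied by Condition~B of Theorem~\ref{tstairs}(3) controls the entire diagonal family — including the less obvious negative-$a$ range — through one application of Proposition~\ref{pcwh}(4).
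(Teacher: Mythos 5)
Your proof is correct and follows essentially the same route as the paper (Proposition~\ref{predhchow} together with Remark~\ref{rrat}(1) for part~1, Theorem~\ref{tstairs}(3) combined with Proposition~\ref{pcwh}(4) for part~2, and Proposition~\ref{ptestf} for part~3), the only cosmetic difference being that you enter via Condition~B of Theorem~\ref{tstairs}(3) while the paper cites the equivalent Condition~D. One small point worth noting in your part~2: you take the twist parameter of Proposition~\ref{pcwh}(4) to be $a$, which can be negative although that proposition carries the standing hypothesis $r,j\ge 0$; this is harmless because the proof of that proposition rests solely on Proposition~\ref{pvan}(2), which allows arbitrary $j\in\z$, and the paper's own argument makes the same silent extension.
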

\begin{proof}
1. Clearly,  condition \ref{ir4} is the strongest of the five, whereas condition
\ref{ir1} follows from condition \ref{ir2} and \ref{ir3}. The remaining implications  are given  by Proposition \ref{predhchow} (see also Remark \ref{rhuge}(1)).

2.  Since the first two conditions are equivalent,  it suffices to verify that condition \ref{ir2} implies condition \ref{ir4}.  

By Theorem \ref{tstairs}(3), $M$ satisfies Condition D of this theorem. Hence Proposition \ref{pcwh}(4) yields the implication in question (cf. the proof of Theorem \ref{tstairs}(3), D $\implies$ A).

3. Similarly to the setting of Theorem \ref{tstairs}(2), it suffices to combine assertion 2 with Proposition \ref{ptestf}. \end{proof}

 
Now we describe 
 an interesting particular case of the proposition; recall that  the homotopy $t$-structure  $\thomr$ was mentioned in Remark \ref{rhomr}.

\begin{coro}\label{cmothomol}
Let $M\in \obj \dmger$. Then the following conditions are equivalent.

1. $M\in \dmer^{\thomr\le 0}$ ($=\dmerb{}^{\thomr\le 0}$; one may say that $M$ is {\it motivically 
 connective}). 

2.  $\chowm_{0}(M_K,R,l)=\ns$ for all $l<0$ and all function fields $K/k$.

3. 
  Conditions \ref{ir1}--\ref{ir4} of the previous proposition for  $\ii=\{(i,j):\ i>j\ge 0\}$ are fulfilled (note that it suffices to verify only one of these conditions); the points of  $\ii$ are marked in grey on  the following picture:
	\begin{figure}[h!]\label{f2}
\center
\begin{tikzpicture}
\draw [<->] (0,5.6) node [left] {$j$} -- (0,0) -- (6.6,0)node [below] {$i$} ;
\fill[fill=gray!40] (1,0) --(1,0.1)-- (2,0.1)--(2,1)--(3,1)--(3,2)--(4,2)--(4,3)--(5,3)--(5,4)--(6,4)-- (6,5) -- (6.6,5)-- (6.6,0) -- cycle;
\draw[fill][blue] (1,1) circle [radius=0.06];
\draw[fill][blue] (0,0) circle [radius=0.06];
\draw[fill][blue] (2,2) circle [radius=0.06];
\draw[fill][blue] (3,3) circle [radius=0.06];
\draw[fill][blue] (4,4) circle [radius=0.06];
\draw[fill][blue] (5,5) circle [radius=0.06];
\draw[fill][blue] (1,2) circle [radius=0.06];
\draw[fill][blue] (2,1) circle [radius=0.06];
\draw[fill][blue] (3,1) circle [radius=0.06];
\draw[fill][blue] (1,3) circle [radius=0.06];
\draw[fill][blue] (4,1) circle [radius=0.06];
\draw[fill][blue] (5,1) circle [radius=0.06];
\draw[fill][blue] (1,4) circle [radius=0.06];
\draw[fill][blue] (1,5) circle [radius=0.06];
\draw[fill][blue] (0,1) circle [radius=0.06];
\draw[fill][blue] (0,2) circle [radius=0.06];
\draw[fill][blue] (0,3) circle [radius=0.06];
\draw[fill][blue] (0,4) circle [radius=0.06];
\draw[fill][blue] (0,5) circle [radius=0.06];
\draw[fill][blue] (1,0) circle [radius=0.06];
\draw[fill][blue] (2,0) circle [radius=0.06];
\draw[fill][blue] (3,0) circle [radius=0.06];
\draw[fill][blue] (4,0) circle [radius=0.06];
\draw[fill][blue] (5,0) circle [radius=0.06];
\draw[fill][blue] (6,0) circle [radius=0.06];
\draw[fill][blue] (6,1) circle [radius=0.06];
\draw[fill][blue] (6,2) circle [radius=0.06];
\draw[fill][blue] (6,3) circle [radius=0.06];
\draw[fill][blue] (6,4) circle [radius=0.06];
\draw[fill][blue] (6,5) circle [radius=0.06];
\draw[fill][blue] (5,2) circle [radius=0.06];
\draw[fill][blue] (5,3) circle [radius=0.06];
\draw[fill][blue] (5,4) circle [radius=0.06];
\draw[fill][blue] (4,2) circle [radius=0.06];
\draw[fill][blue] (4,3) circle [radius=0.06];
\draw[fill][blue] (3,2) circle [radius=0.06];
\draw[fill][blue] (4,5) circle [radius=0.06];
\draw[fill][blue] (3,4) circle [radius=0.06];
\draw[fill][blue] (3,5) circle [radius=0.06];
\draw[fill][blue] (2,3) circle [radius=0.06];
\draw[fill][blue] (2,4) circle [radius=0.06];
\draw[fill][blue] (2,5) circle [radius=0.06];
\draw[fill][blue] (-1,0) circle [radius=0.06];
\draw[fill][blue] (-1,1) circle [radius=0.06];
\draw[fill][blue] (-1,2) circle [radius=0.06];
\draw[fill][blue] (-1,3) circle [radius=0.06];
\draw[fill][blue] (-1,4) circle [radius=0.06];
\draw[fill][blue] (-1,5) circle [radius=0.06];
\draw[fill][blue] (-2,0) circle [radius=0.06];
\draw[fill][blue] (-2,1) circle [radius=0.06];
\draw[fill][blue] (-2,2) circle [radius=0.06];
\draw[fill][blue] (-2,3) circle [radius=0.06];
\draw[fill][blue] (-2,4) circle [radius=0.06];
\draw[fill][blue] (-2,5) circle [radius=0.06];
\draw[fill][blue] (-3,0) circle [radius=0.06];
\draw[fill][blue] (-3,1) circle [radius=0.06];
\draw[fill][blue] (-3,2) circle [radius=0.06];
\draw[fill][blue] (-3,3) circle [radius=0.06];
\draw[fill][blue] (-3,4) circle [radius=0.06];
\draw[fill][blue] (-3,5) circle [radius=0.06];
\draw[fill][blue] (-4,0) circle [radius=0.06];
\draw[fill][blue] (-4,1) circle [radius=0.06];
\draw[fill][blue] (-4,2) circle [radius=0.06];
\draw[fill][blue] (-4,3) circle [radius=0.06];
\draw[fill][blue] (-4,4) circle [radius=0.06];
\draw[fill][blue] (-4,5) circle [radius=0.06];
\end{tikzpicture}
\end{figure}
	
4. $M$ belongs to 
the extension-closure 
 $E$ of $(\cup_{a> 0} \dmger_{\wchow=a}(a))\cup \dmge_{\wchow\ge 0}$ (in $\obj \dmger$).  

\end{coro}
\begin{proof}
The first condition is equivalent to the second one by Proposition \ref{pvan}(3). 
(Each of) these two conditions also imply the third condition (i.e., all of the equivalent conditions from Proposition \ref{phcwh})  
 by   Proposition \ref{pcwh}(6). Next, our condition 2 is the corresponding case of condition \ref{ir2}
of Proposition \ref{phcwh}. Hence it yields our condition 4 by Theorem \ref{tstairs}(3) 
 (see Condition D in that theorem; note that  $a_{\ii,i}$ for $i\in \z$ 
equals $\max(i,0)$ in this case). 

Finally, our assumption 4 implies assumption 1 since for any $a\ge 0$ the classes $\dmger_{\wchow=a}(a)$ and $\dmger_{\wchow=a}$ 
lie in $\dmer^{\thomr\le 0}$ 
 (see the end of \S\ref{smotnot}). \end{proof}

\begin{rema}\label{rmothomol}


\begin{enumerate}
\item\label{irmt1}
 Now consider the (Chow-) weight spectral sequence $T(M,K)$ converging to the (zero-dimensional) motivic homology of $M$ over $K$: $$E_1^{pq}(T(M,K))=\chowm_{0}(M^p_K,R,-q)\implies \chowm_{0}(M_K,R,-p-q)$$ (where $\wcr(M)=(M^p)$). Clearly,   $E_2^{pq}(T(K))=\chw^p_{0}(M_K,R,-q)$. Hence (for any   staircase set $\ii$) the equivalent conditions of Theorem \ref{tstairs}(3) can be reformulated in terms of the vanishing of the corresponding $E_2$-terms of $T(M,K)$ (for $K$ running through function fields over $k$). 
 In particular (by Corollary  \ref{cmothomol}) the 
higher motivic homology groups of $M$ (over any extension of $k$) vanish if and only if all the corresponding $E_2^{*,*}(T(M,K))$ 
do. This is quite non-trivial since 
the spectral sequence $T(M,K)$ usually does not 
 degenerate at $E_2$! 

Hence one may say that the usual motivic homology groups are somewhat "crude mixes" of the Chow-weight ones (via Chow-weight spectral sequences). Indeed, in contrast to the latter groups 
 the motivic homology ones do not "detect" the $c$-effectivity 
  of motives (i.e., their vanishing in higher degrees does not yield any information of this sort).

\item\label{irmtc} 
On the other hand, motivic homology groups may be somewhat easier to compute (for certain motives) than the Chow-weight ones. Note here
 that the only method of computing Chow-weight homology of a motif $M$ that is known to the authors is to choose $t(M)$ so that the corresponding Chow groups of $M^i$ are known (however, since $t(M)$ can be replaced by a homotopy equivalent complex, this method is rather flexible; cf. Corollary \ref{ccones}).

\item\label{irmt2}
 The spectral sequences $T(M,K)$ (see part \ref{irmt1} of this remark) yield an alternative way of  proving that condition 3 of our corollary implies condition 1.

\item\label{irmt3}
For an (effective) Chow 
 motif $N$ and $c\ge 0$ 
our corollary easily yields the following equivalence: $N\in \dmer^{\thomr\le -c}$ if and only if $N$ is $c$-effective. For $R=\q$ one can also prove this statement by combining Proposition \ref{predhchow}  with Lemma 3.7 of \cite{vialmotab}.

\item\label{irmt4}
 Certainly, we could have (slightly) improved  condition 4 of our corollary by replacing the usage of Condition D in Theorem \ref{tstairs}(3) by part 4 of this statement in the  proof. 
\end{enumerate}
\end{rema}

 \subsection{
Relation of 
effectivity 
 conditions to cohomology}\label{sconj} 

Now we relate our effectivity conditions on motives to the properties of Chow-weight filtrations  and spectral sequences $T_{\wchow}(H,M)$.

\begin{pr}\label{phomol}
Let $H$ be a cohomological functor from $\dmger$ into an abelian category $ \au$. Assume that a motif $M\in \obj \dmger$ 
 satisfies the equivalent conditions of Theorem \ref{tstairs}(3) (for some staircase set $\ii$). 

Then for any $s,q\in \z$ both $E_2^{-sq}T_{\wchow}(H,M)$ and the quotient object \linebreak $(\grwc^{-s}H^{q-s})(M)=(W^{-s}H^{q-s})(M)/(W^{1-s}H_{s+q})(M)$ are certain subquotients of $H^q(\mgr(P)\lan a_{\ii,s}\ra)$ for some $P\in \spv$ whenever $a_{\ii,s}<+\infty$; these two objects vanish if $a_{\ii,s}=+\infty$.
Moreover, if $M$ is of dimension at most $r\in \z$ (see Definition \ref{deffdim}(2)) then we can assume here that $\dim P\le r- a_{\ii,s}$.

In particular, if $M\in \dmger_{\ge 0}^{\lan c \ra}$ (see Definition \ref{defflec}) then for any $q\in \z$ and $s>0$ we have $E_2^{-sq}T_{\wchow}(H,M)=\ns$, $H^q(M)=(W^0H^q)(M)$, and for any $q\in \z$ there exists $P\in \spv$ such that  $E_2^{0q}T_{\wchow}(H,M)\cong H^q(M)/(W^1H^q)(M)$ is a subobject of 
$H^q(\mgr(P)\lan c\ra)$. 
\end{pr}
\begin{proof}
According to Theorem \ref{tstairs}(3), 
we may assume that the $s$th term $M^s$ of $t(M)$ belongs to $\obj\chower \lan a_{\ii,s}\ra$ for the first part of the statement and to $\obj (d_{\le  r- a_{\ii,s}}\chower) \lan a_{\ii,s}\ra$ for its "moreover" part  (recall that this means  $M^s=0$ if $a_{\ii,s}=+\infty$). Hence these two parts of the statement follow immediately from Proposition \ref{pwss}(2) (since effective Chow motives are retracts of motives of smooth projective varieties, and we can certainly bound the dimensions of the latter). 

It remains to treat the case $M\in \dmger_{\ge 0}^{\lan c \ra}$. Since $\dmger_{\ge 0}^{\lan c \ra}\subset \dmger_{\wchow\ge 0}$, we can assume that $M^n=0$ for $n>0$; hence $E_2^{-sq}T_{\wchow}(H,M)=\ns$ for $s>0$, $H^q(M)=(W^0H^q)(M)$, $E_2^{0q}T_{\wchow}(H,M)\cong H^q(M)/(W^1H^q)(M)$, and this object is a subobject of $H^q(M^0)$ for any choice of $t(M)=(M^r)$. 
Hence the same argument as above gives the existence of a variety $P$ in question. 
\end{proof}

\begin{rema}\label{rdetect}
\begin{enumerate}


\item\label{icoh}
Clearly, here one may consider homology instead of cohomology; see Proposition \ref{pwss}(1). 
One can also replace homology by cohomology in Proposition \ref{pconj} below. We chose to concentrate on cohomology here due to the occurrence of cohomology with compact support in \S\ref{samgc}.


\item\label{icrit}
Thus the study of the weight filtration on $H^*(M)$ for an arbitrary $H$ can yield the non-vanishing of certain Chow-weight and motivic homology groups (see Corollary \ref{cmothomol}  for the latter); cf. Proposition \ref{pconj} below.   This is quite remarkable since the corresponding cycle class maps 
 (cf. \S\ref{small} below) are very far from being surjective (in most cases).

\item\label{ideg} Clearly, for any $H$ and $M$ the object $(\grwc^{-s}H^{q-s})(M)$ 
 is a subquotient of $E_2^{-sq}T_{\wchow}(H,M)$, and we have an isomorphism if the spectral sequence $T$ degenerates at $E_2$.

Now, the latter condition is fulfilled  if $H(M)$ is the $\q$-linear singular cohomology of $M_{\com}$ (we fix an embedding of $k$ into $\com$)  or \'etale $\ql$-cohomology of $M_{k^{alg}}$ for $l\neq p$ and $k$ that is an essentially finitely generated field (see Definition \ref{dhcho}(\ref{idh3}) and  Remark 2.4.3 of \cite{bws}). 
Moreover, in  this case these $E_2$-terms can be functorially expressed in terms of Deligne's weights on $H^*(M)$ (if we consider $H^*$ as  functors into the category of mixed Hodge structures or if $k$ is a finitely generated field and we endow $H^*(M)$ with the action of the Galois group of $k^{alg}/k$); we will use the notation $\wed_{*}H^*(M)$ for the latter filtration. Since the object $H^{q}(\mgr(P))$ 
is (pure) of Deligne weight $q$ for these two homology theories and any $P\in \spv$, 
we obtain that  $E_i^{-sq}T_{\wchow}(H,M)$ is of Deligne's weight $q$ also for any $i>0$. Thus the subobject $(W^lH^m)(M)\subset H^m(M)$ (for any $l,m\in \z$) equals $\wed_{m-l}H^m(M)$, and we also have an equality $(\grwc^lH^m)(M)=\grwd_{m-l}H^m(M)$ of the graded factors of these two filtrations.

Moreover, one easily defines a reasonable notion of $c$-effectivity for these two "types" of $H^*$ for any non-negative integer $c$ 
that would be suitable for our purposes. Consequently, we will say that a pure (resp. mixed) Hodge structure  (we will consider $\q$-linear Hodge structures only in this paper)  is $c$-effective and write $V\in \obj\phse^c$ (resp. $V\in \obj\mhse^c$) whenever  the Hodge numbers $V^{sq}$ vanish unless $s\ge c$  and $q\ge c$; this is obviously equivalent to $F^cV_{\com}=V_{\com}$.

We will not give the general definition of effectivity of  pure or mixed $\ql$-Galois representations; we will only recall that it is defined in terms of eigenvalues of the action of geometric Frobenius elements (cf. Proposition \ref{pesn}(1) below and its proof). 


\end{enumerate}

\end{rema}

Now we will study the question whether the 
$c$-effectivity restriction on $H^*(M)$ for $H$ as in Remark \ref{rdetect}(\ref{ideg}) is equivalent to the conditions of Theorem \ref{tstairs}(3). For $R=\q$ and under certain (rather heavy) restrictions on $M$ one can obtain a statement of this sort for $H$ being \'etale cohomology; see the proof of \cite[Proposition 4.2.3(4)]{bkl} for a closely related argument.  Here we will describe another statement in this direction.

\begin{pr}\label{pconj}

Assume $k\subset \com$ and denote by $\hsing $ the singular cohomology functor from $\dmgeq $ into $\mhse^0$ (see Remark \ref{rdetect}(\ref{ideg})). 

Assume that the following conjectures hold.

A. The Hodge conjecture.

B. Any morphism of  Chow motives (over $\com$) that induces an isomorphism on their singular cohomology is an isomorphism.

Assume also that for some staircase set  $\ii$ (see Definition \ref{dreasi}) and an object $M$ of $\dmgeq$ 
 the pure Hodge structure 
 $\grwd_{q}H^{q-i}(M) $ belongs to $\phse^{j+1}$ 
 for all  $(i,j)\in \ii$ and $q\in \z$. Then the motif $M$ satisfies the (equivalent) conditions of Theorem \ref{tstairs}(3) (cf. Proposition \ref{phomol}). 

\end{pr}
\begin{proof}
By the virtue of Theorem \ref{tstairs}(3), it suffices to verify that  $M$ belongs to the extension-closure of 
$\cup_{i\in \z }(\obj \chower[-i]\lan a_{\ii,i}\ra)$.
So we fix certain $(i,j)\in \ii$ and argue similarly to the proof of  \cite[Proposition 7.4.2]{mymot}. We choose the smallest $a\in \z$ such that  $l^j(M)\in \dm_{gm}^{\q,j}{}_{\wcho^{j}\ge -a}$. We should check that $a<i$.

Assume that the converse holds (i.e. $a\ge i$).
Applying Proposition \ref{ploc}(2) we obtain that $M$ is an extension of an element of $\dmgeq_{\wchow\ge -a}$ by that of $\dmgeq_{\wchow\le -a-1}\lan j+1 \ra$. According to Proposition \ref{pbwcomp}(\ref{iwcex}), this gives a choice of a weight complex $t(M)=(M^s)$ of $M$ such that $M^s\in \obj \choweq \lan j \ra$ for $s>a$. Moreover, we can assume that $M^a=\mgq(P)$ for some $P\in \spv$ (since one can add a summand of the form $\dots\to 0\to N\stackrel{id_N}{\to} N\to 0\to \dots$ to $t(M)$, with $N$ placed in degrees $a-1$ and $a$). 

Then our assumptions on the (Deligne) weight filtration on $\hsing^*(M)$ along with its relation to the cohomology of $M^s$ that was described in Remark \ref{rdetect}(\ref{ideg}) imply that the Hodge structure $\ke (\hsing^q(d_M^{a-1}))$ 
belongs to $\obj\phse^{j+1}$ (i.e., is $j+1$-effective) for all $q\ge 0$; here $d_M^{a-1}: M^{a-1}\to M^a$ is the boundary of $t(M)$. Now we need a more or less "standard"  Hodge-theoretic argument to obtain a certain motivic splitting. 

Our assumption A implies that the generalized Hodge conjecture (see Conjecture 7.5. of \cite{petershodge})  is fulfilled for $P$ (such that $M^a=\mgq(P)$); see Corollary 7.9 of \cite{petershodge}. Hence there exists an open subvariety $U$ of $P$ such that the variety $Z=P\setminus U$ is of codimension more than $j$ in $P$, and $\ke (\hsing^q(M^a)\to \hsing^q(M^{a-1}))$ is supported on $Z$ for all $q\ge 0$, i.e., $\ke 
(\hsing^q(d_M^{a-1})) \subset \ke (\hsing^q(P)\to \hsing^q(U))$. Now, the motive $C=\co(\mgq(U)\to \mgq(P))$ belongs to $ \dmgeq_{\wchow\le 0}\lan j+1 \ra$ according to Corollary \ref{cchows}(4). Next, there exists a choice of $C'=\wchow_{\le 0}C$ that belongs to $ \obj \choweq \lan j+1\ra$ (see part 1 of the corollary). Since the morphism $\mgq(P)\to C$ factors through $C'$ (see Proposition \ref{pbw}(\ref{ifactp})), we obtain that $\ke 
 (\hsing^q(d_M^{a-1})) \subset \imm (\hsing^q(h))$ for some morphism $h\in \choweq(M^a,C')$ and all $q\ge 0$.

Next, recall that the category of polarizable  pure Hodge structures is semi-simple (here one can either consider the direct sum of the corresponding categories for all weights $q\ge 0$ or treat the weights separately). 
 Since the Hodge conjecture implies that any morphism between (the "total") $\hsing$-cohomology of Chow motives lifts to a morphism of these motives,  
 we obtain the existence of a morphism $h'\in \choweq (M^a,M^{a-1}\bigoplus C')$ that fulfils the following conditions for all $q\ge 0$: 
 the morphisms $\hsing^q(h')$  are injective, and they induce injections of $\imm (\hsing^q(d_M^{a-1}))$ into $\hsing^q(M^{a-1})$ that split the surjections induced by $\hsing^q(d_M^{a-1})$. 
 Moreover, there also exists $h''\in \choweq (C',M^a)$ such that $\hsing^q(d_M^{a-1}\bigoplus h'')$ splits $\hsing^q(h')$ for all $q\ge 0$. Thus the composition $(d_M^{a-1}\bigoplus h'') \circ h'$ is an automorphism of $M^a$ according to our assumption B. Thus we can calculate a choice of a weight complex $t_{j}$ of $l^{j}(M)$ as follows (according to Proposition \ref{pbwcomp}(\ref{iwcfunct})):   $$t_{j}\cong \dots \to M^{a-1}_{j}\to  M^{a}_{j}\to 0\to \dots\cong (M^{a-1}\bigoplus C')_{j}\stackrel{(d_M^{a-1}\bigoplus h'')_{j}}{\longrightarrow}
M^{a}_{j}\to 0\to \dots,$$ where the lower index $j$ means that we apply the induced functor $\chower\to \hw_{\cho^{j}}$ (recall that $C'\in \dmgeq_{\wchow=0}\lan j+1\ra$). Since the morphism $d_M^{a-1}\bigoplus h''$ splits, the same is true for its image $(d_M^{a-1}\bigoplus h'')_{j}$. Applying 
 Proposition \ref{pbwcomp}(\ref{iwext})  we obtain that $l^j(M)\in \dm_{gm}^{\q,j}{}_{\wcho^{j}\ge 1-a}$,  contrary to our assumption.\end{proof}

\begin{rema}\label{rconj}

1. This proposition 
 suggests that one can  look for motives with "interesting" Chow-weight homology using singular and \'etale (co)homology.

Note also that (for any $q\in \z$) $\grwd_{q}H^{q-i}(M) $ belongs to $\phse^{j+1}$  for all  $(i,j)\in \ii$ if and only if the 
  quotient $H^{q-i}(M)/\wed_{q-1}H^{q-i}(M)$ belongs to $\mhse^{j+1}$; 
	 recall that $\ii$ is a staircase set. 

2. Clearly, our assumption B  is a particular case 
 of the well-known conservativity conjecture (that predicts the following: if   $H^*(M)=0$ for $H^*$ that is either  \'etale or singular cohomology and $M\in \obj \dmgeq$, then $M=0$).

Moreover, assumption B is essentially equivalent to Theorem I of \cite{ayoubcon} (and formally a particular case of loc. cit.), whereas the full conservativity follows from Conjecture II  of loc. cit.\footnote{Currently the proofs of the main results of ibid. contain a gap. Hopefully, it will be closed eventually.} 

 

\end{rema}

We conclude the subsection by deducing a funny property of the homotopy $t$-structure.

\begin{coro}\label{ctens}
Assume that $k$ is of characteristic $0$, assumptions A and B of Proposition \ref{pconj}  are fulfilled, and for $M,N\in \obj \dmgeq$ we have $M\otimes N\in \dmeq^{\thomq\le -1}$. Then either $M$ or $N$ belongs to   $\dmeq^{\thomq\le -1}$ as well.
\end{coro}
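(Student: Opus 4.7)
The plan is to translate the motivic condition $M\otimes N\in \dmeq^{\thomq\le -1}$ into a Hodge-theoretic statement on the singular realization (under assumptions A and B), and then to conclude via the K\"unneth formula together with additivity of Deligne weights under tensor product.

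Fix an embedding $k\hookrightarrow \com$; since for $R=\q$ the condition $\thomq\le -1$ is compatible with field extensions (combine Propositions \ref{pvan}(3) and \ref{ptestf}(II)), we may assume $k\subseteq \com$. Shifting the staircase of Corollary \ref{cmothomol} by one index, we see that $M\in \dmeq^{\thomq\le -1}$ is equivalent to the vanishing $\chw^j_{i,K}(M)=\ns$ for all $(i,j)\in I=\{(i,j):\ i\ge j\ge 0\}$ and all function fields $K/k$, hence to Condition D of Theorem \ref{tstairs}(3) for this staircase. Combining Proposition \ref{phomol} (the unconditional direction) with Proposition \ref{pconj} (where A and B are used), this is further equivalent to the Hodge condition $\mathrm{gr}^W_{-i}H_{i+q}(M)\in \mhse^{i+1}$ for every $i\ge 0$ and $q\in \z$. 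A routine Hodge-number calculation (using the convention of Remark \ref{rdetect}(\ref{ideg})) shows that a pure Hodge structure of standard weight $p$ lying in $\mhse^{p+1}$ must vanish for $p\ge 0$: its Hodge numbers $h^{a,b}$ satisfy $a+b=p$ and $a,b\le -(p+1)$, whence $p\le -2(p+1)$, impossible for $p\ge 0$. Thus, under A and B, $M\in \dmeq^{\thomq\le -1}$ is equivalent to the vanishing of $\mathrm{gr}^W_n H_*(M)$ for every (standard) Deligne weight $n\ge 0$.

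Now the singular realization of $\dmgeq$ into the bounded derived category of rational mixed Hodge structures is symmetric monoidal, so the K\"unneth formula gives an isomorphism
\[
H_m(M\otimes N)\cong \bigoplus_{a+b=m} H_a(M)\otimes H_b(N),
\]
and weights add under tensor product: $\mathrm{gr}^W_w(V\otimes W)\cong \bigoplus_{w_1+w_2=w}\mathrm{gr}^W_{w_1}V\otimes \mathrm{gr}^W_{w_2}W$. Suppose for contradiction that neither $M$ nor $N$ lies in $\dmeq^{\thomq\le -1}$; by the preceding paragraph there exist nonzero pieces $\mathrm{gr}^W_{w_M}H_a(M)$ and $\mathrm{gr}^W_{w_N}H_b(N)$ with $w_M, w_N\ge 0$. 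Their nonzero tensor product is a direct summand of $\mathrm{gr}^W_{w_M+w_N}H_{a+b}(M\otimes N)$, with $w_M+w_N\ge 0$, contradicting the Hodge characterization applied to $M\otimes N$. Hence $M$ or $N$ lies in $\dmeq^{\thomq\le -1}$.

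The main technical subtlety is careful bookkeeping of the Deligne weight conventions (as in Remark \ref{rdetect}(\ref{ideg})) and verifying that the effectivity condition supplied by Proposition \ref{pconj} actually forces outright vanishing (rather than merely effectivity) of the relevant weight-graded pieces; once this first reduction is secured, the rest is a direct K\"unneth argument.
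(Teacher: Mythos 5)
Your overall strategy matches the paper's (reduce to $k\subseteq\com$, apply Proposition \ref{pconj} to translate into a Hodge-theoretic statement, then use that the singular realization is a tensor functor and apply K\"unneth); the paper simply says "the latter is easy" for the Hodge-theoretic step, while you try to spell it out. Unfortunately, your spelled-out characterization is incorrect.

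Concretely, you assert that under A and B the condition $M\in \dmeq^{\thomq\le -1}$ is equivalent to "the vanishing of $\mathrm{gr}^W_n H_*(M)$ for every Deligne weight $n\ge 0$". This is false: take $M=\q[1]$. Then $M\in \dmeq^{\thomq\le -1}$ (it is a shift of the motif of a point), yet $H_{-1}(M)\cong\q$ is pure of Deligne weight $0$, so $\mathrm{gr}^W_0 H_*(M)\neq 0$. The error is in passing from the condition of Proposition \ref{pconj} — which for the staircase $I=\{(i,j):\ i\ge j\ge 0\}$ reads "$\mathrm{gr}^W_{-i}H_{i+q}(M)\in\mhse^{i+1}$ for $i\ge 0$ and all $q$" (with the $W$ being the Chow weight filtration, and $\mathrm{gr}^W_{-i}H_{i+q}$ pure of Deligne weight $q$) — to a condition on Deligne-weight graded pieces alone, forgetting the index $i$ (equivalently, the homological degree). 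The Hodge condition is a bi-indexed condition on the Chow-weight/Deligne-weight bigraded pieces, not a condition on weights alone; your "routine Hodge-number calculation" handles the diagonal case $j=p$ but the actual constraint ranges over the whole staircase.

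Moreover, even after fixing the characterization, the final K\"unneth step needs more care than you indicate. Given non-$\mhse^{i_M+1}$ and non-$\mhse^{i_N+1}$ graded pieces, you pick Hodge components $h^{a_0,b_0}\neq 0$ with $\max(a_0,b_0)\ge -i_M$ and $h^{a_1,b_1}\neq 0$ with $\max(a_1,b_1)\ge -i_N$; but $\max(a_0+a_1,b_0+b_1)$ is not bounded below by $\max(a_0,b_0)+\max(a_1,b_1)$ in general. To conclude that the product lands outside $\mhse^{i_M+i_N+1}$ one needs the Hodge symmetry $h^{a,b}=h^{b,a}$ for the polarizable pure pieces (so one may pick both components with $a_0\ge b_0$ and $a_1\ge b_1$) — a step you do not mention. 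In summary the architecture of your argument is the same as the paper's, but the key reduction as you state it is wrong and the K\"unneth step is incomplete.
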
 
\begin{proof}
We can  assume that $k\subset \com$ (since $A$ and $B$ are defined over some countable subfield $k'$ of $k$, and the base field change functor yields a conservative $t$-exact functor between the corresponding motivic categories). 
Recall now that 
("total") singular homology is a tensor exact functor.
Thus by the virtue of Proposition \ref{pconj} it suffices to verify the natural analogue of this statement for the derived category of (mixed) Hodge structures; the latter is easy. 
\end{proof}

\begin{rema}\label{rtens}
\begin{enumerate}
\item\label{immot}
In this argument one can certainly replace 
  singular homology by any other homology theory satisfying similar properties. A natural candidate here is the so-called mixed motivic homology corresponding to the conjectural motivic $t$-structure on $\dmgeq\subset \dmgmq$. One can easily see that the "standard" expectations on this functor (see  \S5.10A in \cite{beilh}, \cite[Definition 3.1.1(4) and Proposition 4.1.1]{bmm}, and \cite{ha3}) 
imply that the conclusion of our proposition follows from them (for  a perfect field $k$ of arbitrary characteristic).

\item\label{ifail} Clearly, no analogue of this proposition holds for motives with 
 integral coefficients.  This probably implies that there cannot exist an easy unconditional proof of our corollary.


\end{enumerate}

\end{rema}

\subsection{Comparing integral and rational coefficients: bounding torsion of homology}\label{sboutors}

Let $r$  denote a fixed non-zero integer; we will assume it to be divisible by $p$ if $p>0$. We deduce some consequences from our results by comparing $\zop$-motives with $\q$-linear motives and with  $\z[1/r]$-linear ones. 

\begin{defi}\label{dtors}
We will say that $M\in \obj \dmgep$ is torsion (resp. $r$-torsion) 
if there exists  $E_M>0$ (resp.  $d>0$) such that the morphism $E_M\id_M$ is zero (resp. $r^d\id_M=0$).
\end{defi}

Theorem \ref{tmain} 
 easily yields the following statement.

\begin{pr}\label{plocoeff}
Set  $R'=\q$ (resp. $=\z[1/r]$). Then the following statements are valid.

I.1.   $\dmgerp$ is isomorphic to the Karoubi envelope of  the localization of $\dmgep$ by its subcategory of torsion (resp., $r$-torsion) objects. 
We will write  $-\otimes R'$ for the connecting functor $\dmgep\to \dmgerp$; we have (see Proposition 1.3.3 of \cite{bokum}).

2.  $-\otimes R'$ is weight-exact with respect to the Chow weight structures for $\dmgep$ and $\dmgerp$ (respectively).

II.1. There exist  natural isomorphisms $\chw^{i}_{j}(-_K\otimes R',R')\cong \chw^{i}_{j}(-_K,\zop) \otimes _{\zop} R'$ (for all field extensions $K/k$, $i\in \z$ and $j\ge 0$).

 2. Let $M \in \obj \dmgep$, $(n,c) \in \z \times  [0,+\infty)$. 
Then 
the groups $\chw_{j}^{i}(M_K)$ are torsion (resp. $r$-torsion) for all $i\ge n$, $0\le j< c$, and all function fields 
$K/k$, if and only if  $l^{c-1}_{R'}(M\otimes R')\in  \dmgrp{}^{c-1}_{\wcho^{c-1} \ge -n+1}$.\footnote{Recall that $l^{c-1}$ for $c\in [0,+\infty]$ 
 denotes the localization functor $\dmger\to \dmger/\dmger\lan c \ra$ for the corresponding $R$; consequently, it is the identity of  $\dmger$ if $c=+\infty$.}\
\end{pr}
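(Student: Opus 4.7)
The plan is to first establish the categorical Part I, use it to derive the Chow-weight homology comparison in II.1, and then combine this with Theorem \ref{tmain}(2) to obtain II.2. For Part I.1, I would invoke the construction of $\dmgerp$ via $R'$-linear correspondences recalled in \S\ref{smotnot}. Since $R'$ is flat over $\zop$, the relation $\smc_{R'}(X,Y) = \smc_{\zop}(X,Y)\otimes_{\zop} R'$ propagates through the motivic construction (see \cite[Proposition 5.5.11]{kellyth}) to yield a natural isomorphism
\[
\dmgerp(N\otimes R', M\otimes R') \cong \dmgep(N,M)\otimes_{\zop} R'
\]
for $N,M \in \obj\dmgep$, while $\mgp(X)\otimes R' = \mgrp(X)$ by construction. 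This Hom identity gives both that torsion (resp.\ $r$-torsion) objects are killed by $-\otimes R'$ (take $N=M$, so $\id_M$ being torsion forces $\id_{M\otimes R'} = 0$) and that the induced functor from the Verdier quotient of $\dmgep$ by the subcategory of torsion (resp.\ $r$-torsion) objects is fully faithful; essential surjectivity after Karoubification then follows since $\dmgerp$ is strongly generated by motives of smooth varieties, which all lie in the image. Part I.2 is then immediate from Proposition \ref{pbw}(\ref{iwe}): the functor $-\otimes R'$ sends $\chowep$ into the heart of the Chow weight structure on $\dmgerp$ by construction, hence is weight-exact.

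For Part II.1, I would take a Chow-weight complex $(M^*)$ of $M$ with $M^i \in \chowep$. By the weight-exactness from Part I.2 and Proposition \ref{pbwcomp}(\ref{iwcfunct}), $(M^i\otimes R')$ is a Chow-weight complex for $M\otimes R'$. The key identification
\[
\chow_j^{R',K}(N\otimes R') \cong \chow_j^{\zop,K}(N)\otimes_{\zop} R'
\]
for $N\in \chowep$ and a field $K/k$ follows because higher Chow groups can be computed by cycle complexes (see Section 4.2 of \cite{1} and \cite[Proposition 5.5.11]{kellyth}) and the flat base change $-\otimes_{\zop} R'$ commutes with the formation of these cycle complexes termwise. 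Since $R'$ is flat over $\zop$, tensoring with $R'$ commutes with the differentials and with passage to homology, yielding the desired natural isomorphism $\chw^{i,R'}_{j,K}(M\otimes R')\cong \chw^i_{j,K}(M)\otimes_{\zop} R'$.

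Finally, Part II.2 combines Part II.1 with the elementary fact that for an abelian group $A$, $A\otimes_{\zop}\q = 0$ iff $A$ is torsion, and $A\otimes_{\zop}\z[1/r] = 0$ iff $A$ is $r$-torsion. The torsion (resp.\ $r$-torsion) hypothesis on $\chw^j_{i,K}(M)$ in the stated range thus translates directly into the vanishing of $\chw^{j,R'}_{i,K}(M\otimes R')$ in the same range, and an application of Theorem \ref{tmain}(2) in the $R'$-linear setting to $M\otimes R' \in \obj\dmgerp$ (with the parameter adjusted so that the weight bound matches $\wchow\ge -n+1$) gives the equivalence with $l^{c-1}_{R'}(M\otimes R')\in \dmgrp^{c-1}_{\wchow\ge -n+1}$. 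The main obstacle is Part I.1: pinning down the kernel of $-\otimes R'$ precisely as the torsion (resp.\ $r$-torsion) objects requires the Hom-compatibility coming from the cycle-level construction, whereas the remaining parts follow rather routinely once this is in place.
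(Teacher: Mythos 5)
Your proposal is correct and follows essentially the same route as the paper's (very terse) proof, just with substantially more detail filled in. The paper cites Kelly's thesis for I.1 and mentions the alternative description via $K^b(\smc_{\zop})$; you develop the latter into a Hom-compatibility statement $\dmgerp(N\otimes R', M\otimes R')\cong\dmgep(N,M)\otimes_{\zop}R'$, which is indeed the mechanism behind both I.1 and II.1 (the paper's "immediate from I.2 by the definition of Chow-weight homology" tacitly invokes this same compatibility of morphism groups over extensions $K/k$, not just weight-exactness). Two small remarks: your Hom formula needs $R'$ to be a localization of $\zop$ (which $\q$ and $\z[1/r]$ are), not merely flat — this is what makes $-\otimes_{\zop}R'$ commute with the Verdier localization and with filtered colimits in the cycle-complex computation. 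And for II.2, your remark that "the parameter must be adjusted" is well taken: applying Theorem \ref{tmain}(2) with parameter $n-1$ gives the weight bound $\wchow\ge -n+1$, matching the stated conclusion (which requires reading the index range in the proposition as $i\ge n$); this is a bookkeeping point the paper itself leaves implicit.
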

\begin{proof}
I.1. This result was proved in \cite{kellyth} (see \S A.2 of ibid.; cf. also the proof of Proposition 5.3.3 of \cite{kellyast}). 

2. The statement is immediate from the 
previous assertion by Proposition \ref{pbw}(\ref{iwe}).

II.1. The statement follows immediately from 
assertion I.2 (by the definition of Chow-weight homology).

2. The statement is  immediate from Theorem \ref{tmain}(2--3) (see also Theorem \ref{tstairs}(3)) applied to $M\otimes R'$ (using the previous assertion). \end{proof}

\begin{rema}\label{rtors}
The weight-exactness of  $-\otimes R'$  yields that the Chow weight structure on  $\dmgerp$ is "determined" by the one for $\dmgep$. 
Thus it may be treated
using the localization methods developed in \cite{bos} and \cite{bsnew}.
\end{rema}

Now we proceed to prove a drastic improvement of Proposition \ref{plocoeff}(II.2); the following technical definitions will be helpful.

\begin{defi}\label{dground}
Let $\ii$ be a staircase set (see Definition \ref{dreasi}).

We will call it {\it grounded} if there exists an integer $n$ such that $(n,0) \notin \ii$.

We will say that $\ii$ is {\it bounded  above} if there exists   $n\ge 0$ such that $(k,n) \notin \ii$ for all $k \in \z$.
\end{defi}


Once again, one may consult  \S\ref{smotnot}, Proposition \ref{pwchow}(\ref{ip1}), and Definition \ref{dcwh} (along with Definition \ref{deffdim}(6)) for other notation used in the following formulation.

\begin{theo}\label{ttors}
Let $M \in \obj \dmgep$, $\ii \subset \z \times  [0,+\infty)$.

I. The following conditions are equivalent.

a. The group $\chw_{j}^{i}(M_K)$ is torsion for any function field $K/k$ and $(i,j)\in \ii$.

b. $\chw_{j}^{i}(M_K)$ is torsion for any  $(i,j)\in \ii$ and  a single universal domain  $K$ 
 containing $k$.

II. Assume in addition that $\ii$ is  a staircase set  (in the sense of Definition \ref{dreasi}) and $r$ is a non-zero integer (that we assume to be divisible by $p$ if $p>0$).

Then the following conditions are equivalent.

A. The groups $\chw_{j}^{i}(M_K) $ are torsion (resp. $r$-torsion) 
for all function fields $K/k$ and $(i,j) \in \ii$. 

B. $E_M \cdot \chw_{j}^{i}(M_K) = \ns$, where $E_M$ is a fixed non-zero integer (resp.  a fixed power of $r$) 
for all  field extensions $K/k$ and $(i,j) \in \ii$. 

C. For any $i\in \z$ there exists a distinguished triangle 
 $T_i \to M \to N_i\to T_i[1]$ satisfying the following conditions: $N_i$ is an extension of an element of $\dmgep_{\wchow \ge -i+1}$ by an element of $(\dmgep_{\wchow \le -i})\lan a_{\ii,i} \ra$ 
 and $T_i$ is a torsion motif (resp. is an $r$-torsion motif).\footnote{Recall that  $\dmgep\lan +\infty \ra=\ns$ in our convention.}\ 

D. For any integers $n,n'$ there exists a distinguished triangle $T\to M \to N \to T[1]$ satisfying the following conditions: $T$ is a torsion motif  (resp. an $r$-torsion motif), and there exists a triangle $Q\to N \to N' \to Q[1]$ such that $Q \in \dmgep_{\wchow\ge -n'+1}$ and such that for some choice of 
 $\wchow_{\ge -n}N'$ (see Remark \ref{rstws}(2)) 
   we have  $CWH_{j}^{i}(\wchow_{\ge -n}N'_K) = \ns$ for all  
	 field extensions $K/k$  and $(i,j) \in \ii$.

E. For any integers $n,n'$ there exists a distinguished triangle $T\to M \to N \to T[1]$ along with a choice $t(N)=(N^i)$ of a weight complex of $N$ such that 
$N^i$ is $(j+1)$-effective whenever $(i,j) \in \ii\cap ([n',n]\times [0,+\infty))$ and $T$ is a torsion motif  
 (resp. an $r$-torsion motif).

E'. For any integers $n, n'$ 
there exists a distinguished triangle $T\to M \to N \to T[1]$ satisfying the following conditions: $T$ is a torsion motif  
  (resp. an $r$-torsion  motif) and  $CWH_{j}^{i}(N_K) = \ns$ if  $(i,j) \in \ii\cap ([n',n]\times [0,+\infty))$. 

III. Assume moreover that $\ii$ is grounded. Then the conditions in part II are also equivalent to the following one:

F.  For any  integer $n$ there exists a distinguished triangle $T\to M \to N \to T[1]$ satisfying the following conditions: $T$ is a torsion motif (resp. an $r$-torsion motif),  and for some 
 choice of  $\wchow_{\ge -n}N$ 
  we have  $CWH_{j}^{i}(\wchow_{\ge -n}N_K) = \ns$ for all $(i,j) \in \ii$.

IV. Assume  that $\ii$ is a bounded  above staircase set. Then the (equivalent) conditions in part II are  equivalent to the following assertion:

G. For any integer $n'$ there exists a distinguished triangle $T\to M \to N \to T[1]$ satisfying the following asssumptions: $T$ is a torsion motif  (resp. an $r$-torsion motif), and there exists a triangle $N'\to N \to Q \to N'[1]$ such that $Q \in \dmgep_{\wchow\ge -n'+1}$ and  $CWH_{j}^{i}(N'_K) = \ns$ for all $(i,j) \in \ii$. 

V. Assume   that $\ii$ is both grounded and bounded  above. Then the conditions in part II are  equivalent to the following one:

H.  There exists a distinguished triangle $T\to M \to N \to T[1]$ satisfying the following conditions: $T$ is a torsion motif  
  (resp. an $r$-torsion motif) and  $CWH_{j}^{i}(N_K) = \ns$ for all $(i,j) \in \ii$.

\end{theo}

\begin{proof}
I. 
 The statement is immediate from 
Proposition \ref{ptestf}(II) applied to $M\otimes \q$.

II. Clearly, Condition B implies Condition A. 

Now assume D. 
We apply Proposition 4.2.1(2) of \cite{bsosnl} for the following data: $\cu = \dmgep$, $K$ is the subcategory of torsion (resp. $r$-torsion) objects (it corresponds to $J=\z\setminus \ns$ or to $J=\{r\}$ in the notation of loc. cit., respectively), $\du_i=\dmgep\langle i\rangle$, and $a_i = a_{\ii,i}$. Combining this proposition with Theorem \ref{tstairs}(3) 
we obtain that for any integers $n$ and $n'$ there exists a distinguished triangle 
$T \to M \to N \to T[1]$ such that $T$ is a torsion motif (resp. $r$-torsion motif) and $N$ is an extension of an object of $\dmgep_{\wchow_{\ge n+1}}$, an 
object of $\dmgep_{\wchow_{\le n'-1}}$, and an element $N'$ such that 
$l^{a_{\ii,i}}(N') \in \dmgep_{\wcho^{a_{\ii,i}}{\ge -i+1}}$. By  the definition of $a_{\ii,i}$, 
$l^{j}(N') \in \dmgep_{\wcho^j{\ge 1-i}}$ for any $(i,j) \in \ii$. 
Clearly, a weight complex of any element of $\dmgep_{\wchow_{\ge n+1}}$ and 
$\dmgep_{\wchow_{\le n'-1}}$ can be chosen so that all of its terms in the 
range $[n,n']$ are trivial (see Proposition \ref{pbwcomp}(\ref{iwc3})). Hence for any choice of a weight complex of $N'$ 
we can choose a weight complex of $N$ whose terms are the same as those 
of  $N'$ in the range $[n,n']$ (see part \ref{iwcex} of that proposition). 
By  Theorem \ref{tstairs}(3) there is a choice of weight complex for $N'$ such that its $i$-th term is $j+1$-effective whenever $(i,j) \in \ii$. Thus we obtain $E$.

Proposition \ref{pcwh}(2) easily yields that E implies E'. 

Next, if $T$ is a torsion (resp. an $r$-torsion) motif  then  there exists a non-zero integer (resp. a power of $r$) $n_{T}$  such that $n_{T}\cdot id_{T} = 0$. Hence
 all the Chow-weight homology groups of $T$ are killed by (the multiplication by) $n_{T}$. 
Now assume that  $M$ belongs to $\dmgep_{[-n+1,-n'-1]}$ and E' is fulfilled. Then the long exact sequences for $CWH_{j}^{i}(-K)$ coming from the distinguished triangle $T \to M \to N \to T[1]$ (where  $CWH_{j}^{i}(N_K) = \ns$ for all $(i,j) \in \ii \cap [n', n] \times [0,+\infty)$ and $T$ is torsion) yield that 
$CWH_{j}^{i}(M_K)$ is killed by the multiplication by $n_T$ whenever $i\le n$ and $(i,j) \in \ii$.
 Moreover, $CWH_{j}^{i}(M_K)=\ns$ if $i\ge n+1$; hence it is also killed by the multiplication by $n_T$. 
Thus Condition E' implies B.

Theorem \ref{tstairs}(3) (applied to the corresponding $N$) yields that Condition C implies A. 

It remains to prove that Condition A implies Conditions C and D. 
Assume Condition A. 
According to Proposition \ref{plocoeff} (combined with Theorem \ref{tstairs}(3)), for any $i\in \z$ we have $l^{a_{\ii,i}}_{R'}(M\otimes R')\in \dmgrp^{a_{\ii,i}}_{\wcho^{a_{\ii,i}}\le -i}$ (for $R'=\q$ or $R'=\z[1/r]$, respectively). Hence applying Proposition 4.2.1(1) of \cite{bsosnl} (see also Corollary 4.2.3 of ibid.) 
 to the same setting as above 
  we obtain that Condition C is fulfilled. 
Note also that 
Proposition 4.2.1(2) of ibid. yields that there exists 
a distinguished triangle $T\to M \to N \to T[1]$ satisfying the following conditions: $T$ is a torsion  motif (resp. an $r$-torsion motif),  and there exists a triangle $Q\to N \to N' \to Q[1]$ such that $Q \in \dmgep_{\wchow\ge -n'+1}$ and $N'$ is an extension of an element $N'' \in \dmgep_{\wchow_{\ge -n}}$ such that $l^{a_{\ii,i}}(N'') \in \dmgep_{\wcho^{a_{\ii,i}}{}{\ge -i+1}}$
%
%
for any $(i,j) \in \ii$ by an element of $\dmgep_{\wchow_{\le -n+1}}$. Since $N'$ is an extension of $N''$ by 
 an element $\dmgep_{\wchow_{\le -n+1}}$, $N''$ is a choice of $\wchow_{\ge -n}N'$. By Theorem \ref{tstairs}, $CWH_{i,K}^{j}(\wchow_{\ge -n}N') = CWH_{i,K}^{j}(N'') = \ns$ for all  
 field extensions $K/k$  and $(i,j) \in \ii$. Thus we obtain condition D. 

III, IV, V.
The equivalence of D to  Conditions F,G, and H 
 also follows from Proposition 4.2.1  and Corollary 4.2.3 of \cite{bsosnl}. 
\end{proof}

Now we combine this theorem with the results of \S\ref{shchw}.

\begin{coro}\label{chtors}
Let $M\in \obj \dmgep$,  $K$ 
 be a universal domain containing $k$.

I. The "main" versions of the (equivalent) Conditions A--E' of Theorem \ref{ttors}(II) (i.e., we ignore the versions in brackets that  mention $r$) are also equivalent to each of the following assertions (in the notation of the aforementioned Theorem; consequently, $\ii$ is  
 a staircase set). 


\begin{enumerate}
\item\label{iirrat} For all rational extensions $k'/k$ and $(i,j)\in \ii$  the group $\chw_{j-1}^{i}(M_{k'},\zop,1) $ is torsion.


 \item\label{iir5}  The group $\chw_{j}^{i}(M_{K},\zop)  $ is torsion for  all $(i,j)\in \ii$.

\item\label{iir4n} There exists an integer $E_M>0$ such that  
$E_M\chw_{j-a}^{i}(M_{k'},a) =\ns$ 
for all $(i,j)\in \ii$, $a\in \z$, and all  field extensions $k'/k$.  
\end{enumerate}

II. 
The following conditions are equivalent.
\begin{enumerate}
\item\label{imc1} 
$M\otimes \q\in \dmeq^{\thomq\le 0}$. 

\item\label{imcext}
For any integer $n$ there exists a distinguished triangle $T\to M \to N \to T[1]$ such that 
$\wchow_{\ge -n}N\in \obj \dmgep\cap \dmep{}^{\thomp\le 0}$ 
for some 
 choice of $\wchow_{\ge -n} N$  
  and   a torsion motif $T$.

\item\label{imc2} $\chowm_{0}(M_{K},\q,l)=\ns$ for all $l<0$. 

\item\label{imctor}   
 $\chw_{j-a}^{i}(M_{k'},\q,a)=\ns $ 
for all  $a\in \z$, $i>j$, and all  field extensions $k'/k$. 

\item\label{imc3} There exists an integer $E_M>0$ such that  
$E_M \chw_{j-a}^{i}(M_{k'},a) =\ns$ 
for all  $a\in \z$, $i>j$, and all  field extensions $k'/k$. 

\item\label{imc4} There exists  $E_M>0$ such that $E_M\chowm_{0}(M_{k'},\zop,l)=\ns$ for all $l<0$ and all field extensions $k'/k$.

\item\label{imc5} 
 $\chowm_{0}(M_{k'},\q,l)=\ns$ for all $l<0$. 
\end{enumerate}

III. Assume that $M\in \dmgep_{\wchow\ge 0}$. Then for any $c\ge 0$ the following conditions are equivalent.

\begin{enumerate}
\item\label{imce1} 
$M\otimes \q\in \dmgeq_{\ge 0}^{\lan c \ra}$ (see Definition \ref{defflec}). 

\item\label{imce2} 
 $\chowm_{j}(M_{K},\q)=\ns$ whenever $0\le j<c$. 

\item\label{imce4} There exists  $E_M>0$ such that $E_M\chowm_{j}(M_{k'},\zop)=\ns$ for all $0\le j<c$  and all field extensions $k'/k$.
\end{enumerate}
\end{coro}
\begin{proof}
I. Applying Proposition \ref{phcwh} to $M\otimes \q$ we obtain that 
 our conditions I.\ref{iirrat}--\ref{iir5} are equivalent to  Condition A of Theorem \ref{ttors}(II). 
It remains to note that Condition D of the theorem easily yields our condition I.\ref{iir4n} (since the proof of the implication D$\implies$ B in the theorem carries over to higher Chow-weight homology without any difficulty).

II. First we apply Corollary \ref{cmothomol} for $R=\q$ (and with $M$ replaced by $M\otimes \q$).  
We immediately obtain that our conditions II.\ref{imc1},  II.\ref{imc2}, II.\ref{imctor},  and II.\ref{imc5} 
are equivalent. Clearly, the last of these condition is weaker than condition II.\ref{imc4}. 
 
 Next, 
condition II.\ref{imctor} implies condition II.\ref{imc3} according to our assertion I
 (we take $\ii=\{(i,j):\ i>j\}$ in it).  Moreover, $\ii$ is grounded and 
 Theorem \ref{ttors}(II)  yields the following for any $M$ that fulfils one of these six conditions: 
there exists a distinguished triangle $T\to M \to N \to T[1]$ such that $T$ is a torsion 
 motif and $N$ belongs to the class $E$ mentioned in condition 4 of  Corollary \ref{cmothomol} (for $R=\zop$). Hence $\wchow_{\ge -n}N \in \obj \dmgep\cap \dmep{}^{\thomp\le 0}$; consequently, we obtain that 
our 
condition II.\ref{imcext} is the weakest one among  the seven conditions of this assertion. 

Thus it remains to verify that the last of these conditions implies condition II.\ref{imc4}. 
We note that $\chw_{j-a}^{i}(N_{k'},\q,a) =\ns$
(see Corollary \ref{cmothomol}) and that the constant that kills $T$ clearly kills all 
Chow-weight homology groups of $T$. Once again, it remains to apply the long exact sequences that relate the Chow-weight homology of $M$ with that of $N$, $w_{\ge -n} N$ and $T$ for big enough $n$. 

III. Applying Corollary \ref{cefflec}(I) to the motif $M\otimes \q$ we obtain the equivalence of conditions III.\ref{imce2} and III.\ref{imce4}. It remains to combine Theorem \ref{ttors}(II) with Proposition \ref{pcwh}(7) to obtain that these conditions are also equivalent to condition III.\ref{imce4}.
\end{proof}

\begin{rema}\label{rweirdtorsion}
1. It is  quite remarkable 
 that certain Chow-weight homology groups have finite exponents. 
Note that (in general) Chow-weight homology groups (as well as 
motivic homology ones) can certainly  have really "weird" torsion. 

In particular, our results can be applied to the case $M=\co(h)$, where $h$ is a $\chower$-morphism (cf. Corollary \ref{ccones}); the resulting statement appears to be quite non-trivial and absolutely new. 

2. Now we discuss to which extent our results can be generalized to non-compact motivic complexes.

One can easily verify that (for any $R$) the vanishing of Chow-weight homology 
 statements listed in Corollary \ref{cmothomol} are fulfilled for any $M\in \dmer^{\thomr\le 0}$. In particular, one can apply this statement for $M=M'\otimes \q$ (for $M'\in \obj \dmer$) and the coefficient ring equal to $\q$ (cf. Corollary \ref{chtors}(I,II.\ref{imc2}--\ref{imc5})). 

Note however that one cannot characterise the class  $\dmer^{\thomr\le 0}$ completely (at least, for a "big enough" perfect $k$ if $R$ is not a torsion ring). Indeed, take $M=F_R[-1]$, where $F_R\in \obj \dmger$ is "an $R$-linear version" of the 
 motif $F$ constructed in   Lemma 2.4 of \cite{ayconj} 
(under a mild restriction on $k$; in loc. cit. the case $R=\q$ was considered).  Since there exists a non-zero morphism $R\to F_R$, we obtain that $M\notin \dmer^{\thomr\le 0}$.
On the other hand, the weight complex $t_R(M)$ vanishes (see  
  Proposition 3.2.6 of \cite{bwcomp}); thus all Chow-weight homology groups of $M$ also do.

3. Actually, one can associate a $t$-structure $t^\ii_R$ to any staircase set $\ii$; this is the $t$-structure ("compactly") generated by the corresponding shifts of  (twisted, effective) Chow motives on the category $\dmerb$   (
the existence of $t_R^\ii$ is provided by Theorem A.1 of \cite{talosa}).
\footnote{The 
question whether all of these $t$-structures may be restricted to $\dmer$ appears to be related to the Beilinson-Soule vanishing conjecture.}  

 One can 
 easily verify that 
Corollaries \ref{cmothomol} and \ref{chtors} can be generalized using these $t$-structures. However, we prefer not to extend the corresponding lists of equivalent conditions by 
 $M\in \dmerb{}^{t^\ii_R\le 0}$ (resp. by  $M\otimes \q \in \dm^{eff}_{\q}{}^{t^\ii_\q\le 0}$) since these conditions do not appear to be interesting for a general $\ii$. Note also that some of these $t$-structures are degenerate. 
\end{rema}

\section{Applications to motives and cohomology with compact support}\label{samgc}

In \S\ref{smgc} we recall the theory of motives with compact support (of arbitrary varieties); in particular,  their motivic homology gives Chow groups of varieties. 

In \S\ref{sesn} we use these results to relate the vanishing of lower (rational) Chow groups of varieties to the effectivity of the highest weight factors of their cohomology with compact support (see Theorem \ref{tlec} and Corollary \ref{cmhcs}). We also obtain that the exponents of certain Chow groups as well as of cokernels and kernels of certain "natural" homomorphisms between them are finite (cf. Theorem \ref{ttors}). Furthermore,  in the case where $k$ is finite we relate the effectivity conditions for motives (that can be checked using Chow-weight homology)  to the number of points of varieties over $k$ (modulo powers of $q=\#k$).

In \S\ref{supp} we study 
 conditions ensuring that lower Chow groups of a smooth proper $k$-variety $X$ are supported on its subvarieties of "small" dimension. In contrast to the case of a general $X$ that was considered in \S\ref{sesn}, we are able to express these conditions in terms of  certain decompositions of the diagonal of $X\times X$ (considered as an algebraic cycle). Consequently, we re-prove and extend the corresponding results of \cite{paranconn} and \cite{later}; this section also demonstrates the relation of our methods to earlier (and "more cycle-theoretic") ones.

\subsection {On motives with compact support and their relation to Chow groups}\label{smgc}

Corollary \ref{ccones} (along with Remark \ref{rcones}) can certainly be applied to morphisms of Chow motives that come from (closed) embeddings of smooth projective varieties. This gives conditions equivalent to the assumption that all algebraic cycles of dimension less than $r_1$  on a smooth projective variety $X$ 
 are "supported" on a smooth closed subvariety $Z$ of $X$. However, we would like to demonstrate that our results can also be applied in the case where $X$ or $Z$ is singular.

 For this purpose we  need some basics on 
  motives with compact support. To simplify the exposition, we will mostly concentrate on the case 
	 $R=\q$ that appears to be most important for implications. 

\begin{pr}\label{pmgc}
The functor $\mgcq$ 
 (motif with compact support) from the category $\schpr$ of $k$-varieties with morphisms being proper morphisms of varieties into $\dmeq$ that is  provided by  \S4.1 of \cite{1} along with \S5.3 of \cite{kellyast}, 
 satisfies the following properties.

\begin{enumerate}
\item\label{imceq}
We have $\mgcq(P)=\mgq(P)$  whenever $P\in \spv$. 
More generally, $\mgcq(X)\in \obj \dmgeq$ for any $X\in \var$.

\item\label{imchow}
For any $j\ge 0$, $X\in \var$, and any smooth quasi-projective $U$ we have $\mgq(U)\lan j \ra\perp \mgcq(X)[i]$ for any $i> 0$, whereas for $U$ that is of (constant) dimension $d$ the group 
$\dmgeq(\mgq(U)\lan j \ra, 
 \mgcq (X))$ is 
naturally isomorphic to 
the group 
$\chow_{j+d}(U\times X,\q)$  (of  $\q$-linear cycles of dimension $j+d$ modulo rational equivalence; cf. Theorem 5.3.14 of \cite{kellyast}). 



\item\label{imctr} If $i:Z\to X$ is a closed embedding of $k$-varieties and $U=X\setminus Z$ then there exists a distinguished triangle 
\begin{equation}\label{eimctr}
\mgcq(Z)\stackrel{\mgcq(i)}{\to} \mgcq(X)\to \mgcq(U)\to \mgcq(Z)[1].\end{equation}

\item\label{imcpr} If $X,Y\in \var$ then $\mgcq(X\times Y)\cong \mgcq(X)\otimes \mgcq(Y)$.

\item\label{imcaff} For any $r\ge 0$ we have $\mgcq(\aff^r)\cong \q\lan r\ra$.
\end{enumerate}

\end{pr}
\begin{proof}
In Definition 5.3.1, Lemma 5.3.6, Proposition 5.3.12(1) (combined with Theorems 5.2.20, 5.2.21, and 5.3.14), 
Proposition 5.3.5,  Proposition 5.3.8, and Corollary 5.3.9 of \cite{kellyast}, respectively, the obvious $\zop$-linear analogues of these statements were justified. 
Then the  $\q$-linear results in question follow immediately; see Proposition \ref{plocoeff}(I.1) and Proposition 1.3.3 of \cite{bokum}.
\end{proof}

\begin{rema}\label{rmgq}
1. A more fancy way to study motives with compact support 
 is to use certain categories of relative motives (as considered by Voevodsky, Ayoub and others). To be more precise, we recall that in \S8 of \cite{cdint} it is proved that (for any $\zop$-algebra $R$) this approach yields the category $\dmerb$ as mentioned in Remark \ref{rhomr} (see Proposition 8.1 of loc. cit.), and it is also explained how to "translate" 
 (the $R$-linear version of) Proposition \ref{pmgc} into  this language. This approach simplifies treating cohomology with compact support. 
However, the authors believe that this more advanced 
  method is superfluous for the purposes of the current paper. 

2. Actually, the functor $\mgq$ is defined on the category of all $k$-varieties, and we have $\mgq(X)=\mgcq(X)$ whenever $X$ is proper. However, we will never apply any properties of  $\mgq(X)$ for a singular $X$ in the current paper.
\end{rema}

Now we relate motives with compact support to the weight structure $\wchow$. 

\begin{lem}\label{lwdc}
 Let $K$ be a universal domain containing $k$, $X\in \var$.

1. 
Then $\mgcq(X)\in \dmgeq_{\wchow\ge 0}$. Moreover, if $X$ is smooth and proper then $\mgcq(X)=\mgq(X)\in \dmgeq_{\wchow= 0}$. 

2. 
For any $j\ge 0$ and any field extension $k'/k$ the group  $\chw_{j}^{0}(\mgcq(X)_{k'})$ is naturally isomorphic to $\chow_{j}(X_{k'})$. 

3. Let 
 $M\in \dmgeq_{\wchow=0}$ and $N\in \dmgeq_{\wchow\ge 0}$.  Then a morphism $h:M\to N$ yields a weight decomposition of $N$ if and only if the homomorphisms 
$\chowm_{j}(h_K,\q) $ are surjective for all $j\ge 0$. 

4. 
If $g:Y\to Z$ is a proper surjective morphism of varieties and $h=\mgcq(g)$ then the homomorphisms $\chow_{j}(g_K,\q)$ 
and $\chw_{j}^{0}(h_K,\q)$ are surjective. 
Moreover, if 
 $Y$ is smooth and proper
 then $h$ gives a weight decomposition of  $\mgcq(Z)$.

5.  Assume that $X$ is proper. Then  for any $Y$ and $Z$ as above, any  closed embedding $i$ of $Z$ into $X$, 
 and $U=X\setminus Z$ there exists a choice of $t(\mgcq(U))$ of the form $\dots\mgq(Y)\stackrel{\mgq(i\circ g)}{\to}\mgq(X)\to 0\to \dots$ (where $\mgq(X)$ is in degree $0$). 


6. If $X$ is  of dimension at most $r$ (for some $r\ge 0$) then $\mgcq(X)$ is an object of $ d_{\le r}\dmgeq$.  
\end{lem}
\begin{proof}
1. The first part of the assertion is immediate from Proposition \ref{pmgc}(\ref{imchow}) (see Proposition \ref{pbw}(\ref{iort},\ref{igenw0})).

To get the "moreover" part 
 it remains to recall Proposition \ref{pmgc}(\ref{imceq}) and Proposition \ref{pwchow}(\ref{ip2}).

2. The statement is immediate from  the previous assertion combined with Proposition \ref{pcwh}(7). 

3. Clearly, $h$ yields a weight decomposition of $N$ if and only if for $C=\co(h)$ we have $C\in \dmgeq_{\wchow\ge 1}$. Next,  by Theorem \ref{tmain}(3) combined with Remark \ref{rmain}(\ref{iq}),  the latter assumption is fulfilled if and only if $\chw_{j}^{i}(C_K)=\ns$  for all $i,j\ge 0$. 

Moreover, we have $\chw_{j}^{i}(M_K)=\chw_{j}^{i}(N_K)=\ns $ if $j\ge 0$ and $i\ge 1$, and $\chw_{j}^{i}(M_K)=\ns$ also if $i< 0$ (and $j\ge 0$).  Thus the long exact sequences relating Chow-weight homology of $M$, $N$, and $C$ 
yields that $h$ satisfies the condition in question if and only if  the homomorphisms $\chw_{j}^{0}(h_K)$  are surjective for all $j\ge 0$.  Hence it remains to apply assertion 2. 


4. According to assertion 2, the surjectivity of $\chw_{j}^{0}(h_K,\q)$ is equivalent to that of $\chow_{j}(g_K,\q)$. The latter surjectivity is rather obvious, since for any  Zariski point $z$ of $Z_K$ one can choose a point $y$ of $Y_K$ that is of finite degree over $z$.

To obtain the "moreover" part of the assertions it remains to invoke assertion 3.

5. Applying Proposition \ref{pmgc}(\ref{imceq},\ref{imchow}) along with Proposition \ref{pbwcomp}(\ref{iwcex}) we obtain that it suffices to find a choice of $\wchow_{\le 0}\mgcq(Z)$ 
and calculate the composed morphism $\wchow_{\le 0} \mgcq(Z)\to \mgcq(Z) \stackrel{\mgcq(i)}{\to} \mgcq(X)$. Hence it suffices to apply the functoriality of $\mgcq$ 
along with assertion 4.

6. Proposition \ref{pmgc}(\ref{imctr}) implies that it suffices to prove the statement under the assumption that $X$ is smooth. Moreover, obvious induction allows us to assume that $\mgcq(U)\in d_{\le r-1}\dmger$ whenever $U$ is of dimension at most $r-1$. Hence $\mgcq(X')\in d_{\le r}\dmger$ whenever $X'$ is a smooth variety of dimension $r$ that either possesses a smooth compactification (see Proposition \ref{pmgc}(\ref{imceq})) or contains an open dense subvariety $U'$ such that $\mgcq(U')\in \obj d_{\le r}\dmger$. Now, Corollary 1.2.2 of \cite{bzp} implies that (for any smooth $X$ of dimension $r$) there exists an open dense $U\subset X$ such that $\mgq(U)$ is a retract of $\mgq(U')$, where $\dim U'=r$ and $U'$ possesses a smooth compactification. It remains to note that the 
 duality provided by 
 Theorem 5.3.18 of \cite{kellyast} immediately  implies that $\mgcq(U)$ is a retract of $\mgcq(U')$ under these assumptions.
\end{proof}

Now we combine 
 our lemma with Corollary \ref{cefflec}.

\begin{pr}\label{psingul}
Let $r\ge 0$; assume that $K$ is a universal domain containing $k$

I. Let $g:Y\to X$ be a proper morphism of $k$-varieties, $Z=\imm g$, $U=X\setminus Z$. 
Denote $\mgcq(g)$ by $h$,  $M=\co(h)$, and $C=\mgcq(U)$. 

 Then the following conditions are equivalent.

\begin{enumerate}
\item\label{iss1} 
$M\in \dmgeq_{\ge 0}^{\lan r \ra}$ (see Definition \ref{defflec}).

\item\label{iss3} The homomorphisms $\chow_j(g_K,\q)$ are surjective for $0\le j<r$.

\item\label{iss2} $\chow_{j}(U_K,\q)=\ns$ for $0\le j<r$.

\item\label{iss4} 
$C\in \dmgeq_{\ge 0}^{\lan r \ra}$. 
\end{enumerate}

II. In particular, for $X\in \var$ and $N= \mgcq(X)$ the following conditions are equivalent.

\begin{enumerate}
\item\label{isv1} 
$N\in \dmgeq_{\ge 0}^{\lan r \ra}$. 

\item\label{isv3}  $\chowm_j(N_K,\q)=\ns$  for $0\le j<r$.

\item\label{isv2} $\chow_{j}(X_K,\q)=\ns$ for $0\le j<r$.

\end{enumerate}

III. 
Adopt the assumptions and notation of assertion I, and suppose in addition that $\mgcq(X)\in \dmgeq_{\wchow=0}$. Then the following conditions are equivalent.

\begin{enumerate}
\item\label{ism1} 
$N[-1]\in \dmgeq_{\ge 0}^{\lan r \ra}$.

\item\label{ism3} The homomorphisms $\chow_j(g_K,\q)$ are surjective for all $j$ and  
 are bijective for $0\le j<r$. 
\end{enumerate}

\end{pr}
\begin{proof}
I. Let $j\ge 0$.
Lemma \ref{lwdc}(1,2) implies that the motives $\mgcq(Y)$, $\mgcq(Z)$,  $\mgcq(X)$, $M$, and $C$ belong to $\dmgeq_{\wchow\ge 0}$. Moreover,
$\chw_j^0(J)\cong \chowm_j(J)$ if  $J$  equals either  $\mgcq(Y)$, $\mgcq(Z)$, or  $\mgcq(X)$, and  $\chw_j^i(J)=\ns$ for all these motives and $i>0$. Thus $\chw^i_j(M)=\chw^i_j(C)=\ns$ for all $i>0$  
and there is a  long exact sequence 
\begin{equation}\label{elong}
\begin{gathered}
\dots \to \chw_j^{-1}(\mgcq(X)) \to \chw_j^{-1}(M_K)\to  \chow_j(Y_K)\\
\stackrel{\chow_j(g_K)}\to \chow_j(X_K)\to \chw_j^0(M_K)\to \ns.
\end{gathered}
\end{equation}
Combining it with Corollary \ref{cefflec}(I) we immediately obtain the equivalence of our conditions I.\ref{iss1} and I.\ref{iss3}.  Moreover, this corollary implies  the equivalence of conditions I.\ref{iss2} and I.\ref{iss4}.

Next,  Proposition \ref{pmgc}(\ref{imctr}) implies that for the corresponding embedding $i:Z\to X$ we have $\co(\mgcq(i))\cong C$.
Thus we obtain a long exact sequence  $$\dots\to \chow_j(Z_K)
 \to \chow_j(X_K)\to \chw_j^0(C_K)\to \ns,$$ and arguing as above we obtain that our condition I.\ref{iss4} is equivalent to the surjectivity of the homomorphism $\chow_j(i)$. 
 Lastly,  Lemma \ref{lwdc}(4) implies that for the corresponding $g':Y\to Z$ the homomorphism $\chow_j(g')$ is surjective. Hence the surjectivity of $\chow_j(i)$ is equivalent to condition I.\ref{iss3}. 

II. Taking $Y$ to be the empty variety (and the corresponding $g$) we deduce the result from assertion I immediately (since $\mgcq(Y)=0$).

III. 
Similarly to the proof of assertion I, Theorem \ref{tmain}(3) implies that the surjectivity of $\chow_j(g_K,\q)$   for all $j\ge 0$ is equivalent to $N[-1]\in \dmgeq_{\wchow\ge 0}$.

  Moreover, since $\mgcq(X)\in \dmger_{\wchow=0}$, the long exact sequence (\ref{elong}) transforms into  $\dots \to \ns \to \chw_j^{-1}(M_K)\to  \chow_j(Y_K)\stackrel{\chow_j(g_K)}\to \chow_j(X_K)\to \chw_j^0(M_K)\to \ns$. Recalling Corollary \ref{cefflec}(I) we obtain the result.
 \end{proof}

\begin{rema}\label{rsingul}
\begin{enumerate}
\item\label{irsi1} One can easily construct rich families of examples for parts II and III of our proposition (and this certainly gives examples for assertion I as well). 

Firstly, let $X\in \var$ and $r> 0$. Then combining Proposition \ref{pmgc}(\ref{imcaff},\ref{imctr}) with Lemma \ref{lwdc}(1) and Corollary \ref{cchows}(1) we obtain $\mgcq(X\times \aff^r)\in \dmgeq_{\wchow\ge 0}\lan r \ra\subset  \dmgeq_{\ge 0}^{\lan r \ra}$. Moreover, the aforementioned statements easily imply that for any open dense embedding $X\to X'$ the motif $\mgcq(X)$ belongs to $\dmgeq_{\ge 0}^{\lan r \ra}$ whenever $\mgcq(X')$ does.

Secondly, the structure morphism $\p^1\to \pt$ obviously satisfies the (equivalent) conditions of Proposition \ref{psingul}(III) for $r=1$. Next we can multiply this example by $X'\times \aff^{r'-1}$ for any $X\in \var$ and $r'>0$ to obtain an example for $r=r'$.


Note also that the aforementioned statements yield 
 that  $\mgcq(X)$ belongs to $\dmgeq_{\wchow=0}\lan l \ra\subset \dmgeq_{\wchow=0}$ whenever $X=\aff^l\times P$ for any $l\ge 0$ and any smooth proper $k$-variety $P$; see Proposition \ref{pmgc}(\ref{imcaff}, \ref{imcpr}) and Lemma \ref{lwdc}(1). More generally, it suffices to assume that $X$ is an affine bundle over $P$. Indeed, the Mayer-Vietoris triangle for the functor $\mgq(-)$ along with its homotopy invariance (i.e., the obvious morphism $\mgq(U\times \aff^l)\to \mgq(U)$  is an isomorphism for any $U\in \sv$ and $l\ge 0$) yields that $\mgq(X)\cong \mgq(P)$ in this case, and it remains to apply duality similarly to the proof of Lemma \ref{lwdc}(6). 

\item\label{irsi2} 
In the case where 
 the varieties 
  $X$ and $Y$ in part I of our proposition admit smooth compactifications one can 
 may possibly deduce it from Proposition 6.1 of   \cite{paranconn} (or prove using similar methods; see Remark \ref{rparan}(2) below). 
	 However, even the case where $p>0$ and the varieties in question are smooth but are not known to admit smooth compactifications appears to be more difficult to
 study using the  "explicit correspondence" methods of ibid. 
 Moreover, the case where 
 $X$ and 
 $Y$ are singular appears to be completely out of reach for this approach. Note also that the arguments above clearly provide us with plenty of singular examples for our proposition.

\item\label{irsta} For any staircase set $\ii$ containing  $[1,+\infty) \times [0,+\infty)$ one can  construct lots of examples of $X\in \var$ such that  for $M=\mgcq(X)$ we have $\chw_{j}^{i}(M_K,\q)=\ns$ for all function fields $K/k$ and all $(i,j)\in \ii$; 
  the arguments of part \ref{irsi1} of this remark are quite sufficient for this purpose. To simplify the formulas, we will justify our claim in the case $\ii=\ii^r=\{(i,j):\ i+r>j\ge 0\text{ or } i>0,\ j\ge 0\}$  for some $r>0$ (cf. Corollary \ref{cmothomol}); yet the adjustment to the general case is obvious.

So, we will say that a variety $U/k$ is {\it of type $s\ge 0$} if it is an affine bundle of dimension $s$ over some 
 $Y\in \var$.  Then we have $\mgcq(U)\in \obj \dmgeq_{\wchow \ge 0}\lan s \ra$. 
 Indeed, 
 Proposition \ref{pmgc}(\ref{imcaff}, \ref{imcpr}) 
 implies that $\mgcq(U)\in \obj \dmgeq\lan s \ra$ if this bundle is trivial; hence it suffices to apply 
 the distinguished triangle \ref{eimctr} to obtain that this statement is also valid  for an arbitrary bundle, and conclude by applying Corollary \ref{cchows}(1).

Thus we can take $X=U^{r}\setminus \cup U_l^{r-1}$ whenever the variety $U^r$ is of type $r$,  all $U_l^{r-1}$ are its closed subvarieties of type $r-1$, and for each subset of $\{U_l^{r-1}\}$ of cardinality $s>0$ the intersection of its elements is a variety of type $r-s$.

Note also that 
 for this particular choice of $\ii$ we can take $U$ to be an affine space of dimension $r$ and $U_l^{r-1}$ to be its  affine subspaces of codimension $1$, and one can check that $M$ does not fulfil the conditions $\chw_{j}^{i}(M_K,\q)=\ns$ for  $(i,j)\in \ii'$ whenever $\ii'$ is a staircase set that is not a subset of $\ii$ by looking at its (\'etale or singular) cohomology; see Proposition \ref{phomol} and Remark \ref{rdetect}(\ref{ideg}) (cf. also Proposition 4.3.5 of \cite{bkl}). 

Recall also that $\chw_{j}^{i}(M_K,\q)=\ns$ for (all function fields $K/k$ and) all $(i,j)\in \ii^r$ if and only if $\chow_0(X_K,\q,i)=\ns$ for $0\le i<r$ (since $M_K\in \dmgeq_{\wchow\ge 0}$). Possibly, the authors will study these matters in more detail in a subsequent paper.


\item\label{irscorr}
The equivalent conditions of Proposition \ref{psingul}(II) can also be re-formulated as follows: there exists a smooth projective $k$-variety $P$ of constant dimension $s\ge 0$ and a $\q$-linear algebraic  cycle $\eta$ of dimension $s+r$ in $ P\times X$ that (if considered as a correspondence via Proposition \ref{pmgc}(\ref{imchow})) induces a surjection $\chow_{j-r}(P_K,\q)\to \chow_j(X_K,\q)$ for all $j\ge 0$ (here we set $\chow_{j-r}(P_K)=\ns$ if $j<r$). Indeed, the "if" implication is obvious here (see condition \ref{isv2} in  Proposition \ref{psingul}(II)) and  it suffices to combine Corollary \ref{cefflec}(I) (see condition \ref{ielecext} in it) with the obvious "correspondence version" of Lemma \ref{lwdc}(3)  to obtain the converse implication.

In \S\ref{supp} below we will demonstrate that in the case where $X$ is smooth (and possesses a smooth compactification) this condition  also has 
 a "decomposition of the diagonal" re-formulation (in terms of algebraic cycles) thus re-proving Proposition 6.1 of   \cite{paranconn}.

\end{enumerate}
\end{rema}

We need some more preparation for the next subsection. To relate our results to "the usual" cohomology with compact support we need the following statement.

\begin{pr}\label{pgs}
1. Let $\ff$ be a Galois extension of $k$, and denote the Galois group of $\ff/k$ by $G$.
 Then there exists a cohomological functor $H=\hetl(-_{\ff})$ from $\dmgeq$ to the category $\ql[G]-\modc$ of continuous $\ql[G]$-modules such that for any $X\in \var$ and $i\in \z$ for $M=\mgcq(X)$ (see Proposition \ref{pmgc}) the module $H^i(M)=H(M[-i])$ is canonically isomorphic to  the module $H^{i}_{c,et}(X_{\ff})$ of $i$-th \'etale cohomology of $X$ with compact support. Moreover, these isomorphisms are $\schpr$-natural. 

2. Assume that $k$ is a subfield of $\com$ and $\hsing$ is the $\q$-linear singular cohomology functor with the target being the category of mixed Hodge structures. 
  Then for any $X\in \var$ 
 the factors of the Deligne weight filtration on the $\mhs$-valued singular cohomology of $X$ with compact support are $\schpr$-naturally isomorphic to the weight factors of $\hsing^*(\mgcq(X))$. 
\end{pr}
\begin{proof}
1. For any $n\ge 0$ the existence of $\znz$-\'etale cohomology functor $H_{et}(-_{\ff},\znz)$ from $\dmgep$ into the corresponding category $ \znz[G]-\modc$ that satisfies the similar "compatibility with cohomology with compact support" 
 property is given by Proposition A.2 of \cite{kellyweighomol}. Passing to the inverse limit we obtain a $\zl$-\'etale cohomology functor $H_{et}(-_{\ff},\zl)$ from $\dmgep$ into $ \zl[G]-\modc$ that satisfies similar properties. Alternatively, one may apply Remark 9.6 of \cite{cdint} to get this functor (cf. Remark \ref{rmgq}(1)).

This $\zl$-functor functor obviously gives a cohomological functor from the Verdier localization $\dmgep\otimes \q$ of $\dmgep$ by its subcategory of torsion objects into the category $\ql[G]-\modc$.  Now recall that  the category $\dmgeq$ is equivalent to the Karoubi envelope of  $\dmgep\otimes \q$ of $\dmgep$ by its subcategory of torsion objects; see 
Proposition \ref{plocoeff}(I.1).  Using the functoriality of the Karoubi envelope construction we deduce the existence of a cohomological functor $H=\hetl(-_{\ff})$ as desired.

2. Theorem 3 of \cite{gs} says that the  factors of the weight filtration on $H_{c,sing}^{i}(X)$ are functorially isomorphic (as pure Hodge structures) to the corresponding $E_2$-terms of their weight spectral sequence (as in Remark \ref{rdetect}(\ref{ideg})). Now, these $E_2$-terms in loc. cit. are expressed (cf. Proposition \ref{pwss}(2)) in terms of their weight complex $W(X)$ of $X$ as provided by Theorem 2 of ibid (cf. Remark \ref{rwc}(2)). Thus it remains to apply Theorem 3.1 of \cite{kellyweighomol} (or recall that the composition $t\circ \mgcq$ is essentially isomorphic to the weight complex functor of ibid. according to Proposition 6.6.2 of \cite{mymot}; cf. Remark \ref{rwc}(2). 
\end{proof}

\begin{rema}\label{rpgs} The authors do not know whether the known properties of singular cohomology of motives are sufficient to verify that  the singular cohomology of $\mgcq(X)$  is isomorphic to the corresponding cohomology of $X$ with compact support as mixed Hodge structures. Yet this statement is most probably true.
\end{rema}

Now let us discuss the distinction of the case $R=\q$  from the general one for the results of this subsection. Here and in \S\ref{supp} we will put into remarks those statements of this sort that will not be 
 applied in the current paper.

\begin{pr}\label{pirsir}
Let $R$ be a commutative  unital $\zop$-algebra.

1. Then all the parts of Proposition \ref{pmgc} along with  Lemma \ref{lwdc}(1,2,6) 
  extend to the $R$-linear setting in the obvious way.

2. 	Let 
 $M\in \dmger_{\wchow=0}$ and $N\in \dmger_{\wchow\ge 0}$.  Then a morphism $h:M\to N$ yields a weight decomposition of $N$ if and only if the homomorphisms 
$\chowm_{j}^{0}(h_K,R) $ are surjective for all $j\ge 0$ and all 
 function fields  $K/k$. 

3. For any $Z\in \var$ there exists a smooth projective $k$-variety $Y$ along with a morphism $h:\mgr(Y)=\mgcr(Y)\to \mgcr(Z)$ such that $\dim Y=\dim Z$ and $h$ gives a weight decomposition of $\mgcr(Z)$ (cf. Lemma \ref{lwdc}(4)). 

\end{pr}
\begin{proof}
1. As we have already said in the proof of Proposition \ref{pmgc}, the corresponding statements of \cite{kellyast} give the $\zop$-linear versions all the parts of Proposition \ref{pmgc}. The $R$-linear versions for arbitrary $R$ follow easily; see Proposition 1.3.3 of \cite{bokum} for the corresponding well-known properties of the connecting functor $\dmgep\to \dmger$.

Certainly, these statements give the $R$-linear version of Lemma \ref{lwdc}(1,2) similarly to the proof of the $\q$-linear assertions.

It remains to verify the $R$-linear version of  Lemma \ref{lwdc}(6).   Now, 
	the argument that we have used for the proof in the $\q$-linear setting actually yields the corresponding result whenever $R=\z_{(\ell)}$ (since this was the case considered in \S1.2 of \cite{bzp}), where $\ell$ is an arbitrary prime distinct from $p$. Applying this statement for all $l\in \p\setminus\{p\}$ along with  Corollary 0.2 of \cite{bsnull} (cf. also Appendix A.2 of \cite{kellyth}) and Proposition \ref{plocoeff}(I.1)) 	and we obtain the result in question for $R=\zop$. 
	 Applying Proposition 1.3.3 of \cite{bokum} once again we conclude the proof. 
 
2.  
The easy proof of Lemma \ref{lwdc}(3) carries over to this $R$-linear setting without any difficulty.

3. The statement is immediate from the $R$-linear version of Lemma \ref{lwdc}(6) (see assertion 1).
\end{proof}


\begin{rema}\label{rpirsi}
1. It is easily  
	seen that it is not sufficient to assume that $g:Y\to Z$ is (proper and) surjective to claim that $h=\mgcr(g)$ gives a weight decomposition of $\mgcr(Z)$  (see Proposition \ref{pirsir}(3) and  Lemma \ref{lwdc}(4)) in the case of a general $R$. 

 Hence one needs some more restrictive assumptions on the morphism $g$ 
	 to ensure that all the $R$-linear versions of the conditions  in Proposition \ref{psingul}(I) are equivalent (i.e., to ensure that condition I.\ref{iss2} implies condition I.\ref{iss3}). 
Note however that this does not make a problem for the proof of 	the $R$-linear version of Proposition \ref{psingul}(II) (still one  should consider the 
 groups $\chow_{j}(X_K,R)$ for $K$ running through all function fields over $k$ in it; see Proposition \ref{pirsir}(2)).

2. The arguments that were used in Remark \ref{rsingul}(\ref{irsi1},\ref{irsta}) (for constructing families of examples) obviously carry over to the $R$-linear setting without any difficulty (see Proposition \ref{pirsir}(1)).

Similarly, 
  Remark \ref{rsingul}(\ref{irscorr}) extends to the $R$-linear setting also; one should just consider the Chow groups of $P_K$ and $X_K$ for all function fields $K/k$ in the corresponding criterion.
\end{rema}

\subsection{Relating Chow groups 
 to cohomology with compact support and the number of points of varieties}\label{sesn}

Let us apply results of previous sections to motives with compact support of varieties. 

\begin{theo}\label{tlec}
Let $U\in \var$, $r\ge 0$, and assume that $K$ is a universal domain containing $k$.

I. Assume that $\chow_j(U_K,\q)=\ns$ 
  for $0\le j<r$. Then the following statements are valid. 

1. There exists  $E>0$ such that $E\chow_{j}(U_{k'},\zop)=\ns$ for all $0\le j<r$  and all field extensions $k'/k$.

2.  For $M=\mgczop(U)$, any   cohomological functor $H$  from 
 $\dmgep$ into a $\q$-linear 
  abelian category $ \au$, and any $q>0$ 
we have $E_2^{0q}T_{\wchow}(H,M)\cong 
 (\grwc^0 H^q)(M)$ (see Proposition \ref{pwss}(2)  and Definition \ref{dwfil}(3) for this notation) and  
there   exists $P\in \spv$
such that  this object is a subobject of $H^q(\mgp(P)\lan r\ra)$. 

Moreover, if $k$ is a subfield of $\com$ then the $q$-th (Deligne) weight factor of $H^{q}_c(U_{\com})$ of the ($\q$-linear) singular cohomology of $U$ with compact support   is $r$-effective as a pure Hodge structure. Furthermore, the same property of Deligne weight factors of $\ql$-\'etale cohomology $H^{q}_c(U_{k^{alg}})$ is fulfilled if $k$ is an essentially finitely generated field (see Definition \ref{dhcho}(\ref{idh3})) and $l\neq p$. 

3. Assume that $U=X\setminus Z$, where $Z$ is the image of a proper morphism $g:Y\to X$ of $k$-varieties. Then there  exists  $E>0$ such that the cokernels of the homomorphisms $\chow_j(g_{k'},\zop)$ are annihilated by $E$ whenever $0\le j<r$, and  $k'/k$ is a field extension, and for $H$ that is either singular or \'etale cohomology the object $\ke (\wed_q H^{q}_c(X)\to \wed_q H^{q}_c(Y))$ is $r$-effective. 

4. The motif $\mgcq(U)$ (see Proposition \ref{pmgc}) is an extension of  an element of $\dmgeq_{\wchow\ge 1}$ by an object of $\choweq\lan r\ra$. 



II.  Assume that $g:Y\to X$ is a proper morphism of $k$-varieties,  
 $X$ is an affine bundle over a smooth  proper variety, whereas the homomorphisms $\chow_j(g_K,\q)$ are surjective for all $j\ge 0$ and are bijective if $0\le j<r$. Then there  exists  $E>0$ such that  the cokernel of the homomorphism $\chow_j(g_{k'},\zop)$ is annihilated by $E$ for all $j\ge 0$ and the kernel of  $\chow_j(g_{k'},\zop)$ is annihilated by $E$ whenever $0\le j<r$ and $k'/k$ is a field extension.

Moreover, for $H$ that is either the singular cohomology functor or the \'etale cohomology one as in assertion I.2  the corresponding morphisms $\wed_q H^{q}_c(X)\to \wed_q H^{q}_c(Y)$ are  surjective and their kernels are $r$-effective. 


III. Assume that $U=U_1\times U_2$, where $U_1,U_2\in \var$, and there exist $r_1,r_2\ge 0$ such that $r=r_1+r_2$ and $\chow_j(U_{iK},\q)=\ns$  for $0\le j<r_i$ and $i=1,2$.  Then $\chow_j(U_K,\q)=\ns$  for $0\le j<r$.  

\end{theo}
\begin{proof}
All of these statements are rather easy implications of earlier results.

I. Let us use the symbol $M$ for $\mgczop(U)$. Then $M\in \dmgep_{\wchow\ge 0}$ by the $\zop$-linear version of Lemma \ref{lwdc} (see Proposition \ref{pirsir}(1)); we also have $\mgczop(T)\in  \dmgep_{\wchow\ge 0}$ for $T$ equal to either $X$, $Y$, or $Z$ in assertion I.3. Moreover, 
Proposition \ref{psingul}(I) 
 implies that $M\otimes \q\in \dmgeq_{\ge 0}^{\lan r \ra}$.  Since any cohomological functor  from $\dmgep$ into a $\q$-linear functor factors through $\dmgeq$ (see Proposition \ref{plocoeff}(I.1)), assertion I.1 follows from Corollary \ref{chtors}(III) (see condition \ref{imce4} in it). 

Next, the first part of assertion I.2 follows from Proposition \ref{phomol} (see also Remark \ref{rdetect}(\ref{ideg})). To
 study  the weight factors of  the cohomology of $X$ with compact support one should take $H=\hsing(-_{\com})$ (resp. $H=\hetl(-_{k^{alg}})$) and  apply (the corresponding parts of)  Proposition \ref{pgs} along with Remark \ref{rdetect}(\ref{ideg}) to relate them to the weight factors of $H^*(M)$. 

 Furthermore, assertion I.4 follows from Corollary \ref{cefflec}(I).

To prove assertion I.3 we argue similarly to the proof of  Proposition \ref{psingul}. Firstly we complete the morphism $\mgczop(Y)\to \mgczop(Z)$ to a distinguished triangle \begin{equation}\label{eltr} \mgczop(Y)\to \mgczop(Z)\to J\to \mgczop(Y)[1].\end{equation} 
Then for any $j\ge 0$ and $k'/k$ we have a long exact sequence  $\dots\to  \chow_j(Y_{k'},\zop)\to \chow_j(Z_{k'},\zop)\to \chowm_j(J_{k'},\zop)\to \ns.$
Next,  $J\otimes \q\in \dmgeq_{\wchow\ge 1}$ according to 
Lemma \ref{lwdc}(4) (combined with Proposition \ref{psingul}(III); one should take $r=0$ in it).  Applying Theorem \ref{ttors}(1) we obtain that 
 the groups $\chowm_j(J_{k'},\zop)\cong \cok(\chow_j(Y_{k'},\zop)\to \chow_j(Z_{k'},\zop))$ are annihilated by some constant $E'>0$ (and $E'$ does not depend on $j$ and ${k'}$). Similarly,  the functor $M\mapsto  
\grwd_qH^q(M)$ is cohomological (for $H$ that is either singular or \'etale cohomology and $q\ge 0$); since $\wed_q H^q(\mgcq(Y)[1])=0$ (apply Proposition \ref{phomol} and Remark \ref{rdetect}(\ref{ideg}) once again), we obtain that $\wed_q H^q(\mgcq(Y))$ surjects onto $\wed_q H^q(\mgcq(Z))$. Thus it suffices to verify that the cokernels of homomorphisms $\chow_j(Z_{k'},\zop) \to \chow_j(X_{k'},\zop)$ are annihilated by some constant $E''$ (for all field extensions $k'/k$), and that the object $\ke (\wed_q H^{q}_c(X)\to \wed_q H^{q}_c(Z))$ is $r$-effective for $H$ that is either \'etale or singular cohomology (here we invoke Proposition \ref{psingul} once again). Hence considering the long exact sequences   $\dots\to  \chow_j(Z_{k'})\to \chow_j(X_{k'},\zop)\to \chow_j(U_{k'},\zop)\to \ns$ and $0\to \wed_q H^{q}_c(U)\to \wed_q H^{q}_c(X)\to \wed_q H^{q}_c(Z)\to \dots$ we reduce assertion I.3 to assertion I.2.

II. The proof is quite similar to that of assertion I.3 (in the simpler case $Y=Z$); one should only apply Proposition \ref{psingul}(III) instead of part I of the proposition (cf. also the proof of this proposition and Remark \ref{rsingul}(\ref{irsi1})). 

III. According to Proposition \ref{psingul}(II), our vanishing assumptions imply that $\mgcq(U_i)\in \dmgeq_{\ge 0}^{\lan r_i \ra}$ for $i=1,2$. Thus it remains to invoke Corollary \ref{cefflec}(II) along with Proposition \ref{pmgc}(\ref{imcpr}) to obtain that $\mgcq(U_1\times U_2)\in \dmgeq_{\ge 0}^{\lan r_1+r_2 \ra}$, and apply the converse implication in Proposition \ref{psingul}(II).
\end{proof}

\begin{rema}\label{rtlec}

\begin{enumerate}
\item\label{itlec1} We  did not put all possible statements of this sort into a single theorem. In particular, we could have considered Chow-weight homology for various staircase sets $\ii$; cf. Theorem \ref{tstairs} and Corollary \ref{cmhcs} below. 

\item\label{itlec2} Recall also that 
 the assumption of the $r$-effectivity of the $q$-th (Deligne) weight factor of $H^{q}_c(U_{\com})$ of the singular cohomology of $U$ with compact support is conjecturally equivalent to the vanishing of $\chow_j(U,\q)$  for $0\le j<r$; one should just combine the aforementioned results on cohomology with Proposition \ref{pconj}.

\item\label{itlec3} Now let us discuss examples for our theorem.

Recall that a large family of examples can be constructed by means of Remark \ref{rsingul}(\ref{irsi1}) (cf. also part \ref{irsta} of this remark); however, these examples may also be treated "directly". 

So it may be more interesting to apply part II of our theorem to the case where $g$ is (proper and) surjective and $r=0$ (see Lemma \ref{lwdc}(4)); the 
 resulting statements appear to be new. 

 Moreover, the morphism $g:Y\to \pt$ gives an example to part II for $r=1$ whenever $Y$ is (proper and) 
  {\it rationally chain connected},  i.e., if (for $K$ as above) any two closed points  of $Y_K$ can be linked by a connected chain of rational projective curves  (cf. Definition IV.3.2.1, Exercise IV.3.2.5,  Corollary IV.3.5.1, and Proposition IV.3.6.2 of \cite{kollarb}). It is easily seen that in this case we have $\chow_0(Y_K)\cong \chow_0(\pt_K)\cong \z$ (see Theorem IV.3.13.1 of ibid.).  

\end{enumerate}
\end{rema}

Applying part II of Corollary \ref{chtors} instead of its part III (that was used in the proof of Theorem \ref{tlec}) we easily obtain the following statement (in which the vanishing of lower Chow groups condition is replaced by the vanishing of higher Chow groups of $0$-cycles).

\begin{coro}\label{cmhcs}
Let $U,r, K$ be as in Theorem \ref{tlec}, and $\chow_{0}(U_{K},\q,j)=\ns$ for $0\le j<r$. 

1. Then there exists  $E>0$ such that $E\chow_{0}(U_{k'},\zop,j)=\ns$ for all $0\le j<r$  and all field extensions $k'/k$.

2. If $k$ is a subfield of $\com$  then  for any $q,s\ge 0$ the $q-s$-th (Deligne) weight factor of $H^{q}_c(U)$ of the singular cohomology of $U$ with compact support and   is $r$-effective as a pure Hodge structure. Furthermore, the same property of Deligne weight factors of $H^{q}_c(U)$ is fulfilled for the $\ql$-\'etale cohomology of $U_{k^{alg}}$ with compact support if $k$ is an essentially finitely generated field (see Definition \ref{dhcho}(\ref{idh3})) and $l\neq p$. 

\end{coro}
\begin{proof}
The proof is quite similar to that of Theorem \ref{tlec}(I.1--2); one should only recall that $\chow_{0}(U_{k'},\q,j)\cong \chowm_0(\mgczop(U)_{k'},\zop,j)=\ns$ if $j<0$,  and apply  Corollary \ref{chtors}(II) to the motif $\mgczop(U)[-r]$. 
\end{proof}

\begin{rema}\label{rrconj} Note also that in the case $k\subset \com$ the $r$-effectivity of $\grwd_{q-s}H^{q}_c(X) $ for all $s\ge 0$ is obviously equivalent to the $r$-effectivity of  $H^{q}_c(X)/ \wed_{q-s-1}H^{q}_c(X)$ in the category $\mhse$; cf. Remark \ref{rconj}(1).
\end{rema}

Now we discuss the relation of our results to the number of points of varieties over finite fields. The following proposition is 
 essentially a combination of Theorem \ref{tmain} with the consequences of the Grothendieck-Lefschetz trace formula that are probably well-known to experts in the field.

\begin{pr}\label{pesn}

1. Assume that $k$ is a subfield of the finite field $\fq$.  Then there exists a function $Card_q$ from $\obj \dmgeq$ into the ring $A$ of integral algebraic numbers such that for any distinguished triangle $M\to N\to O\to M[1]$ in $\dmgeq$ we have 
\begin{equation}\label{eadd}
Card_q(N)=Card_q(M)+ Card_q(O)
\end{equation}
 and for any $X\in \var$ and $M=\mgcq(X)$ we have $Card_q(M)=\#X(\fq)$ (the number of $\fq$-points of $X$).

Moreover, for any $M\in \obj \dmgeq\lan 1\ra$ the number $Card_q(M)$ is divisible by $q$ in $A$.

2. Assume  that $X$ is a proper $k$-variety; take  the morphism  $h:M=\mgq(X)=\mgcq(X)\to \q=\mgcq(\pt)$ corresponding to the projection $X\to \spe k$ (see Proposition \ref{pmgc}) and set $\tm=\co(h)$. Then $Card_q(X)\equiv 1\mod q$ whenever either of the following equivalent conditions is fulfilled:

(i) $\tm\in \obj \dmgeq\lan 1\ra$;

(ii) $\chw_0^i(\tm_K,\q)=\ns$ (see Definition \ref{dcwh}) for 
 all $i\in \z$ and a  universal domain $K$  containing $k$;

(iii) $\chw_0^0(M_K,\q)=\q$ and $\chw_0^i(M_K,\q)=\ns$ for all $i\neq 0$.

\end{pr}
\begin{proof}
1. We use the \'etale cohomology functor $\hetl=\hetl(-_{\ff})$ constructed in Proposition \ref{pgs}(1), where $\ff$ is the algebraic closure of $\fq$. Let us recall that for any $X\in \var$ and $i\in \z$ the $\ql$-vector spaces  $H^i_{et,\ql}(X_{\ff})$ are well-known to be finite-dimensional and almost all of them (when $i$ varies) are zero; 
hence the same is true for the corresponding cohomology of Chow motives.  Since the subcategory  $\choweq$ densely generates $\dmgeq$, we obtain that these finiteness properties extend to $\{\hetl^i(M_{\ff}),\ i\in \z\}$ for any $M\in \obj \dmgeq$ as well.

We  will write $Frob_q:x\mapsto x^q$ for the (arithmetic) Frobenius automorphism of $\ff$.
Our candidate for $Card_q(M)$ will be the trace of the action of the geometric Frobenius automorphism $g=Frob_\q^{-1}\in G$ 
on the (finite dimensional $\ql$-vector space) $\bigoplus_{i\in \z} \hetl^i(M_{\ff})$; a priori we have $Card_q(M)\in \ql$.
Since $H$ is a cohomological functor, it converts distinguished triangles into long exact sequences; this obviously implies the property (\ref{eadd}).

Now we study the values of $Card_q$.  Theorem 5.2.2 of \cite{delkatz} says that the eigenvalues of the action of $g$ on $H^{i}_{c,et}(X_{\ff})$ are integral algebraic numbers (i.e., belong to $A$) for any $X\in \var$ and $i\in \z$. Hence these properties are also fulfilled for $H^{i}_{et}(M_{\ff})$ for any $M\in \obj \choweq$; thus they are valid for any $M\in \obj \dmgeq$ as well. To conclude the proof it obviously suffices to note that for any $M\in \obj \dmgeq$ we have  $Card_q(M\lan 1 \ra)=q Card_q(M)$ (once again, it suffices to verify this equality for $M\in \obj \choweq$ only). 

2. The previous assertion implies that $1-\#X(\fq)= Card_q(\tm)$. Moreover, if condition (i) is fulfilled then this (integral!) number is divisible by $q$. 
Next, conditions (ii) and (iii) are obviously equivalent. It remains to note that condition (i) is equivalent to condition (ii) according to 
Theorem \ref{tmain}(1). 
\end{proof}

\begin{rema}\label{iesn} 
1. Recall that in (Theorem 1.1 of) \cite{esntrivch0}  essentially a particular case of 
 Proposition \ref{pesn}(2) was established (actually, $K$  equal to the algebraic closure of $k(X)$ instead of being a universal domain was considered; yet one can easily look at our proofs and note  that  this is a minor distinction that does not affect any applications; cf. Proposition \ref{ptestfi}(1) below). $X$ was assumed to be smooth projective; hence $\chw_{j}^{i}(M_K,\q)=\ns$   for $i\neq 0$ and $\chw_{0}^{0}(M_K,\q)\cong \chowm_0(M_K,\q)\cong \chow_0(X_K)$. 
 Next the corresponding statement was applied to smooth rationally chain connected varieties (in particular, to Fano ones; see Remark \ref{rtlec}(\ref{itlec3})). 

Certainly, our proposition (and actually the whole paper) says nothing new on this number on points matter when restricted to the case where $X$ is (proper and)  smooth.

However (as demonstrated by J. Koll\'ar's  example in 
  \cite[\S3.3]{esnblick}) the situation becomes more complicated if $X$ is allowed to be singular. Consequently, we suggest to look at the 
	negative degree Chow-weight homology of $M$ (or $\tm$)  in the case where $X$ is a non-smooth rationally chain connected variety. 


2. More generally, if $k$ is a subfield of $\fq$ and $g: X\to Y$ is a proper morphism then for $\tm'=\co (\mgcq(g))$ we certainly have the following: if $\tm'\in\obj \dmger\lan r\ra$ for some $r>0$ then $\#X(\fq)\equiv \#Y(\fq)\mod q^r$. 
 Thus it does make sense to consider (also, higher-dimensional) Chow-weight homology of motives $\tm'$ of this sort. 

Recall also that in the case where $g$ is a dominant morphism of smooth proper varieties (consequently, Chow-weight homology of $\mgcq(X)$ and $\mgcq(Y)$ vanishes in non-zero degrees once again) and $r=1$ this statement essentially coincides with Corollary 1.3 of \cite{fak}. However, one can certainly "multiply" any example of this sort by an arbitrary $k$-variety $V$. Then  
 clearly  $\tm'\times \mgcq(V)\in \obj \dmger\lan 1\ra$ and $\#X\times V(\fq)\equiv \#Y\times V(\fq)\mod q$; yet one cannot deduce these facts from the properties of Chow groups of $X\times V$ and $Y\times V$ directly (unless $V$ is smooth and proper).

3. We could have based our proof on Theorem 8.1 of \cite{kahnzeta} (cf. also Theorem 9.1 of ibid.); then we would obtain that all the values of our   function $Card_q$ are actually integral. 
\end{rema}

\subsection {On the support of Chow groups of  proper smooth varieties}\label{supp} 

Now we 
study in detail the case where $X$ is proper and smooth in the setting of Proposition \ref{psingul}(I). The point is that in this case the endomorphisms of $\mgcr(X)$ can be expressed in terms of algebraic cycles on $X\times X$; consequently, we are able to prove certain (partially new) statements that are formulated in this language.

\begin{pr}\label{psingle}
Let $r>0$; assume that $K$ is a universal domain containing $k$.

 Let $g:Y\to X$ be a  morphism of smooth proper $k$-varieties, $Z=\imm g$, $U=X\setminus Z$ (cf. Proposition \ref{psingul}), and denote $\mgcq(g)$ by $h$. 

Then the following conditions are equivalent. 

\begin{enumerate}

\item\label{is2} $\chow_{j}(U_K,\q)=\ns$ for $0\le j<r$.

\item\label{is3} The equivalent conditions of Corollary \ref{ccones} are fulfilled for the morphism $\mgq(Y)  \stackrel{h}{\to} \mgq(X)$ 
 of Chow motives, $c_1=0$, and $c_2=r$.

\item\label{is4} The diagonal of $X\times X$ (considered as a cycle on it) is rationally equivalent to the sum of a cycle supported on $Z\times X$ and a cycle supported on $X\times X'$, where $X'\subset X$ is a closed subvariety of codimension $r$. 

\end{enumerate}
\end{pr}
\begin{proof}
According to Proposition \ref{psingul}(I), condition \ref{is2} is equivalent to the surjectivity of the  homomorphisms $\chow_j(g_K,\q)$ for $0\le j<r$, i.e., to condition 1 of Corollary \ref{ccones}; thus  conditions \ref{is2} and \ref{is3} are equivalent.  

 Next, the easy arguments described in Remark \ref{rcones}(1) immediately yield that condition \ref{is3} is equivalent to \ref{is4}. \end{proof}

\begin{rema}\label{rparan}
1. Recall that  for any closed subvariety $Z$ of $X$ 
 there exists some proper $g:Y\to X$ such that  $Y$ is smooth and $\imm g=Z$ according to the seminal result of de Jong (cf. the stronger Gabber's 
 Corollary 2.1.15 of \cite{kellyast}). 
Note also that here we can 
 choose $Y$ whose dimension equals that of $Z$.

2. Now we demonstrate that our proposition implies Proposition 6.1 of \cite{paranconn}.

So, for a smooth projective $k$-variety  $X$, closed subvarieties $V_j$ of $X$ for $0\le j<r$, and $K$ as above  we assume that $\chow_{j}((X\setminus V_j)_K,\q)=\ns$ for $0\le j<r$. Then we can take $Z=\cup_{0\le j<r}V_j$ and apply Proposition \ref{psingle}; hence condition \ref{is4} says that the diagonal in $X\times X$ is rationally equivalent to the sum of a cycle supported on $Z\times X$ and a cycle supported on $X\times X'$, where $X'$ is of codimension $r$ in $X$. Decomposing the first of these cycles into the sum of cycles supported on $V_j\times X$ (for  $0\le j<r$) we obtain loc. cit.

3. Certainly, the authors would like to suggest the readers to study the negative degree Chow-weight homology of $C=\mgcq(U)$ as well  (note that computations of this sort are closely related to cohomology; cf. Propositions \ref{phomol} and \ref{pconj} and Theorem \ref{tlec}). Obviously,  one can  argue similarly to Corollary \ref{ccones} and Remark \ref{rcones}(1) to obtain certain equivalent conditions in terms of algebraic cycles provided that the weight complex $t=t(C)$ or (equivalently) 
$t'=t(\mgcq(Z))$ is known.

Thus it makes sense to  recall 
that  $t$ can be expressed in the (more or less) obvious way in terms of an arbitrary proper hypercover of $Z$ (here one can apply the $h$-topological $\q$-linear version of 
  \cite[Theorem 4.0.7]{kellyast}  noting that the arguments in the proof of loc. cit. give this modification without any  difficulty); cf. also Remark \ref{rwc}(2). 
 In particular, if 
$\{Z_i\}$ are irreducible components of $Z$ and (all $Z_i$ and) the intersections of all subsets of $\{Z_i\}$ are smooth then one can take the $-n$-th term of $t$ to be equal to $\bigoplus_{J\subset I,\ \#J=n}\mgq(\cap_{i\in J} Z_i)$ and the boundary morphisms to be the obvious ones; cf. Proposition 6.5.1 of \cite{mymot}. 

Recall also that any smooth $U$ can be presented in this form (i.e., as $X'\setminus (\cup Z'_i)$ for some smooth proper $X'$ and a normal crossing divisor $\cup Z'_i$) if $p=0$.

4. Now let us discuss the $R$-linear version of these weight complex calculations (for $R$ that is a $\zop$-algebra; it clearly suffices to consider the case $R=\zop$ only).

Firstly, one can certainly assume that $Y$ is equidimensional in Proposition \ref{pirsir}(3). Thus the corresponding morphism $h$ actually comes from an algebraic cycle in $Y\times Z$ (see the $R$-linear version of Proposition \ref{pmgc}(\ref{imchow}) given by Proposition \ref{pirsir}(1)). 
 However, this does not make $Y$ and $h$ explicit.

	Still one can also take $h$ that comes from an actual morphism $g:Y\to Z$. We will sketch the proof of this statement here; we will also describe $g$ more or less explicitly in the process.
	
	Firstly, assume that there exists a sequence of 
	 morphisms $
	W_0\to W_1\to\dots \to W_n=Z$ (for some $n\ge 0$) such that for $1\le i\le n$ the variety $W_{i-1}$ is the blow-up of $W_i$ in a smooth centre $T_i$, and $W_0$ is smooth. 
	Then the variety $Y=W_0\sqcup(\sqcup_{1\le i\le n}T_i)$ is proper and smooth as well, and for any field extension $K/k$ any point of the variety $Z_K$ obviously lifts to $Y_K$ (cf. the proof of Lemma \ref{lwdc}(4)). 
Thus $h=\mgr(g)$, where $g$ is the corresponding morphism $g:Y\to Z$, gives a weight decomposition of $\mgcr(Z)$.
 Now recall that Hironaka's resolution of singularities results yield that such a sequence of blow-ups exists for any proper $k$-variety $Z$ if $p=0$. 

Now let us discuss the case $p>0$. The morphism $g:Y\to Z$ as in our construction is a proper {\it cdh-covering} of $Z$ (in the sense of  
Definition 4.1.9), 
 and it is easily seen to be sufficient to assume that $g$ is a proper  cdh-covering with smooth domain to have the aforementioned "lifting property" for points of $Z$. Moreover, if $R$ is a $\z_{(\ell)}$-algebra for a prime $\ell\neq p$ then it suffices to assume that $g$ is an {\it ldh-covering} (see Definition 2.1.11 of \cite{kellyast}) instead of a cdh-one; recall that Theorem 3.2.12 of ibid. (established by Gabber) says that for any $Z\in \var$ 
 there exists a quasi-projective $k$-variety $Y$ and a proper ldh-covering morphism $g:Y\to Z$. Clearly, $Y$ is actually projective in our case, and its dimension equals that of $Y$ by the definition of ldh-coverings.

Finally, for a general   (commutative  unital $\zop$-algebra) $R$ one can choose a finite set of primes $L\subset \p\setminus {p}$ along with an ldh-covering morphisms $Y_{\ell}\to Z$ for each $\ell\in L$ as above (with $Y_{\ell}$ being smooth). Indeed, it suffices to verify this statement in the case $R=\zop$, and then one can apply Corollary 0.2 of \cite{bsnull} 
 (cf. also Appendix A.2 of \cite{kellyth}). \end{rema}

Now we 
 want to discuss certain conditions that are equivalent to (combinations of) collections of support assumptions (motivated by Theorem 1.7 of \cite{later}). Our methods allow us to study the case of a general $R$ here (in contrast to 
  ibid.); 
however, in this case we need the following substitute of Proposition \ref{psingle}.

\begin{lem}\label{lparan}
 Assume that $X$ is smooth and proper, and 
for a closed subvariety $Z$ 
of $X$ 
 and $U=X\setminus Z$ the groups $\chow_j(U_K,R)$ vanish for  $0\le j<r$ (for some $r>0$) and all function fields $K/k$. 

 Then $\mgr(X)$ is a retract of  $\mgr(Y)\bigoplus \mgr(Q)\lan r \ra$ for some $Y,Q\in \spv$ with $\dim Y=\dim Z$.

\end{lem}
\begin{proof}
According to Proposition \ref{pirsir}(3), there exists a smooth projective $k$-variety $Y$ with $\dim Y=\dim Z$ along with a morphism $h:\mgcr(Y)\to \mgcr(Z)$ such that $\dim Y=\dim Z$ and $h$ gives a weight decomposition of $\mgcr(Z)$; hence the homomorphisms $\chow_j(h_K)$ are surjective for all function fields $K/k$ and 
 $j\ge 0$ (see Proposition \ref{pirsir}(2)). Next, 
 the long exact sequence for $\chow_j(-_K)$-groups  coming from the $R$-linear version of the distinguished triangle (\ref{eimctr}) (given by Proposition \ref{pirsir}(2)) 
  yields that $\chow_j(Z_K)$ surjects onto $\chow_j(X_K)$ for all function fields $K/k$ and $0\le j<r$. Thus the composed morphism $h': \mgcr(Y)\to \mgcr(X)$ gives a surjection of the corresponding Chow groups as well. Applying Corollary \ref{ccones} for $c_1=0$ and $c_2=r$ we obtain conclude that the morphism $\id_h$ factors through $\mgr(Y)\bigoplus \mgr(Q)\lan r \ra$ for some $Q\in \spv$ (cf. Remark \ref{rdd}).
\end{proof}

\begin{pr}\label{plater}

Let 
$X$ be a smooth proper variety, 
 $r\ge 0$, and $c>0$. 

Then 
 the following conditions are equivalent.

\begin{enumerate}
\item\label{ilatr} The motif $M=\mgr(X)$ is a retract of a Chow motif of the form $\bigoplus_{0\le j\le c}\mgr(P_j)\lan j \ra$, where $P_j\in \spv$ for all $j$ and $\dim P_j\le r$ for $j<c$. 

\item\label{ilats} There exist closed subvarieties $V_j\subset X$ for $0\le j<c$ such that for all $j$ we have $\dim V_j\le j+r$ and $\chow_j((X\setminus V_j)_K,R)=\ns$ (i.e., the group $\chow_j(X_K,R)$ is "supported on" $V_{j,K}$) for all  field extensions $K/k$.

\item\label{ilatc} The diagonal $\Delta$ of $X\times X$ (considered as an algebraic cycle on it) is rationally equivalent to the sum $\sum_{j=0}^c\Delta_j$, 
 where the cycle $\Delta_j$ is supported on $W_j\times V_j$ for $j<c$ and on $W_c\times X$ for $j=c$ and $V_j$ (for $0\le j<c$) are closed subvarieties of $X$ of dimension at most $j+r$ and $W_j$ (for $0\le j \le c$) are closed subvarieties of $X$ of codimension at least $j$.
\end{enumerate}

Moreover, if $R=\q$ then 
 one can take a single universal domain $K$ containing $k$ in condition \ref{ilats} instead.
\end{pr}
\begin{proof}

Once again, Proposition \ref{ptestf}(II) 
implies that in the case $R=\q$ condition \ref{ilats} is equivalent to 
 its $K_0$-linear version, where $K_0$ is a universal domain containing $k$.

Thus it suffices to prove the main part of the statement. We fix   some $X$, $r$, and $c$ as above,  and recall that $M=\mgr(X)$ is a Chow motif itself according to  the $R$-linear version Lemma \ref{lwdc}(1) (given by Proposition \ref{pirsir}(1)).

First we prove 
that condition \ref{ilatr} implies 
 \ref{ilats}.
Assume that   condition \ref{ilatr} is fulfilled; we will check the support condition for certain $j=j_0,\ 0\le j_0<c$.
Denote by $p$ the corresponding split surjective morphism   $p: \bigoplus_{0\le j\le c}\mgr(P_j)\lan j \ra\to M$; $p_K$ clearly gives a surjection of the $\chowm_{j_0}$-groups.
Moreover,  $\chowm_{j_0}(\mgr(P_{jK})\lan j \ra,\q)=\ns$ whenever $j>j_0$; hence for 
$N_{j_0}=\bigoplus_{0\le j\le j_0}\mgr(P_j)\lan j \ra$ the corresponding retract $p_{j_0}$ of $p$ is converted by the functor $\chowm_{j_0}(-_K,R)$ into a surjection as well.

Now we choose a presentation of  $p_{j_0}$ as an algebraic cycle on $Q_{j_0}= (\sqcup_{0\le j\le j_0} P_j)\times X$; this cycle is supported on a subvariety $R_{j_0}$ of $Q_{j_0}$ of dimension at most $r+j_0$. Then the definition of the 
   action of correspondences on cycles implies that $\chow_{j_0}(X_K)$ is supported on the image of $R_{j_0,K}$ in $X_K$ (with respect to the projection $Q_{j_0,K}\to X_K$). Since the latter has dimension not greater than  that of $R_{j_0}$ (and comes by base change from the corresponding $k$-variety), we obtain the implication in question. 
 
Next we prove that  condition \ref{ilatc} implies condition \ref{ilats} by an argument rather similar to the one that we have just used.
We fix $j_0$, $0\le j_0<c$, and find a support $k$-variety for $\chow_{j_0}(X_K)$ (for all $K$). Arguing similarly to the proof of Proposition \ref{pcrulemma}(\ref{iсru2}) we easily obtain that for any $j>j_0$ the endomorphism $h_j$ of $M$  corresponding to the cycle $\Delta_j$ factors through  $\chower \lan j \ra$; hence its action on the group $\chow_{j_0}(X_K)$ is zero. Therefore it suffices to 
 note that for  $0\le j\le j_0$ the elements of $h_{j*}(\chow_{j_0}(X_K))$  
are supported on $V_{j,K}$ (by the classical theory of correspondences), 
and the dimensions of these $V_j$ are at most $j_0+r$.

Now we prove that condition \ref{ilats} implies condition \ref{ilatr}.
 Assume that  condition \ref{ilats} is fulfilled (for our $X$, $r$, and $c$).
Then Lemma \ref{lparan} implies that for each $j,\ 0\le j<c$, the morphism $\id_M$ may be factored through $\mgr(Y_j)\bigoplus \mgr(Q_j)\lan j+1 \ra$ for some 
$Y_j, Q_j\in \spv$ such that $\dim Y_j\le j+r$ (for all $j$).  We "compose these factorizations" starting from the last one, i.e., we factor $\id_M$ through the chain of objects   $M\to \mgr(Y_{c-1})\bigoplus \mgr(Q_{c-1})\lan c \ra\to \mgr(Y_{c-2})\bigoplus \mgr(Q_{c-2})\lan c-1 \ra\to\dots \mgr(Y_{0})\bigoplus \mgr(Q_{0})\lan 1 \ra\to M$. This gives a decomposition of $\id_M$ into $2^c$ summands $e_l$ such that each of these endomorphisms factors either through $\mgr(Y_{c-i})\bigoplus \mgr(Q_{c-i})\lan c-i+1 \ra$ at the "$i$th step". It obviously suffices to verify that each of 
 $e_l$ factors through certain $\mgr(P)\lan j \ra$ such that $P\in \spv$ and either $j=c$ or $0\le j<c$ and $\dim P_j\le r$. Now we choose one of these 
 $e_l$ and consider the smallest $i$ such that $e_l$ factors through  $\mgr(Q_{c-i})\lan c-i+1 \ra$. If there is no such $i$ then $e_l$ factors through  $\mgr(Y_{0})$; thus we can take $j=0$ and $P=Y_0$. If this minimal $i$ equals $1$ then we can take $j=c$ and $P=Q_c$. In other cases the morphism $e_l$ factors firstly through $\mgr(Y_{c-i+1})$ and  through $\mgr(Q_{c-i})\lan c-i+1 \ra$ after that; thus Proposition \ref{pcrulemma}(\ref{iсru2}) implies that $e_l$ factors through $\mgr(P)\lan c-i+1 \ra$ for some $P$ of dimension at most $\dim Y_{c-i+1} - (c-i+1)\le r$.

Lastly we prove that condition \ref{ilatr} implies condition \ref{ilatc}. 
It clearly suffices to verify for $0\le j\le c$ that an endomorphism $h_j$ of $M$ that factors through $\mgr(P_j)\lan j \ra$, where $P_j\in \spv$ and $\dim P_j\le r$ if $j<c$, can be presented 
 by a cycle $\Delta_j$ that satisfies the support assumptions of condition \ref{ilatc}. Consequently, we present $h_j$ as a composition $M\stackrel{a}{\to} \mgr(P_j)\lan j \ra \stackrel{b}{\to} M $. Now, Proposition \ref{pcrulemma}(\ref{iсru2}) gives the existence of an open embedding $w:W'\to P$ 
 such that $W_j=P\setminus W'$ is of codimension $j$ in $P$ and  $a\circ \mgr(w)=0$. Hence we can choose a presentation of $a$ as an algebraic cycle supported on $W_j$. Next 
(similarly to the proof (\ref{ilatr})$\implies$(\ref{ilats})), we consider  the support variety $R_j$ 
for some cycle in $P_j\times P$ that represents $b$, and take $V_j$ to be the image of $R_j$ in $P$. Obviously, $V_j$ is of dimension at most $j+r$ if $j<c$. It remains to note that the composition $b\circ a=h_j$ is clearly supported on $W_j\times V_j$ as an algebraic cycle. \end{proof}

\begin{rema}\label{rlat} 
\begin{enumerate}
\item\label{irlatc}
In the case $k=K$ and $R=\q$ 
 our conditions \ref{ilatc} and \ref{ilats} are precisely  conditions (i) and (ii) of  \cite[Theorem 1.7]{later}.

\item\label{irlatind} Now let us discuss possible variations of  the argument that we used to deduce condition \ref{ilatr} from condition \ref{ilats}.

One can certainly re-formulate it inductively to obtain the following: condition \ref{ilatr} is fulfilled if and only if $M$ is a retract both of a motif of the form $\bigoplus_{0\le j\le c-1}\mgr(P'_j)\lan j \ra$, where $P'_j\in \spv$ for all $j$ and $\dim P'_j\le r$ for $j<c-1$, and also of  $\mgr(Y_{c-1})\bigoplus \mgr(Q_{c-1})\lan c \ra$ for some $Y_{c-1}, Q_{c-1}\in \spv$ such that $\dim Y_{c-1}\le c+r-1$ (see Lemma \ref{lparan}). 


Now we 
 pass to a "triangulated" version of the equivalence of these conditions. The proof of this result is also somewhat similar to the aforementioned part of the proof of Proposition \ref{plater}. 

\end{enumerate}
\end{rema}


\begin{pr}\label{ptrianglater}
Let $M\in \obj \dmger$, $r\ge 0$, and $c>0$.

Then the following conditions are equivalent.

\begin{enumerate}
\item\label{itri1} $M$ is an object of  the subcategory $\du_{r,c}$ of $\dmger$ densely generated by $\obj \chower\lan c \ra\cup (\cup_{0\le j<c}\obj(d_{\le r} \chower)  \lan j \ra) $.

\item\label{itri2} $M$ is an object both of $\du_{r,c-1}$ and of  the category $\eu_{r,c}=\lan \obj \chower\lan c \ra\cup \obj(d_{\le r+c-1} \chower) \ra$.

\item\label{itri3} $M$ is an object of $\eu_{r,j}$ for all  $0< j\le c$. 

\end{enumerate}

\end{pr}
\begin{proof}
Obviously, condition \ref{itri1}  implies condition \ref{itri2}, and the latter implies condition \ref{itri3}. Moreover, obvious induction (cf. Remark \ref{rlat}(\ref{irlatind})) implies that it suffices to verify that condition \ref{itri2} implies condition \ref{itri1} for all $c>0$ (whereas we can assume $r$ to be fixed). 

So we assume that condition \ref{itri2} is fulfilled. 
Similarly to Corollary \ref{cchows}(1,3), 
Proposition \ref{pbw}(\ref{igen}) 
  implies that  that the Chow weight structure on $\dmger$ restricts to $\du_{r,j}$ and $\eu_{r,j}$ for any $j\ge 0$; the corresponding hearts $\hd_{r,j}$ and $\he_{r,j}$ are the Karoubi-closures in $\chower$ of $\obj \chower \lan j \ra\bigoplus  (\bigoplus_{0\le l<j}\obj(d_{\le r} \chower)  \lan l \ra) $ and of $\obj \chower\lan j \ra\bigoplus \obj(d_{\le r+j-1} \chower)$, respectively.
	
	Now, Proposition \ref{pcrulemma}(\ref{iсru2}) easily implies that any morphism from  $\he_{r,c}$ into $\hd_{r,c-1}$ factors through $\hd_{r,c}$ (cf. the proof that condition \ref{ilats} implies \ref{ilatr} in Proposition \ref{plater}). Thus applying Proposition 1.9 of \cite{binters} (cf. also Remark 2.3(2) of ibid.) we obtain the result in question.    \end{proof}

\begin{rema}\label{rcoefflater}

1.  The authors do not know of any "nice" if and only if criteria for $M\in \obj\dmger$ to be an object of the subcategory $\eu_{r,j}\subset \dmger$ (see the previous proposition). However, 
$M$ is clearly an object of $ \eu_{r,j}$ whenever  it is an extension of an object of $M_1$ of $d_{\le r+j-1}\dmger$ by an object $M_2$ of $\dmger\lan j \ra$. Moreover, we can check whether $M_2$ is an object of $\dmger\lan j \ra$ by looking at its Chow-weight homology; see Theorem \ref{tmain}(1).

2. Furthermore, 
 the $R$-linear version of Proposition \ref{pmgc}(\ref{imctr}) (see Proposition \ref{pirsir}(1)) says that the motif $M=\mgcr(X)$ for $X\in \var$ is an extension of $M_2=\mgrc(X\setminus Z)$ by $M_1=\mgrc(Z)$  
   whenever $Z$ is a closed subvariety of $X$. Now, $M_1$ is an object of $d_{\le r+j-1}\dmger$ if $Z$ is of dimension at most $r+j-1$ (see Lemma \ref{lwdc}(6) and Proposition \ref{pirsir}(1)); thus  to prove that $M$ is an object of the subcategory $\eu_{r,j}$ it suffices to suppose in addition that $\chw_{r}^{i}(M_{2,K})=\ns$  for all $i\in \z$, $0\le r< j$, and all function fields $K/k$.

Note also that one can check whether a motif $M_1$ 
 is an object of $d_{\le r+j-1}\dmger$ by looking at its {\it Chow-weight cohomology}; see Proposition \ref{pdmain} below.


 4. Clearly, all the "motivic" conditions of this subsection (see condition \ref{is3} in Proposition \ref{psingle}, condition \ref{ilatr} in Proposition \ref{plater}, and Proposition \ref{ptrianglater}(\ref{itri1})) easily imply certain properties for (co)homology of $M$; cf. Proposition \ref{phomol}. 
\end{rema}


\section{Supplements: on small Chow-weight homology, Chow-weight cohomology, and the relation to motivic spectra}\label{ssupl}


In this section we deduce some more implications from the previous results. 

In \S\ref{small} we consider ($\q$-linear) motives whose Chow-weight homology groups 
 (in a "staircase range" $\ii$) are finite dimensional (over $\q$). We obtain a generalization of Theorem \ref{tstairs} in the case where $R=\q$ and $k$ is a universal domain; one may say that a 
 motif $M$ satisfies these finite dimensionality conditions if and only if it satisfies the corresponding "weight-effectivity" conditions "modulo Tate motives". We also define cycle classes for Chow-weight homology and relate them to this question.
In particular, we obtain that if the lower Chow groups of a variety $X$ are finite dimensional (over $\q)$ then the corresponding weight factors of the (singular or \'etale) cohomology of $X$ with compact support are Tate ones (cf. Theorem \ref{tlec}).

In \S\ref{iessdim} we dualize 
Theorem \ref{tmain}; this allows to bound the dimensions of motives and also their weights  (from above) via calculating their {\it Chow-weight cohomology}. 
 We also note that to verify  the vanishing  of Chow-weight homology of $M$ (in higher degrees) over arbitrary extensions of $k$ it suffices to compute these groups over (rational) extensions of $k$ of bounded transcendence degrees. 
	
	In \S\ref{sht} we recall (from \cite{binfeff} and \cite{bacons}) that the 
effectivity and the connectivity filtrations on motivic complexes are closely related to that on (the corresponding versions of) the motivic homotopy category $\sht$; hence our criteria also give some information on motivic spectra.

In \S\ref{smore} we make some more remarks on our main results. In particular, we
propose (briefly) a "sheaf-theoretic" approach to our results, and discuss their possible extensions to motives over a base and to certain "cobordism-motives".

\subsection{On motives with "small" Chow-weight homology and cycle classes} \label{small} 

We introduce certain notation for Tate motives.

\begin{defi}\label{dept}
We will use the notation  $ECT\subset \obj \chower \subset \obj \dmger$ for the class  $\{R\lan j\ra:\ j\ge 0\}$. 
We will write $EPT\subset \obj \dmger$ for the bigger class $\{R\lan j\ra[i]:\ j\ge 0,\ i\in \z\}$. 

\end{defi}

Throughout this subsection we will assume that $R=\q$.

\begin{pr}\label{psmall}

Assume that  $k$ is a universal domain, $\ii$ is a staircase set (see Definition \ref{dreasi}).

1. Then for  $M\in \obj  \dmgeq$ the groups $\chw^i_j(M,\q) $ 
 are finite-dimensional $\q$-vector spaces for all $(i,j)\in \ii$
if and only if $M$ belongs to the extension-closure of $\cup_{i\in \z }(\obj \choweq[-i]\lan a_{\ii,i}\ra)\cup EPT$. 

2. Moreover, for $M\in \obj \dmgeq_{\wchow\ge 0}$ and any $c>0$ the groups $\chowm_j(M,\q)$ are finite-dimensional $\q$-vector spaces for all $j$, $0\le j<c$, if and only if there exists a choice of $\wchow_{\le 0}M$ that belongs to $ECT\bigoplus \obj \chower \lan c \ra$.
\end{pr}
\begin{proof}
1. Recall that for any $i\in \z$ and any element of $\obj\choweq[-i]\lan a_{\ii,i}\ra$ we have  $\chw^i_j(M,\q) =\ns$    for all $(i,j)\in \ii$ (see Theorem \ref{tstairs}(3)), whereas the only non-zero Chow-weight homology group of the Tate 
 motif $T=\q\lan j \ra [-i]$ is  $\chw^i_j(T,\q) =\q$. Since Chow-weight homology functors are homological, we obtain that any element of the extension-closure 
 in question does have finite-dimensional  $\chw^i_j$-homology for $(i,j)\in \ii$.

Now we verify the converse implication. 
Clearly,  the number of non-zero Chow-weight homology groups of $M$ is  finite, 
 and  a non-zero element of $\chw^i_j(M)$ gives a morphism $\q\lan j \ra [-i]\to t(M)$.\footnote{This is 
why we want $k$ to be a universal domain itself.}\
 Thus there exists a $K^b(\choweq)$-morphism $\bigoplus_l  \q\lan j_l \ra [-i_l]\to t(M)$
(for some $i_l\in \z,\ j_l\ge 0$) such that for its cone $C$ we have $\chw^i_j(C) =\ns$  for all $(i,j)\in \ii$. Applying the $K^b(\choweq)$-version of  Theorem \ref{tstairs}(3) (see Remark \ref{rcomplexes}(1)) we obtain that $C$ belongs to the $K^b(\choweq)$-extension-closure of  
   $\cup_{i\in \z }(\obj \choweq[-i]\lan a_{\ii,i}\ra)$. It remains to apply 
	Proposition \ref{pbwcomp}(\ref{iwext})  
  to conclude the proof.

 2. We argue similarly to Corollary \ref{cefflec}(I). According to Proposition \ref{pcwh}(7), we have $\chowm_j(M,\q)\cong \chw^0_j(M,\q)$, and we clearly have $\chw^i_j(M,\q)=\ns$ for any $(i,j)\in [1,+\infty)\times [0,+\infty)$. Thus if the groups   $\chowm_j(M,\q)$ are finite-dimensional $\q$-vector spaces for $j<c$ then the spaces $\chw^i_j(M,\q)$ satisfy this property whenever $(i,j)\in \ii_0^{\lan c \ra}$ (see Definition \ref{defflec}). Applying 
 assertion 1 we obtain that $M$ belongs to the extension-closure of $\cup_{i>0 }(\obj \choweq[i])\cup  EPT\cup \obj \chower\lan c \ra$. Applying Proposition \ref{pbwcomp}(\ref{iwcex}) we obtain the existence of a weight complex $t(M)$ of $M$ such that $M^0\in ECT\bigoplus \obj \chower \lan c \ra$. Combining this statement with part \ref{iwc3}  of that proposition one easily obtains the existence of $\wchow_{\le 0}M$ that is a retract of an element of $ECT\bigoplus \obj \chower \lan c \ra$ (alternatively, one may combine Lemma 1.5.4 of \cite{bws} with Proposition \ref{pbw}(\ref{ifactp}) to obtain this statement). Lastly, one can obviously "modify" the corresponding weight decomposition of $M$ to obtain a choice of $\wchow_{\le 0}M$ that is an element of $ECT\bigoplus \obj \chower \lan c \ra$.

The converse implication is easier. Since Chow-weight homology functors are homological and   $\chw^i_j(N,\q)=\ns$ for any $i,j\ge 0$ whenever $N\in \dmgeq_{\wchow\ge 1}$, we obtain that there exists a choice of   $\wchow_{\le 0}M$ that belongs to $ ECT\bigoplus \obj \chower \lan c \ra$, and the vector spaces $\chw^0_j(M,\q)$ are finite-dimensional whenever $0\le j< c$. 
 It remains to apply Proposition \ref{pcwh}(7) (once again) to replace $\chw^0_j(M,\q)$ with $\chowm_j(M,\q)$ in the latter statement.
 \end{proof}

This statement easily yields a generalization of Theorem 3.18 of \cite{voibook}.\footnote{In loc. cit. Voisin says that some results stronger than her theorem were obtained in \cite{paranconn} and \cite{later}. However, the authors 
 don't know how to "join" the results of this section with that of \S\ref{supp} (where some of the 
 the results of these papers were recalled and extended).} 

\begin{rema}\label{rcyclass}
1. We conjecture that for any $k$ and a universal domain $K$ containing it the $\q$-vector spaces $\chw^i_j(M,\q) $ are finite-dimensional for $(i,j)\in \ii$ 
if and only if $M$ belongs to the extension-closure of the union of \linebreak 
$\cup_{i\in \z }(\obj \choweq[-i]\lan a_{\ii,i}\ra)$ with the set of {\it Artin-Tate} motives. 

The following observation may be helpful here: the compositions of Chow-weight homology functors with the localization of the category $\q-\vect$ of all $\q$-vector spaces by the Serre subcategory $\q-\vecto$ of finite dimensional  spaces yield well-defined functors on the localization of $\dmgeq$ by the triangulated subcategory generated by (effective) Artin-Tate motives.

On the other hand, we doubt that any "reasonable" analogue of this statement holds in the case where $R$ is not a $\q$-algebra.

2. One can define another notion of  "smallness" of  Chow-weight homology using ("Chow-weight") cycle classes. 

So, let $F^j:\choweq\to \ab$ be an additive functor, and let $\Phi^j$ be a natural transformation $\chowm_j(-_{K},\q)\to F^j$ (say, for a universal domain  $K$). Then $\Phi^j$ obviously extends to a natural transformation 
$\tph^j: \chw^i_j(-,\q) \implies  \tilde{F^j}$ of functors $\dmgeq\to \ab$ 
 defined using Proposition \ref{pbwcomp}(\ref{iwcoh}). Now, for a collection of  $\Phi^j$ of this sort (for $j\ge 0$) one may study the conditions ensuring that the homomorphisms $\tph^j_i(M)$ are injective for all $(i,j)\in \ii$ (in particular, in the case $\ii=\z\times [0,+\infty)$). 

Certainly, the transformations $\Phi^j$ are usually "mutually coherent" in the cases of interest. Below we will take $\Phi^j$ to be cycle classes into \'etale  and singular homology. It would be also interesting to treat cycle classes into the 
 Deligne-Beilinson homology here (for $K=\com$). The corresponding "pure criterion" (for effective Chow motives) 
can be immediately deduced from \cite[Theorem 1.2]{esle}. 



3. Let us introduce some notation for homology (under the assumption that $k$ is a universal domain). We recall that the \'etale cohomology functor $\hetl$  is a cohomological functor  from $\dmgeq $ into $ \ql-\vecto$ 
 (here $l\neq p$, $\ql-\vecto$ is the category of finite dimensional $\ql$-vector spaces,  and the Galois action is trivial since $k$ is algebraically closed; cf. Proposition \ref{pgs}(1));  the singular cohomology functor $\hsing $ is a cohomological functor  from $\dmgeq $ into $\mhs$ (respectively, we fix some embedding of $k$ into $\com$).

Now we define $\hdetl$ and $\hdsing$ as the duals of these functors, i.e., $\hdetl: \dmgeq\to \ql-\vecto$ is the homological functor $N\mapsto 
 \widehat{\hetl(N)}= \ql-\vecto(\hetl(N),\ql)$, and $\hdsing: \dmgeq\to \mhs$ is the homological functor $N\mapsto \widehat{\hsing(N)}$. Moreover, we will write $\hdsingr$ for the composition of the functor $\hdsing$ with the forgetful 
 functor $\mhs\to \q-\vecto$.

Next, the "cohomological" cycle classes give transformations 
$\chowm^j\implies H_?^{2j}(j)$ of contravariant functors from $\choweq$. Here we write $H_?^{*}(N)(j)$ for the 
 vector space (either over $\ql$ or over $\q$) underlying the $j$th Tate twist of either  \'etale or singular cohomology of $N$ (consequently, $H_?^{*}(N)(j)$ is  isomorphic to the corresponding $H_?^{*}(N)$; this isomorphism is canonical for singular cohomology), and the functor $\chowm^j$ is the extension of the functor of codimension $j$ Chow group to Chow motives. Thus applying Poincar\'e duality we obtain  natural transformations of covariant functors   $\chowm_j\mapsto H^?_{-2j}(-)(-j)$ 
from the Chow groups of effective Chow motives into their corresponding (singular or \'etale) homology (note here that our convention for the numeration of homology was introduced in 
 \S\ref{snotata} and it is opposite to the "usual" one). 
 
Next we construct the corresponding transformations $\tph^j_{et,i}$ and $\tph^j_{sing,i}$. Clearly, their targets (when applied to $M\in \obj \dmgeq$) are the $E_2$-terms of Chow-weight spectral sequences converging to $H^?_{*-2j}(M)(-j)$. Since these spectral sequences degenerate at $E_2$ (cf. Remark \ref{rdetect}(\ref{ideg})), these terms are actually isomorphic to the zeroth Deligne weight factors  of $H^?_{*-2j}(M)(-j)$ (note that one can obtain a canonical weight filtration on the \'etale homology of $M$ by taking an essentially finitely generated field of definition $k_0$ for it; see Definition \ref{dhcho}(\ref{idh6})). 
Obviously, these factors are non-canonically isomorphic to the $-2j$-th weight factors of the corresponding $H^?_{i-2j}(M)$.

Lastly we consider the case where $M=\mgcq(X)$ for some $k$-variety $X$ and $i=0$. 
Then applying Proposition \ref{pgs} we obtain that the target of $\tph^j_{0}(M)$ is the dual to the zeroth weight factor of $H^{2j}_{c,?}(X)(j)$. \end{rema}

Let us adopt the assumptions of Remark \ref{rcyclass}(3). 

\begin{pr}\label{psmallcycl}
Assume that $l\neq p$ (resp. $k\subset \com$).

1. Assume that $\ii$ is a staircase set (see Definition \ref{dreasi}) and $M$ 
 is an object of $\dmgeq$. Then the following conditions are equivalent. 

A. The corresponding homomorphisms $\tph^j_{?,i}(M):\chw_j^i(M)\to \grwd_0 H^?_{i-2j}(M) $ (see Remark \ref{rcyclass}(3))  
 are injective whenever $(i,j)\in \ii$ (we treat the \'etale and singular homology functors separately in this condition; thus we actually have two distinct conditions in the case $k\subset \com$). 

B. The kernel of $\tph^j_{?,i}(M)$ is a finite-dimensional $\q$-vector space for any $(i,j)\in \ii$.

C. $M$ belongs to the extension-closure of $\cup_{i\in \z }(\obj \choweq[-i]\lan a_{\ii,i}\ra)\cup EPT$  (cf. Proposition \ref{psmall}).

2. Let $r\ge 0$, $X\in \var$, $M=\mgcq(X)$. Then the following conditions are equivalent.

A. The aforementioned homomorphisms $\tph^0_{?,i}(M)$ 
are injective for $0\le j<r$.

B. The kernels of 
$\tph^j_{?,0}(M)$ are finite-dimensional in this range (i.e., for $0\le j<r$). 

C. There exists a  choice of $\wchow_{\le 0}M$ that belongs to $ECT\bigoplus \obj \chower \lan r \ra$.

3. Moreover, if the equivalent conditions of the previous assertion are fulfilled then for any $m$, $0\le m<r$, 
 we have $\grwd_{2m+1} H_{?,c}^{2m+1}(X)=0$ (for the corresponding dual cohomology theory $H_?^*$; cf. the discussion of the weight filtration for \'etale cohomology in Remark \ref{rcyclass}(3)). 
 Furthermore, if $k\subset \com$, $H_{sing}$ is the singular cohomology with values in $\mhs$, and $0\le m\le r$  
then $\grwd_{2m} H_{c,sing}^{2m}(X)$
is a direct sum of copies of the Tate Hodge structures $\q(-m)$.
\end{pr}
\begin{proof}
1. Clearly,  Condition A implies  Condition B.

Next we 
 prove that  Condition B implies  Condition C. 
 Proposition \ref{pbwcomp}(\ref{iwext})  is easily seen to imply that it  suffices to prove the following: if  Condition B is fulfilled for $M$ and for a fixed $(i,j)\in \ii$ then there exists a choice of $t(M)$ such that $M^s$ belongs to $ ECT\bigoplus \obj\choweq\lan j+1\ra $ for $s>i$ and belongs to $ ECT\bigoplus \obj\choweq\lan j\ra $ for $s=i$ then  $t(M)$ is homotopy equivalent to a complex with the same $M^s$ for $s>i$ and with $M^i$ that belong to $ ECT\bigoplus \obj\choweq\lan j+1\ra $.
Clearly, it suffices to verify the latter implication for $\ii=[i,+\infty)\times [0,j]$.

Now, the corresponding stupid truncation $TM_{\le -i-1} =w_{\operatorname{stupid},\le -i-1}t(M)$ 
 (i.e., the complex $\dots \to 0 \to M^{i+1}\to M^{i+2}\to\dots$; see Remark \ref{rstws}(1))
belongs to 
$\lan \obj\dmgeq \lan j+1 \ra\cup ECT\ra_{K^b(\choweq)}$.
Then for $TM_{\ge -i}$  that is the corresponding choice of $w_{\operatorname{stupid},\ge -i}t(M)$ the obvious $K^b(\choweq)$-version of  Condition B is also fulfilled (since we assume $\ii=[i,+\infty)\times [0,j]$) and $N^i=M^i$.\footnote{Actually, one can easily work  with $\wchow$-truncations in $\dmgeq$ instead of $w_{\operatorname{stupid}}$-ones 
throughout this argument.}Now, 
the image of the corresponding $\tph^j_i(TM_{\ge -i})$ is a quotient of the finite-dimensional $\q$-vector space $\imm (\chowm_j(M^i,\q)\to H^?_{-2j}(M^i)(-j))$ (note that $\chowm_j(M^i,\q)\cong \chowm_0(M^i\lan -j\ra,\q)$ and $F^j(M^i)\cong  F^0(M^i\lan -j\ra)$). Thus $\tph^j_i(TM_{\ge -i})$ is finite-dimensional. Applying the  $K^b(\choweq)$-version of Proposition \ref{psmall} to $TM_{\ge -i}$ we obtain that this complex is homotopy equivalent to a complex
whose $i$th term belongs to  $ECT\bigoplus \obj\choweq\lan j+1\ra $. This obviously implies the statement in question.

Now we prove C $\implies$ A. We fix some  $(i,j)\in \ii$ and choose $t(M)$ so that $M^i=T\bigoplus T'\lan j+1\ra \in ECT \bigoplus \obj \choweq\lan j+1\ra$. Then $\Phi^j(M^i)$ is easily seen to be injective. To prove that $\tph^j_i(M)$ is injective as well it suffices to note that Chow-morphisms $M^{i-1}\to T$ correspond to algebraic cycles (on any variety corresponding to $M^{i-1}$), and homologically non-trivial cycles are not rationally trivial. 

Lastly,  
 the ($\q$-linear) singular homology version of Condition A is obviously  equivalent to its $\ql$-linear modification, and 
the latter injectivity condition is clearly equivalent to its  \'etale version. 

2. We recall that $M\in \dmgeq_{\wchow}\ge 0$ (see Lemma \ref{lwdc}(1)). Thus it remains to combine our assertion 1 with Proposition \ref{psmall}.

3. According to Proposition \ref{pgs}, it suffices to prove the corresponding statements for (singular or \'etale) cohomology of $M$. Now, combining Proposition \ref{pwss}(2) with Remark \ref{rdetect}(\ref{ideg})  
we immediately obtain that $\grwd_{n} H_?^{n}(M)= (\grwc^0 H_?^{n})(M)$ is a subobject of $H_?^n(\wchow_{\le 0}M)$ for any choice of the latter weight truncation (recall that $\wchow_{\le 0}M\in \obj \choweq$ according to Proposition \ref{pbw}(\ref{iwd0})). Thus it suffices to combine the previous assertion with the well-known properties of singular and \'etale cohomology of Chow motives.
\end{proof}

\begin{rema}\label{rsmall}

Adopt the assumptions of part 3 of our proposition.

1. If $M$ (and $X$) are defined over  a (perfect) subfield $k_0$ of $k$ then one can certainly consider \'etale cohomology of $M$ with values in $\ql[\gal(k_0)]-\modc$ (see Proposition \ref{pgs}(1)); thus one can ask whether  the corresponding object $(\grwc^0 H_{et}^{2m})(M_k)$ is a Tate one. The answer to this question would obviously be positive if the weight decomposition triangle $\wchow_{\le 0}M\to M\to \wchow_{\ge 1}M$ given by condition 2.C in our proposition  is defined over $k_0$ and the corresponding "lift" of $\wchow_{\le 0}M$ belongs to the $k_0$-version of $ECT\bigoplus \obj \chower \lan r \ra$. Now, if we fix $X$ then we can choose an essentially finitely generated subfield $k_0$ of $k$ that satisfies these conditions; 
this is the consequence of the so-called continuity of the $2$-functor $F\mapsto \dmgeq(F)$ from fields into triangulated categories (see Example 2.6(2) of \cite{cdint} and \S1.3 of \cite{binfeff}; cf. Proposition \ref{pvan}(4)).


2. Note that the targets of the homomorphisms $\tph^j_{?,i}(M)$ are certain Borel-Moore homology groups of $X$; cf. Lemma-Definition 6.25 of \cite{petershodge}.

3. Probably some converse to part 3 of our proposition (if one assumes certain motivic conjectures) can be proved rather easily; cf. Proposition \ref{pconj}.
\end{rema}

\subsection{Chow-weight cohomology and the dimension of motives} 
\label{iessdim}

Now we dualize (parts 1 and 3 
of) Theorem \ref{tmain} along  with some other properties of Chow-weight homology.

To this end we note that Proposition \ref{pwchow}(\ref{ip1}) yields the following: the Poincar\'e duality for $\dmgmr$ "respects" $\wchow$, i.e., the image under the duality functor   of $\dmgr_{\wchow\le 0}$ is $\dmgr_{\wchow\ge 0}$ (and also vice versa). Moreover, the categorical duality (cf. Proposition \ref{pbw}) essentially respects weight complexes (at least, for motives; this is explained in  detail in Remark 1.5.9(1) of \cite{bws}). Thus one easily obtains the following results.

\begin{pr}\label{pdmain}
For $M\in \obj \dmgmr$, $j,l,i\in \z$, $(M^*)$ that is a choice of a weight complex for $M$,  and a field extension $K/k$ let us define  $\cchw^{j,i}(M_K,R,l)$ (resp.  $\cchw^{j,i}(M_K,R)$)
as the $i$th homology of the complex $\dmgr({K\perf})(M^{-*},R\lan j \ra [-l])$ (resp. of $\dmgr({K\perf})(M^{-*},R\lan j \ra)$).

I. The following properties of these cohomology theories are valid.

\begin{enumerate} \item\label{icwc1} $\cchw^{j,i}(-_K,R,l)$  yields a cohomological functor on $\dmgmr$.

\item\label{icwc2} $\cchw^{j,i}(-_K)$  vanishes on $d_{\le n}\dmger\subset \dmgmr$ if $j-i>n$.
\end{enumerate}

II. Assume that $M$ is an object of $  d_{\le n}\dmger$ for some $n\ge 0$.

Then  the following conditions are equivalent.  
 
 \begin{enumerate}
 \item\label{icwc4}
$M$ is also an object of $d_{\le n-s}\dmger$ for some $s\in [1,n]$.

\item\label{icwc5}
 $\cchw^{j,i}(M_K,R)=\ns$ for all $i\in \z$, $j\in [n-s+1,n]$, and all function fields $K/k$.

\item\label{icwc6}
 $\cchw^{j+1,i}(M_K,R,1)=\ns$ for all $i\in \z$, $j\in [n-s+1,n]$, and all rational extensions $K/k$.

\item\label{icwc7}
 $\cchw^{j+r,i}(M_K,R,r)=\ns$ for all $i\in \z$, $j\in [n-s+1,n]$, $r\in \z$, and all 
field extensions  $K/k$.
\end{enumerate}

III. For $M$ as above and 
 an integer $q$ also the following statements are equivalent.

\begin{enumerate}
 \item\label{icwc8}
$M\in \dmger_{\wchow\le q}$. 

\item\label{icwc9}
 $\cchw^{j,i}(M_K)=\ns$ for all $i>q$, $j\in[1,n]$, and all function fields $K/k$.

\item\label{icwc10}
 $\cchw^{j+1,i}(M_K,R,1)=\ns$ for all $i>q$, $j\in[1,n]$, and all rational extensions $K/k$.

\item\label{icwc11}
 $\cchw^{j+r,i}(M_K,R,r)=\ns$ for all $i>q$, $j\in[1,n]$,  $r\in \z$, and all 
field extensions  $K/k$.

\end{enumerate}

IV. Now let $R=\q$. Then it suffices to verify any of the assertions in parts II and III of the proposition for a single universal domain $K$  containing $k$.
\end{pr}

\begin{proof}
We recall that the Poincar\'e dual of $d_{\le n}\dmger$ is $d_{\le n}\dmger\lan -n \ra$, 
and that the dual to  $\obj d_{\le n-s}\dmger$ can (also) be described as $$\obj d_{\le n}\dmger\lan s-n \ra \cap \obj d_{\le n}\dmger\lan -n \ra$$
 (see Proposition \ref{pcrulemma}(\ref{iсru3})). 
Along with the observations made prior to this proposition, this easily reduces our assertions to their duals that were proved in the previous section.  \end{proof}

\begin{rema}\label{rpdmain}
1. Certainly, one can dualize 
Theorem \ref{tstairs}, Propositions \ref{phomol} and \ref{pconj}, and 
 the results of \S\ref{shchw} in a similar way also. 

In  particular, it appears to be no problem to state and prove a vast "mixed motivic" generalization of Theorem 3.6 of \cite{gorchgul}.

2. Since Chow-weight cohomology yields a mighty tool for computing the dimension of an (effective) motif, 
it makes all the more sense to make the main 
 "arithmetical" observation of this subsection (that appears to be more interesting either if $R\neq \q$ or if we study motives over essentially finitely generated fields).   

3. One can  define dimensions of not necessarily effective motives as follows: for $m\in \z$ and $M\in \obj \dmgr$ we say that $M$ is of dimension at most $m$ if $M$ belongs to $\lan \mgr(P)\lan c\ra,\ P\in \spv,\ c\in \z, \dim P\le m-c\ra$. This definition is easily seen to be coherent with the formulations of this section; cf. Remark \ref{rmain}(\ref{igm}).
\end{rema}

Now let $M$ be an object of $ d_{\le n}\dmger$ (for some $n\ge 0$).
We recall the proof of Theorem \ref{tmain}(2). There we have checked whether $g:\wcho^{c-1}{}_{\le t}l^{c-1}(M){\to} l^{c-1}(M)$ is zero. By our assumption on $M$, we can assume that  $\wcho^{c-1}{}_{\le t} l^{c-1}(M)$ is 
of dimension at most $ d$ (in $\dm_{gm}^{R,c-1}$). Hence the corresponding application of Proposition \ref{pcwh}(5) reduces the verification of $g=0$ to the vanishing of the corresponding $\chw_{j}^{i}(M_{k(P)})$ for the dimension of $P_j$  not greater than $n-j$.

Thus we obtain the following statement. 

\begin{pr}\label{ptestfi}
Let $M$ be an object of $ d_{\le n}\dmger$ (for some $n\in \z$).
Then the following statements are valid.
 
 1. To verify 
  any 
	condition in Theorem \ref{tmain} (resp.  condition \ref{ir3} in the setting of Proposition \ref{phcwh}(2), resp.  condition 2 of Corollary \ref{cmothomol}) it suffices to 
   compute the corresponding 
   $\chw_{j}^{i}(M_K)$   (resp.  motivic homology groups over $K\perf$) 
     for $K$ running through function fields  of dimension at most $ d-j$ 
    (resp. for $K/k$ of dimension at most $ d$) only. 

2. 
In  Proposition \ref{phcwh}(2) it suffices to verify condition \ref{irrat} 
 for rational extensions $K/k$ of transcendence degree at most $ d-j+1$.

 3. 
 For $R=\q$, in the 
 assertion mentioned in part 1 of this proposition it suffices to take $K$ to be the algebraic closure of $k(t_1,\dots,t_{d-j})$ (resp. of $k(t_1,\dots,t_{d})$) instead. 
 
\end{pr}

\begin{rema}\label{rtestfi}
1. Thus, if $M$ does {\bf not} satisfy the 
(motivic) equivalence conditions of the statements mentioned in the previous proposition, there necessarily exists a function field $K/k$ of 
"small dimension" such that (at least) one of the corresponding Chow-weight homology (resp. motivic homology) groups does not vanish over $K$.

Note also that it is actually suffices to consider dimensions of fields over a field of definition for $M$ (that certainly may be smaller than $k$).

2. The question whether these dimension restrictions are the best possible ones seems to be quite difficult in general 
(especially if we consider geometric motives only). 
Note however that in the case $d=1$, $R=\q$, and a finite $k$ it is clearly not sufficient to compute Chow-weight homology over algebraic extensions of $k$ only.
\end{rema}

\subsection{On the relation 
 to effectivity and connectivity of motivic spectra} \label{sht} 

To demonstrate the actuality of 
 properties of motives studied  in the current paper we recall (from \cite{bacons} and \cite{binfeff}) that effectivity and connectivity (cf. Corollary \ref{cmothomol}(1)) of a 
 motif $M$ is closely related to the corresponding characteristics of its "preimage" in the motivic stable homotopy category (if there exists a compact preimage).

We need some preparation to formulate the results. To apply the results of \cite{binfeff} we have to assume that $R$ is a localization of $\z$. 

\begin{rema}\label{rcoeff}
1. Thus $R=\z[S\ob]$ where $S$ is a set of primes. For the convenience of the readers we note that in \cite{binfeff} the coefficient ring 
 $R$ was denoted by $\lam$.

2. Certainly, to combine the results of this subsection with the main results of this paper  
 (see Remark \ref{rstairs}(2)) one has to assume that $S$ contains $p$ (if $p>0$). Recall also that the assumption $p\in S$ 
 allows one 
 to simplify the proof of Theorem 2.3.1(i) of \cite{binfeff} that we will apply below; see \S3.2 of ibid. 
\end{rema} 

Now we recall some notation and statements from (\S1.3 and  \S2.2 of)  \cite{binfeff}. 
We will consider the naturally defined $R$-linear version  $\shtr$  of the stable homotopy category $\sht$ (that is closed with respect to small coproducts),\footnote{One may define it as the full subcategory of $\sht$ consisting of $R$-linear objects (these objects are also $S$-local, i.e., an object $M$ of $\sht$ is $R$-linear if and only if for any $X\in \obj \sht$ and $s\in S$ the multiplication by $s$ is an automorphism of $\sht(X,M)$); then the functor $\lrsh$ is just the left adjoint 
 to the embedding $\shtr\to \sht$. 
	} its subcategory $\shtrc$ of compact objects, the homotopy $t$-structure $\tshr$ on this category (that yields the corresponding connectivity filtration), and the slice (i.e., effectivity) filtration by the 
 subcategories  $\sher\lan i \ra=\sher \wedge T^{\wedge i}=\sher \{i\}$ for $i\in \z$ (where $\sher$ is the 
 full triangulated subcategory of effective objects).  
Moreover, we have a commutative square of natural connecting functors  
$$\begin{CD}
\sht @>{\mk}>>\dm\\
@VV{\lrsh}V@VV{\lrdm}V \\
\shtr@>{\mkr}>>\dmr
\end{CD}$$
all of those respect compact objects; 
here $\dmb$ and $\dmr$ are the "twist-stable" versions of $\dm^{eff}$ and $\dmerb$, respectively,\footnote{So, 
these categories are closed with respect to small coproducts, are endowed with the twist endofunctors $\lan 1\ra$ that are auto-equivalences, and contain $\dm^{eff}$ and $\dmerb$, respectively. Actually, it is not this necessary to consider the "big" categories for our purposes (cf. Remark \ref{rstairs}(6)); yet it seems appropriate to start with the categories $\sht$ and $\dm$ that are more well-known than the corresponding subcategories.} and the functors $\lrsh$ and $\lrdm$ are the corresponding analogues of the functor $-\otimes R'$ in Proposition \ref{plocoeff}.
We will also need the twist-stable version $\tdmr$ of the homotopy $t$-structure $\thomr$ 
 (being more precise, $\tdmr$ is stable with respect to the auto-equivalences $-\{i\}=-\lan i \ra[-i]$ of $\dmerb$; see Remark \ref{rhomr} above and Proposition 5.6 of \cite{degmod}).

Now let us recall the relation of $\mkr$ 
  to the effectivity and the connectivity filtrations. 

\begin{theo}\label{tsht}
Let $i\in \z$, $E\in \obj \shtrc$. 

1. $\mkr$ sends  $\sher\lan i \ra$ into  $\dmerb\lan i \ra$. Moreover, if 
  $\mkr(E)\in \obj \dmerb\lan i \ra$ then $E$ belongs to  $\obj\sher\lan i \ra$.

2.  $\mkr$ is right $t$-exact with respect to $\tshr$ and $\tdmr$; thus it maps $\shtr^{\tshr\le i}$ into $\dmr^{\tdmr \le i}$. Moreover, if $k$ is unorderable (i.e., if $-1$ is a sum of squares in $k$) and  $\mkr(E)\in \dmr^{\tdmr \le i}$ then $E\in \shtr^{\tshr\le i}$.
\end{theo}
\begin{proof}
The first parts of these assertions easily follow from the well-known properties of $\sht$ and $\mk$; they are given by   Proposition 2.2.3(1) of \cite{binfeff} (recall that in ibid. the so-called homological convention for $t$-structures is used).

The "moreover" part of assertion 1 follows from Theorem 3.1.1(I.1) of ibid. according to Remark 2.2.2(1) of ibid. 

The "moreover" part of assertion 2 is  the most difficult of these statements; it is given by Theorem 2.3.1(i) of  ibid. (that relies on Theorem 16 of \cite{bacons}).  
\end{proof}

\begin{rema}\label{rstairs}
1. These statements are obviously equivalent to their restrictions to the case $i=0$.

2. Combining our theorem with Theorem \ref{tmain}(1) (resp. with Corollary \ref{cmothomol}) 
 one  obtains an if and only if criterion for $E$ to belong to $\obj\sher\lan i \ra$ (resp. to $ \shtr^{\tshr\le i}$) in terms of Chow-weight homology of $\mkr(E)$. One only has to assume that $p$ is invertible  in $R$ if $p>0$ and that $k$  unorderable if $p=0$ in this $\shtr$-connectivity criterion. 

3.  The study of $\shtr$-effectivity of motivic spectra (for various $R$) appears to be an interesting problem; recall in particular that Proposition 2.3.4 of \cite{binfeff} generalizes Theorem 2.2.1 of \cite{asok}. Note also that the language used in ibid. to treat 
 motivic connectivity is closely related to the "standard decomposition of the diagonal" one; 
 thus 
 the Chow-weight homology criterion for $\shtr$-connectivity that we have just mentioned gives another generalization of loc. cit. ("modulo $p$-torsion" if $p>0$).

4. Part 2 of our theorem (along with its combination with the motivic connectivity criteria established earlier in the paper)  is really non-trivial. Note in particular that  the statement  fails if $k$ is formally real (cf. Remark 2.1.2(3) of  \cite{binfeff}).  

5. One can (also) dualize part 1 of  the  theorem if one assumes (as we usually do) that $p$ is invertible. 
 One should use the fact that the restriction of $\mkr$ to compact objects is a monoidal functor between rigid triangulated categories (see Theorem 2.4.8 of \cite{bondegl} that relies on Appendix B of \cite{lyz}). 

6. 
Note that the classes of compact objects in $\sher\lan i \ra$ and $\shtr^{\tshr\le i}$  admit certain descriptions: the first of them equals the 
 $\shtr$-envelope of $\sinft(X_+)\wedge T^{\wedge i}[j] \otimes_{\sht}R$ for $X\in \sv$ and $j\in \z$, and the second one is the   $\shtr$-envelope of $\sinft(X_+)\wedge T^{\wedge s}[j-s] \otimes_{\sht}R $ for $X\in \sv$, $s\in \z$, and $j\ge -i$.\footnote{We 
	will not introduce the corresponding (standard) notation here; yet to help the reader  we  recall that $\sinft(X_+)\wedge T^{\wedge i}$ is "the $\sht$-version of" $\mg(X)\lan i \ra$  
 (and it is mapped into $\mg(X)\lan i \ra$ by $\mk$).} 
 Hence one can obtain a certain analogue of condition D in Theorem \ref{tstairs}(3), and dualize part 2 of Theorem \ref{tsht} (if $p\in S$).

7. It appears that one can prove 
 the natural $\sher$-version of (the "More specifically" statement in) \cite[Theorem 15]{bacons} (version (a))  and combine it with the "continuity" property of $\obj SH_R(-)$  (cf. Proposition 2.2.3(5) of \cite{binfeff}) and with Theorem \ref{ttors}  above  to 
 obtain the following:  if $k$ is unorderable then for an object $E$ of $\shtrc$ its image in $SH_{\q}(k)$ belongs to   $SH_{\q}(k)^{t^{SH}_{\q}\le 0}$ if and only if  the object $E^{\tshr\ge 1}$ is torsion (see Definition \ref{dtors}; cf. Corollary \ref{chtors} for a certain motivic version of this statement). The authors do not know any other way to prove this statement, and  also don't know whether it is valid if $k$ is formally real.

8. The authors wonder for which (other) staircase sets the natural analogues of 
our results are valid (for an unorderable $k$).

\end{rema}

\subsection{Some more remarks; possible development}\label{smore}

We make some more remarks on our main results; some of them concern 
  torsion phenomena. 
 Possibly the matters mentioned below will be studied in consequent papers.

\begin{rema}\label{rmore}
\begin{enumerate}

\item\label{iblesnl}
It would certainly be interesting to relate the results of this paper to earlier statements on effectivity of cohomology (of singular varieties); cf. Theorem 1.2 of \cite{blesnl}.

\item\label{irmtl} 
Recall (see 
 Theorem 5.3.14 of \cite{kellyast}) that higher Chow groups of varieties can be computed using quite explicit complexes of algebraic cycles. This gives a hope to compute some of the groups  $\chowm_{j}(N,R,i)$ (for an arbitrary coefficient ring $R$) if 
  a motif $N\in \obj \dmger$ is "expressed in terms of"  $\mgcr(X_i)$ for some $X_i\in \var$ (cf. Remark \ref{iesn}(2)).
 Next one can apply Corollary \ref{cmothomol} to obtain some information on Chow-weight homology of $N$ (say, in the case $R=\q$) and to study various (co)homology of $N$ (see Propositions \ref{phomol} and \ref{pconj}). 


It also  may make sense to look at   $\chowm_{j}(M',\zop,i)$ for $M'=\mgr(X)$ instead (see Remark \ref{rmgq}(2)) even though these groups may fail to have "reasonable" descriptions in terms of homology of complexes of algebraic cycles.

\item\label{itorbound}
The main formulations of this paper are easier to apply when $R=\q$ (or $R$ is a $\q$-algebra). Now we describe some ideas related to motives and homology with integral and torsion coefficients.

Firstly we note that a bound on the dimension of a 
 motif clearly yields some information on its (co)homology. In particular, the $\zl$-\'etale homology $H$ of an object $M$ of $\chower$ of dimension at most $d$ is concentrated in degrees $[-2d,0]$ (here we take a prime $l\neq p$, a coefficient ring not containing $1/l$, and consider the \'etale homology over an algebraically closed field of definition; we apply our convention for enumerating homology). Moreover, considering the relation between $\zl$-homology and $\zlz$-one one obtains that $H_{-2d}(M)$ is torsion-free.

One can use these simple remarks for studying the $E_2$-terms of Chow-weight spectral sequences for $H$; cf. Proposition \ref{phomol}. 
In particular, the latter of them can be applied  for studying "comparing $M$ with $M\otimes \q$"; cf. \cite[Remark 3.11]{voibook}.
Note however that 
the groups $E_2^{**}T(H,M)$ cannot be recovered from the weight filtration on $H_*(M)$ in general; see  \cite[\S3.1.3]{gs} (cf. the proof of Proposition \ref{pgs}(2)).

These observations  demonstrate the actuality of bounding dimensions of motives (for our purposes). We will say more 
 on bounds of this sort in part \ref{idimbound} of this remark.

\item\label{ichows}
 In the current paper we treat Chow-weight homology 
 (of a fixed $M\in \obj \dmger$) as functors that associate to field extensions of $k$  certain $R$-modules.  Yet one can apply a "more structured" approach instead; it seems to be especially actual for $R\neq\q$.
 
 For any $U\in \sv$ 
 and $\wcr(M)=(M^*)$, $j,l\in \z$, one can consider the homology of the complex 
 $\dmgmr(\mgr(U)\lan j\ra[l],M^*)$. Next the functors obtained can be sheafified with respect to $U$; this yields a collection of certain {\it Chow-weight homology sheaves} (for any $(j,l)$). Moreover, if $j\ge 0$ then the sheafifications of $U\mapsto 
\dmgmr(\mgr(U)\lan j\ra[l],M^i)$ (that were called the {\it Chow sheaves} of $M^i$ in \cite{lekahn}) 
 are birational (in $U$, i.e., they convert open dense embeddings of smooth varieties  into isomorphisms; see Remark 2.3 of \cite{hubka}). 
Hence the corresponding Chow-weight homology sheaves are birational also. 

Moreover,  these observations can probably be extended to the setting of motives (with rational coefficients) over any "reasonable" base scheme $S$; one should study
 the corresponding {\it dimensional homotopy invariant} Chow sheaves for $S$-motives (recall that those are conjecturally Rost's cycle modules over $S$) and apply the results of \cite{bondegl}. 

\item\label{idimbound} Theorem \ref{tstairs}(5) demonstrates that it is possible to combine the effectivity restrictions on (terms of weight complexes of) motives with dimension bounds. However, one may consider a bound on the dimensions of $M^i$ that depends on $i$ 
 (cf. Remark \ref{rpdmain}(1)). 
It appears to be possible to combine bounds of this sort with effectivity ones; for this purpose one may combine the localization method applied in \cite{binters} with the results of \cite{bkillw} (that allow treating terms of weight complexes "separately") and with 
  \cite[Proposition 4.2.1]{bsosnl}. 


\item\label{icoex}
$\chower$-complexes of length $1$ yield a simple counterexample 
 to the natural analogue of Theorem \ref{tmain}(3) for motives whose Chow-weight homology vanishes in degrees {\bf less} than $n$ (along with the corresponding analogues of  Theorem \ref{tmain}(2) and Theorem \ref{tstairs}(3)). Assume $R=\q$, $k\subset K=\com$ (actually, any $K$ that is not an algebraic extension of a finite field is fine for our purposes); take a smooth projective $P/k$ (say, an elliptic curve) that possesses a $0$-cycle $c_0$ 
 of degree $0$ that is 
rationally non-torsion. We  also use the notation $c_0$ for the corresponding morphism $\q=M_{gm}^{\q}(\pt)\to M_{gm}^{\q}(P)$; let $M$ be the cone of $c_0$ (i.e., 
 $M=\dots 0\to \q\stackrel{c_0}{\to} M_{gm}^{\q}(P)\to 0\to \dots$; $M_{gm}^{\q}(P)$ is in degree $0$).
 

Since $c_0$ is rationally non-trivial (as a cycle with $\q$-coefficients),  
$\chowm_0(c_0,\q)$ is an injection  
(and $\chowm_j(c_{0,K},\q)$ is also injective for any $j\ge 0$ and $K/k$).
Hence $\chw^i_j(M_K,\q) =\ns$ whenever $i\neq 0$ (and any field extension $K/k$).
 On the other hand, 
$c_0$ does not split since 
 it is numerically trivial as a cycle. Thus $M$ does not belong to $K^b(\choweq)_{\wchow\le 0}$ (or to $\dmgeq{}_{\wchow\le 0}$  if we 
 "put it into" $\dmgeq$). Hence the vanishing of the 
Chow-weight homology  in negative degrees does not imply that the weights of a 
 motif $M$ are non-negative.

Moreover, one can consider the tensor product of two examples of this type. If the corresponding $P_i$ ($i=1,2$) are (smooth projective) curves of positive genus 
then one can easily check that $\chw^i_j(M_1\otimes M_{2,K},\q) =\ns$ whenever $i\neq 0$ (for any $j\ge 0$ and any field extension $K/k$). On the other hand, $M=M_1\otimes M_2$ does not even belong to $K^b(\choweq)_{\wchow\le 1}$ (easy; look at the 
  Deligne's weights of the \'etale cohomology of $M$) if we consider 
$M$ as an object of $K^b(\choweq)$. Considering $M$ as an object of 
$\dmgeq$ yields the corresponding example in the latter category. 

Furthermore,   triple tensor products of $M_i$ of this type possibly yield similar examples with $M\notin \dmgeq{}_{\wchow\le 2}$.

Thus Chow-weight homology cannot be used for bounding weights from above. On the other hand,  the argument used in the proof of Proposition \ref{pconj} can easily be modified to prove that the weight filtration on singular homology does yield bounds of this sort (if one assumes conjectures A and B in the proposition); the corresponding version of Proposition \ref{phomol} is valid as well. 

\item\label{icobord} Our arguments are rather formal and mostly rely on the existence of compatible Chow weight structures for the motivic categories we consider. Thus 
our results can probably be extended to certain categories of {\it effective geometric cobordism} motives (i.e., to the corresponding 
 subcategory of the triangulated category of $MGl$-module spectra) at least if $p=0$; 
 cf. 
 Example 1.3.1(3)  of \cite{bondegl} and Proposition 5.3.6 of \cite{bgn}.

\end{enumerate}
 \end{rema}

\end{document}